\def\thispapertitle {One helpful property of functions generating~P\'olya~frequency~sequences}
\definecolor{mybrown}{HTML}{D02000}
\let\vvarkappa\varkappa
\let\varkappa\vvarkappa
\newcommand{\navy}{}%\color{Navy}}
\newcommand{\green}{\color[HTML]{509000}}
\newcommand{\gree}{}%\color[HTML]{006500}}
\newcommand{\Arg}{\operatorname{Arg}}
\newcommand{\sign}{\operatorname{sign}}
\renewcommand{\le}{\leqslant}
\renewcommand{\ge}{\geqslant}
\newcommand{\ww}{\quad\text{where}\quad}
\newcommand{\an}{\quad\text{and}\quad}
\renewcommand{\Re}{\operatorname{Re}}
\renewcommand{\Im}{\operatorname{Im}}
\newtheorem{conjecture}{\usekomafont{subparagraph}Conjecture}
\newtheorem{theorem}{\usekomafont{subparagraph}Theorem}
\newtheorem{lemma}[theorem]{\usekomafont{subparagraph}Lemma}
\newtheorem{corollary}[theorem]{\usekomafont{subparagraph}Corollary}
\theoremstyle{definition}
\newtheorem{remark}[theorem]{\usekomafont{subparagraph}Remark}
\newtheorem*{definition}{\usekomafont{subparagraph}Definition}
\numberwithin{paragraph}{section}
\newcommand{\dist}{\operatorname{dist}}
\newcommand{\myem}{\em}%{\fontfamily{PTSerifCaption-TLF}\fontshape{it}\selectfont}
\title{\thispapertitle\!%
    \thanks{This work was financially supported by the European Research Council under the European
        Union's Seventh Framework Programme (FP7/2007--2013)/ERC grant agreement no.~259173.}
}
\author{\gree\fontfamily{PTSansCaption-TLF}\selectfont\normalsize\vspace{1em}Alexander Dyachenko}
\date{\gree\fontfamily{PTSansCaption-TLF}\selectfont\vspace{-1.4em}{\small 24th June 2015}\vskip -2.5em}
\renewcommand{\tocbasic@@before@hook}{\vspace{-1.5em}\parskip0pt\itemsep0pt\setstretch{1}}
\newcommand{\Cp}{\ensuremath{\mathbb{C}_+}}
\newcommand{\Cd}{\ensuremath{\mathbb{C}_\delta}}
\newcommand{\Rc}{\ensuremath{\mathcal{R}}}
\begin{document}
\automark{section}
\maketitle
\begin{abstract}
    In this work we study the solutions of the equation~$z^pR(z^k)=\alpha$ with nonzero
    complex~$\alpha$, integer~$p,k$ and~$R(z)$ generating a (possibly doubly infinite) totally
    positive sequence. It is shown that the zeros of~$z^pR(z^k)-\alpha$ are simple (or at most
    double in the case~$\Im\alpha^k=0$) and split evenly among the sectors
    $\{\frac jk \pi\le\Arg z\le\frac {j+1}k \pi\}$, \ $j=0,\dots, 2k-1$. Our approach rests on the
    fact that $z(\ln z^{p/k}R(z) )'$ is an \Rc-function (\textit{i.e.} maps the upper half of
    the complex plane into itself).

    This result guarantees the same localization to zeros of entire
    functions~$f(z^k)+z^p g(z^k)$ and~$g(z^k)+z^{p}f(z^k)$ provided that~$f(z)$ and~$g(-z)$ have
    genus~$0$ and only negative zeros. As an application, we deduce that functions of the form
    $\sum_{n=0}^\infty (\pm i)^{n(n-1)/2}a_n z^{n}$ have simple zeros distinct in absolute value
    under a certain condition on the coefficients~$a_n\ge 0$. This includes the ``disturbed
    exponential'' function corresponding to $a_n= q^{n(n-1)/2}/n!$ when~$0<q\le 1$, as well as
    the partial theta function corresponding to $a_n= q^{n(n-1)/2}$ when
    $0<q\le q_*\approx 0.7457224107$.
\end{abstract}
\begin{center}
    {\setstretch{1}
        \begin{tabular}{@{}l|cr}
            \begin{minipage}{.305\textwidth}
                \vspace{.2em}
                %{\usekomafont{section}}
                \fontsize{8pt}{0em}\fontfamily{PTSans-TLF}\selectfont%PTSansNarrow-TLF
                Keywords:
                \begin{itemize}\itemindent0pt\itemsep1pt
                \item Localization of $\alpha$-points
                \item Localization of zeros
                \item $\mathcal R$-functions
                \item $n$th root transform
                \item Totally positive sequences
                \item P\'olya frequency sequences
                \item Partial theta function
                \end{itemize}
                % \vspace{1.85em}
                \vspace{0.5em}
                %\usekomafont{section}
                Mathematics Subject Classification (2010):
                % \vspace{.2em}\centering
                \begin{itemize}\itemindent0pt\itemsep1pt
                \item 30C15
                \item 30D15
                \item 30D05
                \end{itemize}
                \vspace{0.2em}
            \end{minipage}
          &
            \makebox[.01\textwidth]{}
          &
            \begin{minipage}{.62\textwidth}
                {\itemsep0pt\setstretch{0}
                    \tableofcontents
                    % \vspace{0.85em}
                }
            \end{minipage}
    \end{tabular}
    }
\end{center}
\section{Introduction}\label{sec:introduction}
The present paper studies quite a general equation of the form~$z^p R(z^k)$, however the simple
case~$k=2$ considered in Sections~\ref{sec:concl_for_k_2} and~\ref{sec:two-problems-sokal} has
the most interesting applications. Corollary~\ref{cr:ca2} introduces sufficient conditions on a
function of the form~$\sum_{n=0}^\infty i^{\pm \frac{n(n-1)}{2}}f_nz^n$, where~$f_0\ne 0$
and~$f_n\ge 0$ for all~$n$, which assure the simplicity of its zeros. It turns out that such
functions as
\[
\begin{aligned}
    \mathcal F(z;\pm iq) &= \sum\limits_{n=0}^\infty \frac 1{n!} (\pm iq)^{\frac{n(n-1)}2}z^{n},
    &&\text{where}\quad
    0<q\le 1,\quad\text{and}\\
    \Theta_0(z;\pm iq) &= \sum\limits_{n=0}^\infty (\pm iq)^{\frac{n(n-1)}2}z^{n},
    &&\text{where}\quad
    0<q\le q_*\approx 0.7457224107,
\end{aligned}
\]
have zeros which are simple and distinct in absolute value. The former
function~$\mathcal F(z;q)$ gives a solution to the functional-differential problem
\setstretch{1.05}
\[
\mathcal F'(z)=\mathcal F(qz),\quad \mathcal F(0)=1,
\]
while the latter is the partial theta function satisfying
\[
\Theta_0(z;q)=1+z\hspace{.7pt}\Theta_0(zq;q).
\]
The partial theta function participates in a number of beautiful Ramanujan-type relations
(\cite[Chapter~6]{AndrewsBerndt}, \cite{Warnaar}), it is related to $q$-series and some types of
modular forms. Both~$\mathcal F$ and~$\Theta_0$ appear in problems of statistics and
combinatorics (see \emph{e.g.}~\cite{Sokal,Sokal_theta}) and their zeros are the subjects of
conjectures by Alan Sokal. The details can be found in Section~\ref{sec:two-problems-sokal}.

Nevertheless, general statements offer a better insight into the problem, give an opportunity to
determine factors on which the result depends and to find possible generalizations. Their main
drawback is an excessive amount of specific cases in Sections~\ref{sec:main-theorems_PF},
\ref{sec:main-theorems} and~\ref{sec:locat-alpha-points}. To give a survey of our results, we
briefly introduce two special classes of functions and definitions of $\alpha$-sets and
$\alpha$-points.

\paragraph*{Definitions.\hspace{-.5em}}
A doubly infinite sequence $\left(\rho_n\right)_{n=-\infty}^\infty$ is called
{\myem totally positive} if all of the minors of the (four-way infinite) Toeplitz matrix
\(\left(\rho_{n-j}\right)_{n,j=-\infty}^\infty\) are nonnegative (\textit{i.e.} the matrix is
{\myem totally nonnegative}). The paper~\cite{Edrei} answers the question of convergence of the
correspondent power series~$\sum_{n=-\infty}^\infty \rho_nz^n$. Unless we
have~$\rho_n=\rho_0^{1-n}\rho_1^{n}$ for every~$n$, this series converges in some annulus to a
function of the following form
\begin{equation}\label{eq:funct_gen_dtps}
    Cz^pe^{Az+\frac{A_0}z}\cdot
    \frac{\prod_{\nu>0} \left(1+\frac{z}{a_\nu}\right)}
    {\prod_{\mu>0} \left(1-\frac{z}{b_\mu}\right)}\cdot
    \frac{\prod_{\nu>0} \left(1+\frac{z^{-1}}{c_\nu}\right)}
    {\prod_{\mu>0} \left(1-\frac{z^{-1}}{d_\mu}\right)}
\end{equation}
with absolutely convergent products, integer~\(p\) and coefficients satisfying \(A,A_0,C\ge 0\),
\(a_\nu,b_\mu,c_\nu,d_\mu>0\) for all~\(\nu,\mu\). The converse is also true: every function
with the representation~\eqref{eq:funct_gen_dtps} {\myem generates} (\textit{i.e.} its Laurent
coefficients form) a {\myem doubly infinite totally positive sequence}.

In the case of~$\cdots=\rho_{j-2}=\rho_{j-1}=0\ne\rho_j$, we assume the sequence to be
terminating on the left of~$\rho_j$ and call it {\myem totally positive}. A totally positive
sequence can be infinite when it contains no zeros to the right of~$\rho_j$ or finite otherwise.
These sequences were studied earlier than doubly infinite ones in~\cite{AESW}. They are
generated by functions of the form~\eqref{eq:funct_gen_dtps}, where the products in the last
fraction are empty and~$A_0=0$. Note that the term {\myem P\'olya frequency sequence} is often
used as a synonym for {\myem totally positive sequence}.

Herein, it is convenient to use the notion of $\alpha$-point. Given a complex number~$\alpha$,
the {\myem $\alpha$-set} of a function~$f(z)$ is the set~$\{z\in\mathbb{C}:f(z)=\alpha\}$ and
points of this set are called {\myem $\alpha$-points}. A non-constant meromorphic function can
clearly only have isolated~$\alpha$-points. We say that an $\alpha$-point~$z_*$ of a
function~$f$ {\myem has multiplicity~$n\in\mathbb{N}$} whenever
$f'(z_*)=\cdots=f^{(n-1)}(z_*)=0\ne f^{(n)}(z_*)$. The $\alpha$-point is {\myem simple} if its
multiplicity equals one.

The present work aims at describing the behaviour of $\alpha$-points of functions which can be
represented as~$z^pR(z^k)$, where~$p$ is an integer,~$k$ is a positive integer and~$R(z)$ is not
constant and generates a (possibly doubly infinite) totally positive sequence.%
\footnote{Functions of this form are the $k$th root transforms of~$z^pR^k(z)$. In the particular
    case when~$R(z)$ and~$R'(z)$ are holomorphic and nonzero at~$z=0$, the function $zR^k(z)$ is
    univalent in some disk centred at the origin. Then, $zR(z^{k})$ will be a univalent
    \emph{function with $k$-fold symmetry} in this disk in the sense that the image
    of~$zR(z^{k})$ will be $k$-fold rotationally symmetric (see e.g.~\cite[\S~2.1]{Duren} for
    the details). The term ``functions with $k$-fold symmetry'' could be good under the posed
    narrower conditions, however we study a more general case assuming no such regularity at the
    origin and allowing any integer~$p$ satisfying~$\gcd(|p|,k)=1$.} %
We confine ourselves to the case when~$\gcd(|p|,k)=1$: other cases can be treated by introducing
the variable~$\eta\colonequals z^{\gcd(|p|,k)}$. As a main tool, we use a relation of such
functions to the so-called {\myem \Rc-functions} (also known as the {\myem Pick} or
{\myem Nevanlinna} functions). By definition, \Rc-functions are the {\myem real} (\textit{i.e.}
real at every real point of continuity) functions mapping the upper half of the complex plane
into itself.

\paragraph*{Results.\hspace{-.5em}}
Our first goal is to describe the $\alpha$-set of the expression~$z^BR(z)$ in the upper half of
the complex plane~\(\Cp\), where~$R(z)$ is as above,~$B$ is real
and~$\alpha\in\mathbb{C}\setminus\{0\}$. Theorem~\ref{th:W_alpha_pts} states that if the
equation $z^BR(z)=\alpha$ has solutions in~\(\Cp\), then the \(\alpha\)-points are simple and
distinct in absolute value. The \(\alpha\)-points on the real line
% what about the origin????
\setstretch{1.2} (excepting the origin) may be either simple or double. Furthermore, for real
constants~$a$ and~$b_1\ne b_2$ the machinery developed in Lemma~\ref{lemma:simp_z} implies that
solutions to~$z^BR(z)=ae^{ib_1}$ and to~$z^BR(z)=ae^{i b_2}$ alternate when ordered in absolute
value (under the additional condition that none of them fall onto the real line --- see
Theorem~\ref{th:W_alpha_pts_order} for the details). The corresponding properties of
$\alpha$-points in the whole complex plane are described in Theorem~\ref{th:main_B} and
Remark~\ref{rem:main_B2}.

Our approach is based on Lemma~\ref{lemma:simp_z}: a function~$\psi(z)$ is univalent in the
upper half of the complex plane provided that~$z\psi'(z)$ is an \Rc-function. In fact, this
Lemma is an ``appropriate'' reformulation of classical results, however we need a construction
from its proof. Then we study the properties of~$\psi(z)$ on the real line under the additional
assumption that~$\psi(z)$ is meromorphic in~$\overline\Cp\setminus\{0\}$
(Section~\ref{sec:case-meromorphic-phi}). This assumption can be relaxed: poles can have
condensation points on the real line. However, we are interested in the narrower
case~$\psi(z)\colonequals\ln z^{B}R(z)$ with~$R(z)$ of the form~\eqref{eq:funct_gen_dtps}, which
appears in Theorem~\ref{th:W_alpha_pts} and Theorem~\ref{th:W_alpha_pts_order}. The reason is
that the function~$R(z)$ can be represented as in~\eqref{eq:funct_gen_dtps} if and only if the
sequence of its Laurent coefficients has the above property of total positivity.

The second goal of the present work is to study $\alpha$-points of~$z^pR(z^k)$, which is done by
tracking the solutions to $z^{p/k}R(z)=\alpha\cdot\exp\big(i\frac{2\pi n}{k}\big)$
and~$z^{p/k}R(z)=\overline\alpha\cdot\exp\big(i\frac{2\pi n}{k}\big)$, \ $n\in\mathbb{Z}$, under
the change of variable~$z\mapsto z^k$. This result is presented by Theorems~\ref{th:main},
\ref{th:main2},~\ref{th:meromorphic_pos_p} and~\ref{th:meromorphic_neg_p}. If we split the
complex plane into~$2k$ sectors~$Q_j= \{\frac nk \pi<\Arg z<\frac {n+1}k \pi\}$, \
$j=0,\dots, 2k-1$, then Theorem~\ref{th:main} states that for~$\Im\alpha^k\ne 0$ all
$\alpha$-points are inner points of the sectors, simple, and those in distinct sectors strictly
interlace with respect to their absolute value. Put in other words, if~$\alpha$-points
of~$z^pR(z^k)$ are denoted by~$z_i$ so that~$\cdots\le |z_{-1}|\le |z_0|\le |z_1|\le\cdots$,
then $\cdots< |z_{-1}|< |z_0|< |z_1|<\cdots$ and~$z_i\in Q_n$ implies that
$z_{i+1}, \dots, z_{i+2k-1}\notin Q_n$ and (as soon as~$R(z_{i+2k})=\alpha$) that~$z_{i+2k}\in Q_n$.
In fact, there is a formula for~$m$ such that $z_{i+1}\in Q_m$, which is
% expressed as a congruence modulo~$k$
trivial for~$p=\pm 1$ or~$k=2$. Theorem~\ref{th:main2} provides analogous properties in the
case~$\Im\alpha^k= 0$. In particular, it asserts that there are at most two $\alpha$-points
sharing the same absolute value, which are simple unless they occur at a sector boundary where
they may collapse into a double~$\alpha$-point.

In turn, Theorem~\ref{th:meromorphic_pos_p} and Theorem~\ref{th:meromorphic_neg_p} answer the
question which sector contains the $\alpha$-point that is minimal in absolute value for a
meromorphic function~$R(z)$. This automatically extends to the $\alpha$-point that is the
maximal in absolute value when~$R\left(\frac 1z\right)$ is meromorphic.

Theorems~\ref{th:main} and~\ref{th:main2}--\ref{th:meromorphic_neg_p} describe
zeros of entire functions of the form
\begin{equation}\label{eq:ent_funct_form}
    f(z^k) + z^{j} g(z^k)
    \quad\text{or}\quad
    g(z^k) + z^{j} f(z^k),
    \quad
    j,k\in\mathbb{N},
\end{equation}
where (complex) entire functions~$f(z)$ and~$g(-z)$ are of genus~$0$ and have only negative
zeros. Since $f(z^k)/f(0)$ and $g(z^k)/g(0)$ become real functions, the correspondence is
provided by
\begin{equation}\label{eq:ent_funct_corr_merom}
    f(z^k) + z^{-p} g(z^k) = 0
    \iff z^{-p}\cdot\left(g(z^k) + z^{p} f(z^k)\right) = 0
    \iff z^{p}\frac{f(z^k)/f(0)}{g(z^k)/g(0)}  = -\frac{g(0)}{f(0)}
\end{equation}
on setting~$p\colonequals\pm j$. We can allow~$f(z)$ and~$g(-z)$ to be any
\emph{functions generating totally positive sequences} up to constant complex factors. Then the
functions of the form~\eqref{eq:ent_funct_form} can be identified by the condition on their
Maclaurin or Laurent coefficients. See Section~\ref{sec:entire-functions} for further details.

Our third goal is attained in the two last sections. It consists in applying the above results
in the setting~$k=2$, which is summarized in Theorems~\ref{th:ca1}--\ref{th:ca2}. For a
(complex) entire function $H$ of the complex variable $z$ consider its decomposition into odd
and even parts such that \(H(z)=f(z^2) + z g(z^2)\). In other words, Theorem~\ref{th:ca1}
from Section~\ref{sec:concl_for_k_2} answers the following question: how are the zeros of the
function $H(z)$ distributed if the ratio~$\frac{f(z)}{g(z)}$ has only negative zeros and
positive poles? The case when the ratio~$\frac{f(z)}{g(z)}$ has only negative poles and positive
zeros is treated by Theorem~\ref{th:ca2}. The question appears to be connected to the
Hermite-Biehler theorem. This is a well-known fact asserting that if the function~$H(z)$ is a
real polynomial, then its stability%
\footnote{The polynomial is called (Hurwitz) {\myem stable} if all of its roots have negative
    real parts.} %
is equivalent to that~$f(z)$ and $g(z)$ only have simple negative interlacing%
\footnote{The zeros of two functions are called {\myem interlacing} if in between two
    consecutive zeros of the first function there is a zero of the second function and
    \emph{vice versa.}} %
zeros and~$f(0)\cdot g(0)>0$. This correspondence (expressed as conditions on the Hurwitz
matrix) is at the heart of the Routh-Hurwitz theory (see,
\emph{e.g.}~\cite[Ch.~XV]{Gantmakher},~\cite{Tyaglov,BarkovskyTyaglov,ChebMei}). With a proper
extension of the notion of stability, this criterion extends to entire (see~\cite{ChebMei}),
rational (see~\cite{BarkovskyTyaglov}) and further towards meromorphic functions. Furthermore,
if~$H(z)$ is a polynomial and we additionally allow the ratio~$\frac{f(z)}{g(z)}$ to have
positive zeros and poles, then we will obtain the ``generalized Hurwitz'' polynomials as
introduced in~\cite{Tyaglov}. In the same paper~\cite[Subsection~4.6]{Tyaglov}, its author
describes ``strange'' polynomials (related to stable polynomials) with interesting behaviour.
Item~\eqref{distr2} of our Theorem~\ref{th:ca1} explains the nature of their ``strangeness''.

\section{Connection between \texorpdfstring{$\Rc$}{R}-functions and univalent functions}
\label{sec:basic-facts-from}
Let us use the notation~``$\arg$'' for the multivalued argument function and~``$\Arg$'' for the
principal branch of argument, $-\pi<\Arg z\le\pi$ for any~$z$. We are starting from the
following short but useful observation.%
\footnote{There are many akin facts well known. For example, considering functions
    $\Phi(\zeta)\colonequals \phi(e^{-\zeta})$ gives the problem from~\cite{Wolff} but in a
    strip. %Close constructions underlay the theory of {\myem starlike functions}.
    However, we place it here since we need the relation between~$|z_1|$ and~$|z_2|$
    satisfying~\eqref{eq:lm1_cond} rather than the univalence itself.}

\begin{lemma}\label{lemma:simp_z}
    Let \(\phi\) be a function holomorphic in \(\Cp\colonequals\{z\in\mathbb{C}:\Im z>0\}\) with
    values in \(\Cp\) and let $\psi$ be a fixed holomorphic branch of
    \(\int\frac{\phi(z)}z\,dz\). Then the function~$\psi$ is univalent in~\(\Cp\). Moreover, if
    for some \(z_1,z_2\in\Cp\) we have
    \begin{equation}\label{eq:lm1_cond}
        \Re \psi(z_1) = \Re \psi(z_2)\equalscolon a
        \quad\text{and}\quad
        \Im \psi(z_1) \equalscolon b  < \widetilde b \colonequals  \Im \psi(z_2),
    \end{equation}
    then \(|z_1|<|z_2|\).
\end{lemma}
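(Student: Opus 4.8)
The plan is to push everything through the logarithmic substitution $z=e^{w}$, which turns the multiplicative structure of $\psi'(z)=\phi(z)/z$ into an additive one. The map $w\mapsto e^{w}$ is a biholomorphism from the horizontal strip $S\colonequals\{w:0<\Im w<\pi\}$ onto $\Cp$, and $\Re w=\ln|z|$ under it. First I would set $\Psi(w)\colonequals\psi(e^{w})$; differentiating and using $\psi'(z)=\phi(z)/z$ gives $\Psi'(w)=e^{w}\psi'(e^{w})=\phi(e^{w})$, so $\Psi'$ maps the convex strip $S$ into $\Cp$. In particular, writing $G\colonequals-i\Psi$ we obtain $\Re G'(w)=\Im\phi(e^{w})>0$ throughout $S$ --- note that $\phi$ cannot be a real constant, since its values lie in the \emph{open} half-plane $\Cp$, so by the open mapping theorem $\Im\phi$ is strictly positive.

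The second step is the classical Noshiro--Warschawski argument on the convex domain $S$: for distinct $w_{1},w_{2}\in S$,
\[
    G(w_{2})-G(w_{1})=(w_{2}-w_{1})\int_{0}^{1}G'\bigl(w_{1}+t(w_{2}-w_{1})\bigr)\,dt ,
\]
and the integral on the right has strictly positive real part, being an average of values of $G'$. Hence $G$ --- and therefore $\Psi$, and therefore $\psi=\Psi\circ\Ln$ --- is univalent in $\Cp$; moreover the difference quotient $\bigl(G(w_{2})-G(w_{1})\bigr)/(w_{2}-w_{1})$ has positive real part whenever $w_{1}\neq w_{2}$. It is this quantitative refinement, rather than mere univalence, that the second assertion of the lemma requires.

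Finally, for the comparison of moduli I would set $w_{j}\colonequals\Ln z_{j}$, so that $\Re w_{j}=\ln|z_{j}|$. Condition~\eqref{eq:lm1_cond} reads $\Psi(w_{1})=a+ib$ and $\Psi(w_{2})=a+i\widetilde b$ with $b<\widetilde b$, whence $G(w_{j})=-i\Psi(w_{j})$ satisfies $G(w_{1})=b-ia$, $G(w_{2})=\widetilde b-ia$, and therefore $G(w_{2})-G(w_{1})=\widetilde b-b>0$. In particular $z_{1}\neq z_{2}$, so $w_{1}\neq w_{2}$, and the difference quotient above applies: since $(\widetilde b-b)/(w_{2}-w_{1})$ has positive real part while the numerator $\widetilde b-b$ is a positive real number, we get $\Re\dfrac{1}{w_{2}-w_{1}}>0$, that is $\Re(w_{2}-w_{1})>0$, that is $\ln|z_{2}|>\ln|z_{1}|$, i.e. $|z_{1}|<|z_{2}|$.

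The argument is essentially routine; the only points needing care are bookkeeping ones --- confirming that the substitution lands in the \emph{convex} strip $S$ so that Noshiro--Warschawski applies, verifying that $\Im\phi$ is strictly positive to rule out the degenerate case, and being sure to extract from the argument the positivity of the difference quotient (not merely univalence), since univalence alone would not determine the ordering of $|z_{1}|$ and $|z_{2}|$.
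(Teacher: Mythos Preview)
Your proof is correct and considerably slicker than the paper's. The paper works directly in polar coordinates on~$\Cp$: from $z\psi'(z)=\phi(z)$ it derives $r\,\partial_r\Im\psi=\Im\phi>0$ and $\partial_\theta\Re\psi=-\Im\phi<0$, then analyses the level set $\{\Re\psi=a\}$ as a union of analytic arcs, each a graph $\theta=\theta(r)$, and shows $\Im\psi$ increases along and between consecutive arcs. Your logarithmic substitution $z=e^w$ collapses all of this to a single application of Noshiro--Warschawski on the convex strip, and you correctly keep the quantitative conclusion (positivity of the difference quotient of $G=-i\Psi$) rather than mere univalence, which is exactly what is needed to deduce $\Re(w_2-w_1)>0$ and hence $|z_1|<|z_2|$. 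One cosmetic point: your appeal to the open mapping theorem is unnecessary, since the hypothesis already places the values of~$\phi$ in the \emph{open} half-plane~$\Cp$, so $\Im\phi>0$ directly. The paper itself acknowledges (in a footnote) that the strip reduction is available, citing Wolff; it chooses the longer level-set argument because the arc decomposition $\Gamma=\bigcup_i\gamma_i$ and the monotonicity along and between arcs are reused verbatim in later lemmas (notably Lemma~\ref{lemma:all_vals}), where the behaviour of~$\psi$ on the boundary~$\partial\Cp$ and the passage from one arc to the next via a real segment matter. For the lemma as stated, your argument is the cleaner one.
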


\begin{proof}
    First let us approximate the upper half-plane \Cp\ by the set
    \[
      \Cd \colonequals\big\{z\in \mathbb{C}:\ \delta<\Arg z<\pi-\delta,\ |z|>\delta\big\},
      \quad \delta>0.
    \]    
    For $z=re^{i\theta}$ we have
    \(\displaystyle
        \frac{\partial z}{\partial r} = \frac {z}{|z|}
        \)%\quad
        and
        \(\displaystyle
        \frac{\partial z}{\partial \theta} = iz
    \), so    
    \[%begin{equation*}
        r \frac{\partial}{\partial r} \Im \psi(z)
        = \Im \left(\frac{z r}{r}\psi'(z)\right) = \Im \phi(z)
        = - \frac{\partial}{\partial \theta} \Re \psi(z),
    \]%end{equation*}
    which is the matter of the Cauchy-Riemann equation. The lemma's hypothesis~$\Im \phi(z)>0$
    for $z\in\overline\Cd$ yields that
    \begin{align}\label{eq:Im_inc}
        \frac{\partial}{\partial r} \Im \psi(z) &>0
        \quad\text{and}\\ \label{eq:Re_dec}
        \frac{\partial}{\partial \theta} \Re \psi(z) &<0.
    \end{align}
    The latter inequality implies that for each $r>0$ there can be at most one value of
    $\theta\in[\delta,\pi-\delta]$ such that $\Re\psi(re^{i\theta})=a$. Moreover, the set
    $\Gamma_\delta\colonequals\big\{re^{i\theta}\in\Cd:\Re\psi(re^{i\theta})=a\big\}$ only
    consists of analytic arcs because $\Re\psi$ is a function harmonic in~$\overline{\Cd}$. In
    other words, we obtained the following.
    \begin{compactenum}[\upshape(a)]
    \item\label{item:lm1a} For every $r>0$ there is at most one point $z\in\Gamma_\delta$
        satisfying $|z|=r$. That is, the arc~$\gamma_i$ in polar coordinates~$(r,\theta)$ can be
        set by a function~$\theta(r)$.
    \end{compactenum}
    \textls[10]{Denote by $\gamma_1$, $\gamma_2$, \dots\ the connected components
        of~$\Gamma_\delta$ according to their distance to the origin, so that}
    $\dist(0,\gamma_1) \le \dist(0,\gamma_2) \le \cdots$. To count the arcs in this manner is
    possible due to the regularity of the function~$\Re\psi$ in a neighbourhood
    of~$\overline{\Cd}$ (so every bounded subdomain of~$\Cd$ contains only a finite number of
    the arcs%
    \footnote{ If it is not, then the ray $\{re^{i\theta}:r>0\}$ for appropriate
        fixed~$\theta\in\left[\delta,\pi-\delta\right]$ meets~$\overline{\Gamma_\delta}$ in an
        infinite number of points of the subdomain (it follows from~\eqref{item:lm1a}). The
        function $\Re\psi(re^{i\theta})$ is analytic in~$r>0$ (as a function of two
        variables~$\theta$ and~$r$ with $\theta$ fixed). Consequently, $\Re\psi(re^{i\theta})$
        must be constant on that ray, because it attains the same value on a point sequence
        converging to an internal point of its domain of analyticity. So, we have a
        contradiction unless~$\Gamma_\delta=\{re^{i\theta}:r>0\}$.}%
    ). It is enough to justify two additional statements, which together with~\eqref{item:lm1a}
    imply the lemma.
    \begin{compactenum}[\upshape(a)]\addtocounter{enumi}{1}
    \item\label{item:lm1b}
        On each arc $\gamma_i$, \ $i=1,2,\dots$, the value of $\Im\psi$ increases (strictly) for
        increasing~$|z|$.
    \item\label{item:lm1c}
        If we pass from $\gamma_i$ to $\gamma_{i+1}$ (due to~\eqref{item:lm1a} it corresponds to
        the grow of~$|z|$), then $\Im\psi$ cannot decrease. (In fact, we will show that these
        arcs can be connected by a line segment of $\partial\Cd$ where $\Im\psi$ increases.)
    \end{compactenum}
    To wit, the assertions~\eqref{item:lm1a}--\eqref{item:lm1c} provide that any distinct points
    of~\Cd\ giving the same $\Re\psi$ give distinct $\Im\psi$ such that the
    conditions~\eqref{eq:lm1_cond} imply~$|z_1|<|z_2|$. In particular, this yields the
    univalence of~$\psi$ in~\Cd. Furthermore, since~$\delta$ is an arbitrary positive number,
    the lemma will hold in the whole open half-plane~\Cp.
    
    For the arc~$\gamma_i$, \ $i=1,2,\dots$, consider its natural parameter~$\tau$. Orienting
    the arc according to the growth of~$r$, we obtain $\frac{\partial\tau}{\partial r}>0$. In
    addition, let us consider a coordinate $\nu$ changing in a direction orthogonal to~$\tau$,
    \textit{i.e.} such that $(\tau,\nu)$ form an orthogonal coordinate system. Then, with the help
    of the inequality~\eqref{eq:Im_inc} and one of the Cauchy-Riemann equations, we deduce that%
    \footnote{In fact we have more: $\partial\Im\psi(z) / \partial\nu=0$ implies that the
        gradient of $\Im\psi$ on~$\gamma_i$ is tangential to~$\gamma_i$.}
    \[
       0
       < \frac{\partial\Im\psi(z)}{\partial r}
       = \frac{\partial\Im\psi(z)}{\partial\tau}\frac{\partial\tau}{\partial r}
         + \frac{\partial\Im\psi(z)}{\partial\nu} \frac{\partial \nu}{\partial r}
       = \frac{\partial\Im\psi(z)}{\partial\tau}\frac{\partial\tau}{\partial r}
         \pm \frac{\partial\Re\psi(z)}{\partial\tau} \frac{\partial \nu}{\partial r}
       = \frac{\partial\Im\psi(z)}{\partial\tau}\frac{\partial\tau}{\partial r}.
    \]
    Therefore, it is true that $z_1,z_2\in\gamma_i$ and $|z_1|<|z_2|$ imply
    $\Im\psi(z_1)<\Im\psi(z_2)$, which is equivalent to~\eqref{item:lm1b}.

    Now, given two consecutive arcs~$\gamma_i$ and $\gamma_{i+1}$ consider the
    arguments~$\theta_1$ and~$\theta_2$ of their adjacent points, \textit{i.e.}
    \[
    \theta_1 \colonequals
      \lim_{|z|\to r_1:\ z\in\gamma_i}\Arg z
    \quad\text{and}\quad
    \theta_2 \colonequals
      \lim_{|z|\to r_2:\ z\in\gamma_{i+1}}\Arg z,
    \quad\text{where}\quad
    r_1=\sup_{z\in\gamma_i}|z|,\quad r_2=\inf_{z\in\gamma_{i+1}}|z|.
    \]
    The arguments can be either $\pi-\delta$ or $\delta$ since the arcs are regular, and hence
    can only end at the boundary of~\Cd. Observe that $\theta_1=\theta_2$. Indeed, let for
    example $\theta_1=\pi-\delta$. Then $\Re\psi(z)<a$ as $|z|=r_1$. However, $\theta_2=\delta$
    in its turn would imply $\Re\psi(z)>a$ when $|z|=r_2$. So, in the ``semi-annulus''
    $\{z\in\Cd:r_1<|z|<r_2\}$ there would be such $z$ that $\Re\psi(z)=a$, \textit{i.e.}
    $z\in\Gamma_\delta$ which contradicts the fact that $\gamma_i$ and $\gamma_{i+1}$ are
    consecutive arcs of~$\Gamma_\delta$.

    Since $\theta_1=\theta_2$, the
    ray~$\Theta\colonequals\left\{re^{i\theta_1},r>\delta\right\}$ meets both arcs~$\gamma_i$
    and~$\gamma_{i+1}$ in the limiting points~$r_1e^{i\theta_1}$ and~$r_2e^{i\theta_1}$,
    respectively. As a consequence, we obtain that
    $\Im\psi(r_1e^{i\theta_1})<\Im\psi(r_2e^{i\theta_1})$ since $\Im\psi$ grows everywhere
    on~$\Theta$ by the condition~\eqref{eq:Im_inc}. Then \eqref{item:lm1b} implies that
    \( \sup_{z\in\gamma_i}\Im\psi(z)\le\inf_{z\in\gamma_{i+1}}\Im\psi(z) \). Thus, the
    condition~\eqref{item:lm1c} is satisfied as well.
\end{proof}

\section{Properties of \texorpdfstring{$\alpha$}{\textalpha}-points on the real line}
\label{sec:case-meromorphic-phi}
\begin{lemma}\label{lemma:no_tri_z}
    Under the conditions of Lemma~\ref{lemma:simp_z}, let the function~\(\phi\) admit an
    analytic continuation through the interval \((x_1,x_2)\subset\mathbb{R}\setminus\{0\}\).
    Then the function~\(\psi\) defined as in Lemma~\ref{lemma:simp_z} has no triple
    \(\alpha\)-points in~\((x_1,x_2)\).
\end{lemma}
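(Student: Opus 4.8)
The plan is to convert the statement about $\alpha$-points of $\psi$ into one about the order of vanishing of $\phi$, and then to rule out high-order real zeros of $\phi$ directly from the hypothesis $\phi(\Cp)\subseteq\Cp$.

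First I would note that $\psi$ extends analytically through $(x_1,x_2)$: since $0\notin(x_1,x_2)$ and $\phi$ continues analytically across the interval, the function $\psi'(z)=\phi(z)/z$ is holomorphic in a disc around any $x_*\in(x_1,x_2)$ not containing the origin; a primitive of it on that disc agrees, up to an additive constant, with the chosen branch of $\psi$ on the part of the disc inside $\Cp$, and adjusting the constant gives the continuation. So it is legitimate to speak of $\alpha$-points of $\psi$ on $(x_1,x_2)$. Fix $x_*\in(x_1,x_2)$, set $\alpha:=\psi(x_*)$, and differentiate $z\psi'(z)=\phi(z)$. Using $x_*\neq0$ one gets $\psi'(x_*)=0\iff\phi(x_*)=0$, and, whenever $\phi(x_*)=0$, $\psi''(x_*)=\phi'(x_*)/x_*$ and $\psi'''(x_*)=\phi''(x_*)/x_*$. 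Hence $x_*$ is a triple $\alpha$-point of $\psi$ precisely when $\phi$ has a zero of order exactly $2$ at $x_*$, and more generally an $\alpha$-point of multiplicity $\ge 3$ forces $\phi$ to vanish to order $\ge 2$ at $x_*$. Everything therefore reduces to the claim that a function holomorphic in $\Cp$ with values in $\Cp$, continuing analytically to a neighbourhood of a real point $x_*$, cannot vanish to order $\ge 2$ at $x_*$.

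To prove this claim I would argue by contradiction. If $\phi(x_*)=0$ then $\phi\not\equiv0$ (it is valued in the \emph{open} half-plane $\Cp$), so near $x_*$ one has $\phi(z)=c_m(z-x_*)^m+O\big((z-x_*)^{m+1}\big)$ with $c_m\neq0$; assume $m\ge 2$. For $z=x_*+re^{i\theta}$ with small $r>0$ and $\theta\in(0,\pi)$ we then have $z\in\Cp$ and
\[
 \Im\phi(z)=|c_m|\,r^{m}\sin\!\big(m\theta+\arg c_m\big)+o\big(r^{m}\big).
\]
As $\theta$ runs over $(0,\pi)$ the angle $m\theta+\arg c_m$ sweeps an open interval of length $m\pi\ge 2\pi$, so there is a fixed $\theta_0\in(0,\pi)$ at which $\sin(m\theta_0+\arg c_m)$ is strictly negative; letting $r\to0$ along that ray makes $\Im\phi(x_*+re^{i\theta_0})<0$, contradicting $\phi(\Cp)\subseteq\Cp$. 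Hence $m\le 1$, which proves the claim and the lemma. (Alternatively, one can appeal to a boundary Schwarz/Hopf-type argument: $\Im\phi$ is harmonic and nonnegative near $x_*$ and vanishes at $x_*$, so its inner normal derivative there, which equals $\Re\phi'(x_*)$ by the Cauchy-Riemann equations, is strictly positive unless $\phi$ is a real constant --- which is excluded; thus $\phi'(x_*)\neq0$.)

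The reduction to the claim is just routine differentiation, so the real content, and the only step I expect to require care, is the claim. The subtlety is that one must exploit $\phi$ taking values in the \emph{open} upper half-plane --- so that $\phi$ is not a real constant --- together with the analytic continuation across $(x_1,x_2)$, rather than any stronger hypothesis such as $\phi$ being real on the interval; the winding estimate above is precisely what converts these two facts into the bound $m\le1$, and it simultaneously excludes $\alpha$-points of every multiplicity $\ge 3$.
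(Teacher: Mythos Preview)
Your proof is correct and follows essentially the same route as the paper: reduce the absence of triple $\alpha$-points of $\psi$ to the absence of double zeros of $\phi=z\psi'$ on $(x_1,x_2)$, then observe that a zero of order $\ge 2$ at a real point would force $\Im\phi$ to take both signs in a small upper half-disc, contradicting $\phi(\Cp)\subset\Cp$. The paper's argument is a terse one-paragraph version of exactly this; your write-up supplies the details (the analytic continuation of $\psi$, the explicit chain of derivatives, and the winding estimate) that the paper leaves implicit.
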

\begin{proof}
    The assertion of this lemma is exactly that $\phi(z)=z\psi'(z)$ has no double zeros on
    $(x_1,x_2)$. However, if $\phi$ could have a double zero $x_0$, then $\Im\phi(z)$ in the
    semi-disk $\{z\in\Cp:|z-x_0|<\varepsilon\ll 1\}$ must have values of both signs (since
    $\phi(z)$ is close to $(z-x_0)^2$ for such~$z$). In its turn, this
    contradicts~$\phi(\Cp)\subset\Cp$.
\end{proof}

Further in this section, we restrict the \Rc-functions~$\phi_1$,~$\phi_2$ to be meromorphic
in~$\mathbb{C}$ and real on the real line (where finite), \textit{i.e.} to have the (absolutely
convergent) Mittag-Lefler representation%
\footnote{Non-constant meromorphic functions of this form (and only of this form) map \Cp\
    into~\Cp, see~\cite[\S~V Thm.~1]{ChebMei}.}
\begin{equation}\label{eq:R_func}
    B + Az-\frac{A_0}{z}-\sum_{\nu\ne 0}\frac{z A_\nu/a_\nu}{z-a_\nu},
    \quad\text{where}\ \ 
    B,a_\nu\in\mathbb{R},\ \ 
    a_\nu\ne 0,\ \ 
    A,A_0\ge 0
    \ \ \text{and}\ \ 
    A_\nu>0
    \ \ \text{for all}\ \  \nu\ne 0,
\end{equation}
such that~$\phi_1,\phi_2\not\equiv B$. For our purposes, we need functions of more general form.
Let us take a non-constant function~$\phi(z)$ with the representation~$\phi_1(z)-\phi_2(1/z)$,
where~$\phi_1(z)$ and~$\phi_2(z)$ are as given by~\eqref{eq:R_func}. Note that both mappings
$z\mapsto\frac 1z$ and $z\mapsto -z$ map the upper half of the complex plane \Cp\ into the lower
half-plane. Therefore, $\phi$ is necessarily an \Rc-function.

\begin{remark}
    If $z\psi'(z)$ has the form~\eqref{eq:R_func}, then~$\psi(z)$ can be represented as
    \[
    \psi(z) = \int \frac{z\psi'(z)}{z}dz
    = C + B\ln z + Az + \frac {A_0}{z}
    - \sum_\nu\frac{A_\nu}{a_\nu}\ln\left(1-\frac{z}{a_\nu}\right)
    \]
    for some complex constant~$C$. This implies the equality
    \begin{equation}\label{eq:lnF}
        \Re\psi(z)
        = \Re C  + B\ln|z| + \left(A + \frac {A_0}{|z|^2}\right) \Re z
        - \sum_\nu\frac{A_\nu}{a_\nu}\ln\left|1-\frac{z}{a_\nu}\right|.
    \end{equation}
\end{remark}
\begin{remark}\label{rem:psi_1_psi_2}
    If \(z\psi'(z)=\phi(z)=\phi_1(z)-\phi_2(1/z)\), then we introduce two auxiliary
    functions~$\psi_1$ and~$\psi_2$ (single-valued in~$\overline\Cp$ where regular) so that
    $z\psi_1'(z)\equalscolon\phi_1(z)$ and $\psi(z)-\psi_1(z)\equalscolon\psi_2(z)$. These
    settings then imply $z\psi_2'(z)=-\phi_2(\frac 1z) = z^2\left(\frac 1z\right)'\!\!{}\cdot\phi_2\left(\frac 1z\right)$, that
    is~$\psi_2\left(\frac 1z\right)=\int\frac{\phi_2(z)}{z}dz$. Both $\phi_1(z)$ and $\phi_2(z)$
    satisfy~\eqref{eq:R_func}, therefore
    \[
    \Re\psi(z) = \Re\psi_1(z)+\Re\psi_2(1/z),
    \quad\text{where both~$\Re\psi_1(z)$ and~$\Re\psi_2(1/z)$ have the form~\eqref{eq:lnF}}.
    \]
    In particular, in each pole~$x_*\ne 0$ of~$\phi$ the function~$\psi$ has a logarithmic
    singularity and~$\Re\psi(z)\to +\infty\cdot x_*$ when~$z\to x_*$.
\end{remark}

\begin{lemma}\label{lemma:phi_is_real}
    If~\(x\psi'(x)=\phi(x)=\phi_1(x)-\phi_2(1/x)\), where $x\in\mathbb{R}$
    and~\(\phi_1(x)\), \(\phi_2(x)\) have the form~\eqref{eq:R_func}, then the following
    assertions are true.
    \begin{compactenum}[\upshape (a)]
    \item \label{item:a_lemma2} %
        The function~\(\Im\psi(x)\) does not change its value possibly except at the origin and
        poles of~\(\phi\).
    \item \label{item:b_lemma2} %
        Between every two consecutive negative poles~$x_2<x_1$ of~\(\phi\), there is exactly one
        local minimum of~\(\Re\psi\).
    \item \label{item:c_lemma2} %
        Between every two consecutive positive poles~$x_1<x_2$ of~\(\phi\), there is exactly one
        local maximum of~\(\Re\psi\).
    \item \label{item:d_lemma2} %
        In~\eqref{item:b_lemma2} and~\eqref{item:c_lemma2}, $x_1$ can be set to zero provided
        that $\phi$ is regular between~$0$ and~$x_2$,
        and~$\lim_{t\to+0}\big|\phi(tx_2)\big|=\infty$. In this case we
        have~$\Re\psi(tx_2)\to +\infty\cdot x_2$ as~$t\to+0$.
    \end{compactenum}
\end{lemma}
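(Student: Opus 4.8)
The four assertions share one engine: the identity $\psi'(x)=\phi(x)/x$, valid wherever $\psi$ extends holomorphically across the real axis, together with the fact that $\phi(x)=\phi_1(x)-\phi_2(1/x)$ is \emph{real} at real $x$ (the coefficients in~\eqref{eq:R_func} being real). Hence, on each connected component of $\mathbb{R}\setminus(\{0\}\cup P)$, where $P$ is the set of poles of $\phi$, one has $\frac{d}{dx}\Im\psi(x)=\Im\psi'(x)=0$ and $\frac{d}{dx}\Re\psi(x)=\psi'(x)=\phi(x)/x$. Assertion~\eqref{item:a_lemma2} is then immediate: $\Im\psi$ is locally constant off $\{0\}\cup P$, while $\psi$ has genuine logarithmic branch points at $0$ and at the poles of $\phi$ by Remark~\ref{rem:psi_1_psi_2}, so those are the only points where its value can jump.

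For~\eqref{item:b_lemma2} and~\eqref{item:c_lemma2}, fix two consecutive poles $x_2,x_1$ of $\phi$ lying on the same side of the origin and let $I$ be the open interval they bound. Since $\phi$ is an \Rc-function, meromorphic on $\mathbb{C}\setminus\{0\}$ and real on $\mathbb{R}$, all its poles are simple and $\phi$ increases strictly between consecutive ones, sweeping out all of $(-\infty,+\infty)$ on $I$; thus $\phi$ has exactly one zero $\xi\in I$. As $x$ keeps a fixed sign on $I$, the quotient $\phi(x)/x=(\Re\psi)'(x)$ changes sign exactly once on $I$, namely at $\xi$, so $\Re\psi$ has exactly one critical point in $I$. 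Its type (a minimum in the negative case, a maximum in the positive case) is forced by the boundary behaviour: by Remark~\ref{rem:psi_1_psi_2}, $\Re\psi(z)\to+\infty\cdot x_*$ as $z\to x_*$ for every pole $x_*$, and both endpoints of $I$ lie on the same side of $0$, so $\Re\psi$ tends to one and the same signed infinity at the two ends of $I$; this pins down whether the unique interior critical point is a minimum or a maximum.

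Assertion~\eqref{item:d_lemma2} is the same argument with the pole $x_1$ replaced by the origin. The hypotheses --- $\phi$ regular on the interval $J$ between $0$ and $x_2$, and $\lim_{t\to+0}|\phi(tx_2)|=\infty$ --- force the only singular part of $\phi$ at $0$ to be a term $\kappa/z$ with $\kappa\ne0$, contributed by the $-A_0/z$ summand of $\phi_1$ and by the linearly growing part of $\phi_2$ at infinity (so that in fact $\kappa<0$). Integrating $\phi/z$, the function $\psi$ acquires a dominant term $(-\kappa)/z$ near $0$, whence, by~\eqref{eq:lnF}, $\Re\psi(tx_2)\sim\frac{-\kappa}{tx_2}\to+\infty\cdot x_2$ as $t\to+0$ (the remaining terms being only logarithmic in $t$). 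Thus $0$ plays the role of a pole-endpoint of $\Re\psi$ on $J$; $\phi$ is again strictly monotone on $J$ with a single zero, and the reasoning of~\eqref{item:b_lemma2}/\eqref{item:c_lemma2} carries over verbatim.

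The part that genuinely needs care --- everything else being the routine monotonicity and intermediate-value reasoning above --- is the local analysis at the poles and at the origin: one must check that a pole of $\phi$ arising from $\phi_2(1/z)$, located at the reciprocal of a pole of $\phi_2$, displays on its two sides exactly the same sign pattern of $\phi$, and the same $\pm\infty$ behaviour of $\Re\psi$, as a pole arising from $\phi_1$; and, for~\eqref{item:d_lemma2}, that the blow-up hypothesis really does fix the sign of $\kappa$ so that $\Re\psi(tx_2)\to+\infty\cdot x_2$ rather than the opposite. Once these local normalisations are settled, the argument above closes.
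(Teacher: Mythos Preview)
Your treatment of \eqref{item:a_lemma2}, \eqref{item:b_lemma2} and \eqref{item:c_lemma2} is correct and essentially identical to the paper's: both use that $\phi$ is real on $\mathbb{R}$ (giving constancy of $\Im\psi$), that an \Rc-function is strictly increasing from $-\infty$ to $+\infty$ between consecutive real poles (giving exactly one zero of $(\Re\psi)'=\phi/x$), and the endpoint blow-up of $\Re\psi$ from Remark~\ref{rem:psi_1_psi_2} to fix the type of the extremum.

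Your argument for \eqref{item:d_lemma2}, however, has a genuine gap. You assert that the hypothesis $|\phi(tx_2)|\to\infty$ forces $\phi$ to have singular part $\kappa/z$ at the origin, with $\kappa$ coming only from the $-A_0/z$ term of $\phi_1$ and the linear part $A_2 w$ of $\phi_2(w)$ at infinity. This is not true in general: the Mittag--Leffler sum in $\phi_2$ can itself diverge at infinity. For instance, $\phi_2(w)=\sum_{n\ge 1}\frac{w}{n(w+n)}$ (which has the form~\eqref{eq:R_func} with $A_2=0$, poles at $-n$, residues $1$) behaves like $\log w$ as $w\to+\infty$, so $-\phi_2(1/z)\sim\log z$ as $z\to 0^+$; combined with $\phi_1$ having $A_0=0$, one gets $\phi(z)\to-\infty$ logarithmically with no $\kappa/z$ term at all. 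Your subsequent asymptotic $\Re\psi\sim -\kappa/z$ then fails.

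The paper's route for \eqref{item:d_lemma2} avoids any structural claim about the singularity: since $\phi$ is increasing on the interval between $0$ and $x_2$ and $|\phi(tx_2)|\to\infty$, one has $\phi(tx_2)\to -\infty\cdot\sign x_2$; hence $-\psi'(tx_2)=-\phi(tx_2)/(tx_2)>1/t$ for all small $t>0$, and integrating this lower bound from $t$ to $\tfrac12$ already forces $\Re\psi(tx_2)\to+\infty\cdot x_2$. This uses nothing beyond monotonicity and divergence, and works uniformly in all the cases (pole, logarithmic, or otherwise) that your argument tried to classify.
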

\begin{proof}
    Take a real~$x\ne 0$ such that both functions~$\phi_1(x)$ and~$-\phi_2(1/x)$ are regular.
    Since their values are real on the real line, the condition
    \[
    x\frac{\partial\Im\psi(x)}{\partial x}
    = r\frac{\partial\Im\psi(x)}{\partial r} = \Im\phi(x) = \Im\phi_1(x) - \Im\phi_2(1/x)  = 0
    \]
    is satisfied. So the assertion~\eqref{item:a_lemma2} is true.

    The function
    \( x\frac{\partial\Re\psi(x)}{\partial x} = \Re\phi(x) = \phi_1(x) - \phi_2(1/x) \) strictly
    increases form~$-\infty$ to~$+\infty$ between the points~$x_1$ and~$x_2$, and hence it
    changes its sign exactly once in the interval~$(\min(x_1,x_2),\max(x_1,x_2))$. That is,
    $\sign x\cdot\Re\psi(x)$ changes from decreasing to increasing on this interval, which is
    giving us the assertions~\eqref{item:b_lemma2} and~\eqref{item:c_lemma2} for both zero and
    nonzero~$x_1$.
    
    Suppose that the function~$\phi$ is regular between~$0$ and~$x_2$
    and~$\lim_{t\to+0}|\phi(tx_2)|$ is infinite. Then~$\phi$ increases in this interval,
    so~$\lim_{t\to+0}\phi(tx_2)=-\infty\cdot x_2$. Therefore,
    $-\psi'(tx_2)=-\frac{\phi(tx_2)}{tx_2}>\frac 1{t}$ for small enough~$t>0$ and
    \[
    \Re\psi(tx_2)
    = \Re\psi\left(\tfrac 12 x_2\right) + \int_{\frac 12 x_2}^{tx_2}\frac{\phi(x)}x dx
    = \Re\psi\left(\tfrac 12 x_2\right)
      + x_2\int_{t}^{\frac 12}\left(-\frac{\phi(sx_2)}{sx_2}\right) ds
    \to +\infty\cdot x_2 \quad\text{as}\quad t\to +0,
    \]
    which is~\eqref{item:d_lemma2}.
\end{proof}

\begin{lemma}\label{lemma:phi_is_real_XX}
    In addition to the conditions of Lemma~\ref{lemma:phi_is_real}, suppose that $\phi$ is a
    regular function in the
    interval~$\relpenalty 100 \mathfrak I=(\min\{0,x_2\},\max\{0,x_2\})\subset\mathbb{R}$, \
    $x_2$ is a pole of~$\phi$ and the limit~$\mathfrak B\colonequals\lim_{t\to+0}\phi(tx_2)$ is
    finite.\footnote{This limit exists since the function~$\phi$ increases in~$\mathfrak I$.}
    \begin{compactenum}[\upshape (a)]
    \item \label{item:a_lemma2xx}%
        \textls[30]{%
        If~$\mathfrak Bx_2>0$, then~$\Re\psi(x)$ is an increasing function
        in% the interval
~$\mathfrak I$ such that~$\Re\psi(\mathfrak I)=\mathbb{R}$, and
        furthermore,}~$\Re\psi(z)\ne\Re\psi(x)$ on condition that $|z|\le|x|$
        with~$x\in \mathfrak I$ and~$z\in\overline\Cp\setminus\{x\}$.
    \item \label{item:b_lemma2xx}%
        If~$\mathfrak Bx_2<0$, then $\Re\psi(x)$ has exactly one local extremum in~$\mathfrak I$
        and tends to~$+\infty\cdot x_2$ as~$x$ approaches~$0$ or~$x_2$.
    \item \label{item:c_lemma2xx}%
        If~$\mathfrak B=0$ then $\Re\psi(x)$ is an increasing function in~$\mathfrak I$ and the
        inequality~$\Re\psi(z)\ne\Re\psi(x)$ holds provided that~$|z|\le|x|$
        with~$z\in\overline\Cp$, \ $x\in \mathfrak I$. Moreover,
        $\lim_{t\to+0}\frac{\phi(tx_2)}{tx_2}$ is positive or~$+\infty$. If additionally $\Re\psi(tx_2)$ is
        unbounded as~$t\to+0$, then~$\Re\psi(\mathfrak I)=\mathbb{R}$.
    \end{compactenum}
\end{lemma}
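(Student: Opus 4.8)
We are dealing with a meromorphic $\mathcal{R}$-function $\phi(z)=\phi_1(z)-\phi_2(1/z)$ that is regular on the interval $\mathfrak{I}$ between $0$ and $x_2$, with $x_2$ a pole, and with $\mathfrak{B}=\lim_{t\to+0}\phi(tx_2)$ finite. The key structural fact, already used in Lemma~\ref{lemma:phi_is_real}, is that $x\mapsto x\,\partial_x\Re\psi(x)=\Re\phi(x)=\phi(x)$ (for real $x$) is \emph{strictly increasing} on $\mathfrak{I}$, tending to $+\infty\cdot x_2$ as $x\to x_2$ and to $\mathfrak{B}$ as $x\to 0$. So the whole analysis of $\Re\psi$ on $\mathfrak{I}$ reduces to reading off the sign of $\sign x\cdot\phi(x)$ on $\mathfrak{I}$. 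Throughout, the translation $\sign x\cdot\partial_x\Re\psi(x)$ has the sign of $\sign x\cdot\phi(x)$, so monotonicity of $\Re\psi$ as a function of $|x|$ follows by elementary calculus. The subtler claims are the two "$\Re\psi(z)\ne\Re\psi(x)$ for $|z|\le|x|$, $z\in\overline{\Cp}$" statements in \eqref{item:a_lemma2xx} and \eqref{item:c_lemma2xx}, which must invoke Lemma~\ref{lemma:simp_z} and Lemma~\ref{lemma:phi_is_real}.

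**Case \eqref{item:a_lemma2xx}, $\mathfrak{B}x_2>0$.** Since $\phi$ increases on $\mathfrak{I}$ from $\mathfrak{B}$ to $+\infty\cdot x_2$ and $\mathfrak{B}x_2>0$, the product $\sign x\cdot\phi(x)$ is strictly positive throughout $\mathfrak{I}$; hence $\Re\psi$ is strictly increasing in $|x|$ on $\mathfrak{I}$. For surjectivity onto $\mathbb{R}$: near $x_2$ we have $\phi(x)\sim\frac{A_\nu/a_\nu}{x/a_\nu-1}$-type blow-up giving $\Re\psi(x)\to+\infty\cdot x_2$ by the computation in Lemma~\ref{lemma:phi_is_real}\eqref{item:d_lemma2}; near $0$, since $\phi(x)\to\mathfrak{B}\ne 0$ with $\sign\mathfrak{B}=\sign x_2$, we get $\partial_x\Re\psi(x)=\phi(x)/x\sim\mathfrak{B}/x$, whose integral diverges to $-\infty\cdot x_2$ as $x\to0$. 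So $\Re\psi(\mathfrak{I})=\mathbb{R}$. For the separation claim I would argue: fix $x\in\mathfrak{I}$ and suppose $z\in\overline{\Cp}\setminus\{x\}$ with $|z|\le|x|$ and $\Re\psi(z)=\Re\psi(x)$. If $z\in\Cp$, apply Lemma~\ref{lemma:simp_z}: since $\Re\psi(z)=\Re\psi(x)=:a$ and $\Im\psi$ is (by Lemma~\ref{lemma:phi_is_real}\eqref{item:a_lemma2}) the constant value $b$ of $\Im\psi$ on $\mathfrak{I}$ while $\Im\psi(z)$ must differ from $b$ (else $z=x$ by univalence of $\psi$ on $\overline{\Cp}$-boundary-plus-interior — more precisely by the strict inequality \eqref{eq:Re_dec}/\eqref{eq:Im_inc}), we get $|x|<|z|$ if $\Im\psi(z)>b$ and $|z|<|x|$ if $\Im\psi(z)<b$; combined with $\Im\psi$ being continuous and the arc structure reaching the real axis, one shows the case $\Im\psi(z)<b$ cannot produce $\Re\psi=a$ without crossing $\mathfrak{I}$ again, contradicting injectivity of $\Re\psi|_{\mathfrak{I}}$. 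The case $z$ real, $z\in(-|x|,|x|)$, is handled by monotonicity of $\Re\psi$ in $|x|$ on $\mathfrak{I}$ plus Lemma~\ref{lemma:phi_is_real}\eqref{item:b_lemma2}--\eqref{item:c_lemma2} on the other components. Case \eqref{item:c_lemma2xx} with $\mathfrak{B}=0$ is essentially identical, except that near $0$ one must check $\phi(tx_2)/(tx_2)\to$ a positive limit or $+\infty$: this follows because $\phi(x)/x$ is the derivative of the increasing function $\Re\psi$ and $\phi(x)\to 0$ forces the quotient's sign to match $\sign x_2$ by the strict monotonicity of $\phi$ near $0$; the extra "unbounded $\Rightarrow\Re\psi(\mathfrak{I})=\mathbb{R}$" clause is then just the integral-divergence argument again.

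**Case \eqref{item:b_lemma2xx}, $\mathfrak{B}x_2<0$.** Now $\phi$ increases on $\mathfrak{I}$ from $\mathfrak{B}$ (with $\sign\mathfrak{B}=-\sign x_2$) to $+\infty\cdot x_2$, so by the intermediate value theorem $\phi$ has exactly one zero $x_0$ in $\mathfrak{I}$, and $\sign x\cdot\phi(x)$ is negative for $|x|<|x_0|$ and positive for $|x_0|<|x|<|x_2|$; hence $\Re\psi$ (as a function of $|x|$) decreases then increases, i.e. has exactly one local extremum (a minimum in $|x|$) at $x_0$. The boundary behaviour at $x_2$ is $+\infty\cdot x_2$ exactly as in Lemma~\ref{lemma:phi_is_real}\eqref{item:d_lemma2}. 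The new point is the behaviour as $x\to0$: here $\phi(x)\to\mathfrak{B}\ne0$ with $\sign\mathfrak{B}=-\sign x_2$, so $\sign x_2\cdot\partial_x\Re\psi(x)=\sign x_2\cdot\phi(x)/x\sim\mathfrak{B}/(x\,\sign x_2)<0$ with a $1/|x|$ singularity, whose integral diverges; tracking signs, $\Re\psi(x)\to+\infty\cdot x_2$ as $x\to0$. That completes \eqref{item:b_lemma2xx}.

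**Main obstacle.** The routine parts — monotonicity and the boundary limits — are immediate from the strict monotonicity of $\phi$ on $\mathfrak{I}$ together with Lemma~\ref{lemma:phi_is_real}. The genuinely delicate part is the separation statement "$\Re\psi(z)\ne\Re\psi(x)$ whenever $|z|\le|x|$, $z\in\overline{\Cp}\setminus\{x\}$, $x\in\mathfrak{I}$" in \eqref{item:a_lemma2xx} and \eqref{item:c_lemma2xx}: one must rule out both interior points $z\in\Cp$ (via the inequality chain of Lemma~\ref{lemma:simp_z}, being careful that $x$ lies on the boundary, so the lemma's strict conclusion $|z_1|<|z_2|$ must be applied in the limiting boundary form and combined with the constancy of $\Im\psi$ on $\mathfrak{I}$) and other boundary points $z\in\mathbb{R}$ (via Lemma~\ref{lemma:phi_is_real} applied on the adjacent intervals between poles, noting that on those intervals $\Re\psi$ attains its $\mathfrak{I}$-values only at points of larger modulus, if at all). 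I expect this boundary-case bookkeeping — matching up the arcs $\gamma_i$ of Lemma~\ref{lemma:simp_z} with the real interval $\mathfrak{I}$ and its neighbours — to be where the real care is needed.
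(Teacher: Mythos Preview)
Your treatment of the monotonicity of $\Re\psi$ on $\mathfrak I$ and of the boundary limits (via the logarithmic divergence $\mathfrak B\ln t$ near $0$ and the pole behaviour near $x_2$) matches the paper's argument essentially line for line, and your handling of case~\eqref{item:b_lemma2xx} is correct.

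Where you diverge from the paper is in what you call the ``main obstacle'': the separation claim $\Re\psi(z)\ne\Re\psi(x)$ for $|z|\le|x|$, $z\in\overline{\Cp}\setminus\{x\}$, $x\in\mathfrak I$. You propose to route this through Lemma~\ref{lemma:simp_z}, comparing $\Im\psi(z)$ with the constant value of $\Im\psi$ on $\mathfrak I$ and then doing ``boundary-case bookkeeping'' with the arcs~$\gamma_i$. This is unnecessarily complicated, and as you yourself note, Lemma~\ref{lemma:simp_z} is stated only for $z_1,z_2\in\Cp$, so a boundary version would need separate justification. The paper's route is much shorter: integrating \eqref{eq:Re_dec} in the angular variable gives directly
\[
\Re\psi(-|z|)<\Re\psi(z)<\Re\psi(|z|)\quad\text{for }\Im z>0,
\]
which is recorded as~\eqref{eq:re_psi_ineq}. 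Combined with the already-established monotonicity of $\Re\psi$ on $\mathfrak I$, this one inequality settles the separation claim in a single line (e.g.\ for $x_2<0$: if $|z|\le|x|$ then $-|z|\in[x,0)\subset\mathfrak I$, so $\Re\psi(x)\le\Re\psi(-|z|)<\Re\psi(z)$). You actually cite \eqref{eq:Re_dec} in passing, but do not notice that it is the whole story; no arc analysis from Lemma~\ref{lemma:simp_z} is needed.

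A minor additional point: in case~\eqref{item:c_lemma2xx} you argue that $\phi(tx_2)/(tx_2)$ has the correct sign, but you do not explain why its limit as $t\to+0$ is \emph{strictly} positive rather than zero. The paper handles this by the remark that an $\mathcal R$-function cannot vanish faster than linearly at a real point (equivalently, the derivative of a nonconstant $\mathcal R$-function is strictly positive on its real regularity set).
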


\begin{proof}
    In the interval~$\mathfrak I$, the function
    \( x\frac{\partial\Re\psi(x)}{\partial x} = \phi(x)\) strictly increases, and hence
    changes its sign at most once. Therefore, $\Re\psi(x)$ has at most one local extremum:
    maximum for~$x_2<0$ and minimum for~$x_2>0$.
    Suppose that~$0<|\mathfrak B|<\infty$. Then the equality
    \( x\frac{\partial\Re\psi(x)}{\partial x} = \phi(x)\) yields the following relation
    \[
    \Re\psi(tx_2)
    = \Re\psi\left(\tfrac 12 x_2\right)
    + \int_{\frac 12 x_2}^{tx_2}\frac{\phi(x)-\mathfrak B}x dx
    + \mathfrak B\ln\frac{tx_2}{\frac 12 x_2}
    \sim \mathfrak B\ln{t} \to -\infty\cdot \mathfrak B
    \quad
    \text{when}\quad t\to+0.
    \]   
    On account of~$\Re\psi(x)\to +\infty\cdot x_2$ when~$x\to x_2$ (see
    Remark~\ref{rem:psi_1_psi_2}) this relation implies the assertion~\eqref{item:b_lemma2xx}
    and that $\Re\psi$ increases in~$\mathfrak I$ from~$-\infty$ to~$+\infty$
    if~$\mathfrak Bx_2>0$. Therefore, to obtain~\eqref{item:a_lemma2xx} it is enough to use the
    inequality
    \begin{equation} \label{eq:re_psi_ineq}
        \Re\psi(-|z|)<\Re\psi(z)<\Re\psi(|z|), \ww \Im z>0,
    \end{equation}
    which is a consequence of~\eqref{eq:Re_dec}. Indeed, if for example~$x_2<0$, then we have
    $\Re\psi(x)\le\Re\psi(-|z|)<\Re\psi(z)$ for each~$x\in\mathfrak I$ satisfying~$|x|\ge|z|$.

    The condition~$\mathfrak B=0$ implies~$\frac{\phi(x)}{x}>0$ in the interval~$\mathfrak I$,
    \textit{i.e.} that~$\Re\psi$ is growing independently of the sign of~$x_2$. The inequality
    $\lim_{t\to+0}\frac{\phi(tx_2)}{tx_2}\ne 0$ is provided by the fact that \Rc-functions
    cannot vanish faster then linearly. Furthermore, $\Re\psi$ runs through the
    whole~$\mathbb{R}$ on condition that it is unbounded near the origin, as asserted
    in~\eqref{item:c_lemma2xx}. If~$|z|\le|x|$ with~$z\in\overline\Cp$ and $x\in \mathfrak I$,
    then the inequality~\eqref{eq:re_psi_ineq} provides~$\Re\psi(z)\ne\Re\psi(x)$.
\end{proof}

\begin{remark} \label{rem:prop-a-poi-infinite} In Lemma~\ref{lemma:phi_is_real} and
    Lemma~\ref{lemma:phi_is_real_XX}, the value of~$x_2$ can be taken equal to~$+\infty$
    or~$-\infty$ at the cost of some of the conclusions. With such a choice, the
    condition~$\Re\psi(x)\to +\infty\cdot x_2$ as~$x\to x_2$ may be violated. This, in turn,
    implies that the function~$\Re\psi(x)$ in~\eqref{item:b_lemma2}, \eqref{item:c_lemma2}
    and~\eqref{item:d_lemma2} of Lemma~\ref{lemma:phi_is_real} and~\eqref{item:b_lemma2xx} of
    Lemma~\ref{lemma:phi_is_real_XX} may lose the extremum and become monotonic. In
    cases~\eqref{item:a_lemma2xx} and~\eqref{item:c_lemma2xx} of
    Lemma~\ref{lemma:phi_is_real_XX}, \ $\Re\psi(\mathfrak I)$ is only a semi-infinite interval
    of the real line, instead of the equality~$\Im\psi(\mathfrak I)=\mathbb R$.
\end{remark}

\section{Location of \texorpdfstring{$\alpha$}{\textalpha}-points
    in the closed upper half-plane}
\label{sec:main-theorems_PF}

\begin{lemma}\label{lemma:all_vals}
    % Suppose that
    Let functions~\(\phi_1(z)\), \(\phi_2(z)\)
    %have
    be of the form~\eqref{eq:R_func} and let $\psi(z)$
    % is
    be a smooth branch of~\(\int\frac{\phi_1(z)-\phi_2(1/z)}{z}dz\).
    % If the points \(z_1,z_2\in\Cp\) satisfy~\eqref{eq:lm1_cond},
    %In addition to the conditions of Lemma~\ref{lemma:simp_z}, suppose that \(\phi(z)\) has the
    % form~\eqref{eq:R_func} and
    If two points~$z_1,z_2\in\overline\Cp$ that are regular for~$\psi$ satisfy $|z_1|<|z_2|$ and
    $\Re \psi(z_1) = \Re \psi(z_2)\equalscolon a$, then
    \begin{compactenum}[\upshape(a)]
    \item \label{item:all_vals_a}%
        \( \Im\psi(z_1)\le\Im\psi(z_2)\);
    \item %
        for each~$\varrho\in(\Im\psi(z_1),\Im\psi(z_2))$ there exists $z\in\overline\Cp$ such
        that $|z_1|<|z|<|z_2|$ and~$\psi(z)=a+i\varrho$;
    \item \label{item:all_vals_c}%
        $z_1$ and~$z_2$ can be connected by a piecewise analytic curve of a finite length, on
        which~$\psi$ is smooth and~$\Im\psi(z)$ is a non-decreasing function of~$|z|$; the curve
        is a subinterval of~$\mathbb{R}$ if and only if equality holds
        in~\eqref{item:all_vals_a};
    \item \label{item:all_vals_d}%
        furthermore, equality holds in~\eqref{item:all_vals_a} if and only if
        $z_1,z_2\in\mathbb{R}$, \ $z_1\cdot z_2\ge 0$ and $\psi(z)\ne a$ for
        all~$z\in\overline\Cp$ such that~$|z_1|<|z|<|z_2|$.
    \end{compactenum}
\end{lemma}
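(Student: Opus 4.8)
The plan is to treat $\psi$ as the sum $\psi_1(z)+\psi_2(1/z)$ introduced in Remark~\ref{rem:psi_1_psi_2}, but the cleanest route is to re-run the geometric argument of Lemma~\ref{lemma:simp_z} directly on the closed half-plane while keeping track of what happens on the pieces of $\mathbb{R}$. First I would record the two monotonicity facts that survive on $\overline{\Cp}$: from the Cauchy--Riemann computation, $\partial_r\Im\psi=\Im\phi(z)\ge 0$ (with strict inequality off $\mathbb{R}$, by Lemma~\ref{lemma:phi_is_real} it is $0$ on $\mathbb{R}\setminus\{0,\text{poles}\}$), and $\partial_\theta\Re\psi=-\Im\phi(z)\le 0$; and on $\mathbb{R}\setminus\{0\}$ the function $\sign(x)\cdot\Re\psi(x)$ is piecewise strictly monotone with the extremum structure of Lemma~\ref{lemma:phi_is_real}(\ref{item:b_lemma2})--(\ref{item:c_lemma2}). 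The level set $\Gamma\colonequals\{z\in\overline\Cp:\Re\psi(z)=a\}$ again meets each circle $|z|=r$ in at most one interior point (by $\partial_\theta\Re\psi<0$ on $\Cp$), plus possibly some real points; and as in Lemma~\ref{lemma:simp_z} the open-half-plane arcs $\gamma_1,\gamma_2,\dots$ can be ordered by distance to $0$, with $\Im\psi$ strictly increasing in $|z|$ along each $\gamma_i$.

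The new content is the bookkeeping at $\mathbb{R}$. Between consecutive arcs $\gamma_i,\gamma_{i+1}$, the argument of Lemma~\ref{lemma:simp_z} that gave $\theta_1=\theta_2$ must be replaced: the adjacent endpoints now may land on $\mathbb{R}$, i.e. at $\theta=0$ or $\theta=\pi$. Here I would argue that in the semi-annulus $\{r_1<|z|<r_2\}$ the only way to get from one arc to the next without an interior point of $\Gamma$ is to pass through a piece of the real axis on which $\Re\psi\equiv a$ is impossible (since $\sign(x)\Re\psi$ is strictly monotone except through its isolated extrema), so in fact the arcs are joined \emph{through} $\mathbb{R}$ exactly at an extremum point of $\Re\psi$ described by Lemma~\ref{lemma:phi_is_real}; on the real segment between the foot of $\gamma_i$ and that of $\gamma_{i+1}$, inequality~\eqref{eq:re_psi_ineq} together with $\partial_\theta\Re\psi<0$ forces $\sup_{\gamma_i}\Im\psi\le\inf_{\gamma_{i+1}}\Im\psi$, because $\Im\psi$ is constant along that real segment (Lemma~\ref{lemma:phi_is_real}(\ref{item:a_lemma2})) and the value at the real foot dominates the whole of $\gamma_i$ and is dominated by the whole of $\gamma_{i+1}$. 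Chaining this gives (\ref{item:all_vals_a}), and the explicit curve built out of the arcs $\gamma_i$ and the connecting real segments — each piece analytic, total length finite because every bounded subdomain of $\overline\Cp$ meets only finitely many arcs — gives (\ref{item:all_vals_c}); the intermediate-value statement (b) is then immediate from continuity of $\Im\psi$ along that curve.

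For the equality case (\ref{item:all_vals_d}): if $z_1$ or $z_2$ lies in $\Cp$, then $\Im\psi$ is \emph{strictly} increasing in $|z|$ on the corresponding arc, and since the connecting curve from (c) passes through that arc over a nondegenerate range of radii, $\Im\psi(z_1)<\Im\psi(z_2)$ strictly; so equality forces $z_1,z_2\in\mathbb{R}$. If $z_1,z_2\in\mathbb{R}$ lie on opposite sides of $0$, or if $\psi$ takes the value $a$ somewhere with $|z_1|<|z|<|z_2|$, then the curve of (c) must detour into $\Cp$ (it cannot stay real: $\sign(x)\Re\psi$ is strictly monotone near $0$ on each side, so the value $a$ is attained at most once on each side of $0$), and again $\Im\psi$ strictly increases; conversely if $z_1z_2\ge 0$ and $\psi\ne a$ strictly between, the real interval from $z_1$ to $z_2$ already lies in $\Gamma$... wait — it need not, unless $\Re\psi\equiv a$ there; rather, one shows $\Re\psi$ is \emph{strictly monotone} between $z_1$ and $z_2$ on that side of $0$ except through an extremum, so $\Re\psi(z_1)=\Re\psi(z_2)=a$ with no extremum strictly between is impossible unless the segment is itself in $\Gamma$, forcing $z_1=z_2$ — contradiction — \emph{or} the segment straddles exactly one extremum, in which case $\Im\psi$ is constant along it (Lemma~\ref{lemma:phi_is_real}(\ref{item:a_lemma2})) and equality holds. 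I expect the delicate point to be precisely this last dichotomy on the real line — disentangling when the connecting curve can be taken to be a real subinterval versus when it is forced into the open half-plane — and making it rigorous will lean on the strict monotonicity and extremum count from Lemma~\ref{lemma:phi_is_real} together with~\eqref{eq:re_psi_ineq}.
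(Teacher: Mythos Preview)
Your approach is essentially the paper's: re-run the level-set argument of Lemma~\ref{lemma:simp_z}, now letting the arcs of $\Gamma$ terminate on $\mathbb{R}$ and splice them together with real segments on which $\Im\psi$ is constant. The structure is right, but you skip the one technical step that does the real work.

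The gap is at the splice. You write that the feet of consecutive arcs $\gamma_i,\gamma_{i+1}$ are joined by a real segment on which $\Im\psi$ is constant by Lemma~\ref{lemma:phi_is_real}(\ref{item:a_lemma2}). But that lemma gives constancy only away from the poles of $\phi$; at a pole, $\Im\psi$ jumps and $\psi$ is not smooth there, so the curve you assemble for (\ref{item:all_vals_c}) would fail. You also have not argued that the two feet land on the \emph{same} half of the real axis, so that there even is a segment between them avoiding $0$. The paper handles both points at once: since $\gamma_i,\gamma_{i+1}$ are consecutive, one of $\Re\psi(z)<a$ or $\Re\psi(z)>a$ holds throughout the intermediate semi-annulus in $\Cp$, and by~\eqref{eq:re_psi_ineq} the sign of this inequality pins the common sign of the feet $\zeta_1,\zeta_2$. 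Then Remark~\ref{rem:psi_1_psi_2} gives $\Re\psi(z)\to +\infty\cdot x_*$ as $z$ approaches any nonzero pole $x_*$, which is incompatible with the one-sided bound on $\Re\psi$ just established on the segment between $\zeta_1$ and $\zeta_2$; hence that segment is pole-free and Lemma~\ref{lemma:phi_is_real}(\ref{item:a_lemma2}) applies legitimately. Without this step your chain breaks --- you cannot conclude $\sup_{\gamma_i}\Im\psi\le\inf_{\gamma_{i+1}}\Im\psi$, and the curve you build for (\ref{item:all_vals_c}) need not be one on which $\psi$ is smooth.

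Your treatment of (\ref{item:all_vals_d}) is, as you acknowledge, not yet a proof. The dichotomy you are reaching for is precisely what the pole-free splice resolves: once the connecting curve is known to be either a single pole-free real segment (on which $\Im\psi$ is constant) or to contain a nondegenerate arc in $\Cp$ (on which $\Im\psi$ is strictly increasing in $|z|$), the equality characterization follows directly.
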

\begin{proof}
    Recall that the function~$\phi(z)=\phi_1(z)-\phi_2(1/z)$ maps~$\Cp\to\Cp$, \textit{i.e.}
    satisfies Lemma~\ref{lemma:simp_z}. So if~$z_1,z_2\in\Cp$, then by Lemma~\ref{lemma:simp_z}
    the condition~$\Im\psi(z_1)>\Im\psi(z_2)$ induces~$|z_1|>|z_2|$,
    and~$\relpenalty 100\Im\psi(z_1)<\Im\psi(z_2)$ induces~$|z_1|<|z_2|$. As a consequence, the
    assertions~\eqref{item:all_vals_a} and~\eqref{item:all_vals_d} holds for non-real~$z_1,z_2$.

    The real part of~$\psi$ goes to~$\pm\infty$ on approaching a (nonzero) pole of~$\phi$, as
    stated in Remark~\ref{rem:psi_1_psi_2}. Consequently, it is impossible for a pole of~$\phi$
    to be a limiting point of the set
    \[
    \Gamma\colonequals\big\{re^{i\theta}\in\Cp:\Re\psi(re^{i\theta})=a,\ |z_1|<|z|<|z_2|\big\},
    \]
    so the function~$\psi$ is regular in a neighbourhood of~$\Gamma$. (Recall that~$z_1=0$ is
    allowed by the lemma's condition only if~$\psi$ is regular at the origin.) Analogously to
    the proof of Lemma~\ref{lemma:simp_z}, consider arbitrary consecutive (\textit{i.e.} there
    are no points of $\Gamma$ in between) arcs $\gamma_1$ and $\gamma_2$ such that
    $\sup_{z\in\gamma_1}|z|\equalscolon r_1\le r_2\colonequals\sup_{z\in\gamma_2}|z|$. Within
    these settings, the limits
    \[
    \zeta_1\colonequals \lim_{|z|\to r_1:\ z\in\gamma_1} z
    \quad\text{and}\quad
    \zeta_2\colonequals \lim_{|z|\to r_1:\ z\in\gamma_2} z
    \]
    exist, are real and of the same sign%
    % \footnote{\label{fn:lm1}This is induced by the condition that the arcs $\gamma_1$ and
    % $\gamma_2$ are
    % consecutive, see the proof of Lemma~\ref{lemma:simp_z}.}.
    \footnote{\label{fn:lm1}The arcs $\gamma_1$ and $\gamma_2$ are consecutive, thus one of the
        inequalities $\Re\psi(z)>a$ and~$\Re\psi(z)<a$ holds for all~$z\in\Cp$
        satisfying~$r_1<|z|<r_2$. The former inequality corresponds to the positive sign
        of~$\zeta_1,\zeta_2$, while the latter corresponds to the negative sign. See the proof
        of Lemma~\ref{lemma:simp_z} for the details.}.
    Let us show now that the function $\phi$ is regular on the line segment
    \(\mathfrak I\colonequals\big[\min\{\zeta_1,\zeta_2\},\max\{\zeta_1,\zeta_2\}\big].\)
    Indeed, in the case~$\zeta_1>0$ we have $\Re\psi(z)\to+\infty$ as~$z$ tends to a positive
    pole in contrast to $\Re\psi(z)<a$ on the whole~$\mathfrak I$.%
    \footnote{See footnote~\footref{fn:lm1}.} Similarly, $\Re\psi(z)\to-\infty$ for~$z$ tending
    to a negative pole, although $\Re\psi(z)>a$ when~$z\in \mathfrak I$ and~$\zeta_1<0$. Hence,
    the function~$\phi$ is regular on~$\mathfrak I$. By Lemma~\ref{lemma:phi_is_real},
    $\Im\psi(z)$ is constant on the segment~$\mathfrak I$ (this implies the equality
    in~\eqref{item:all_vals_d} for $z_1=\zeta_1\ne 0$ and $z_2=\zeta_2$). Summarizing, we obtain
    that the function~$\psi$ is regular on $\gamma_1\cup \mathfrak I \cup \gamma_2$, and
    $\Im\psi(z)$ is continuous and non-decreasing as~$|z|$ grows and thus attains all
    intermediate values. This reasoning is applicable for each pair of consecutive arcs
    constituting the set~$\Gamma$. That is, any two points $z_1,z_2\in\Gamma$ can be connected
    by a piecewise analytic curve of a finite length%
    \footnote{Poles of~$\phi$ can concentrate only at the origin and each interval between poles
        contains at most two ends of arcs from~$\Gamma$. That is,~$\Gamma$ contains a finite number
        of arcs. Each of the arcs has a finite length since~$\psi$ is smooth in a neighbourhood
        of~$\Gamma$. Therefore, the length of the curve is finite.},
    so that $\Im\psi$ is continuous and has a uniformly bounded growth on it. This implies the
    first two assertions of the lemma. Furthermore, we necessarily
    have~$\Im\psi(z_1)<\Im\psi(z_2)$ unless this piecewise analytic curve is a segment of the
    real line.

    It remains to check the case when~$z_1=0$. In this case,~$\psi(z)$ is regular at the origin,
    and thus (see the assertion~\ref{item:c_lemma2xx} of Lemma~\ref{lemma:phi_is_real_XX}) it is
    strictly increasing in some real interval enclosing~$z_1$. Then \eqref{eq:re_psi_ineq} shows
    that~$z_1$ is the end of some arc from~$\Gamma$. Choosing this arc as~$\gamma_1$ allows us
    to apply the previous part of the proof, and therefore yields~$\Im\psi(z_1)<\Im\psi(z_2)$.
\end{proof}

\begin{lemma} \label{lemma:equal_multiplicity}
    If~$f(z)$ is holomorphic at~$z_0$, $g(z)$ is holomorphic at~$f_0=f(z_0)$ such
    that~$g'(f_0)\ne 0$ and~$n$ is a natural number, then
    $f'(z_0)=f''(z_0)=\dots=f^{(n)}(z_0)=0$ if and only if
    \begin{equation}\label{eq:equal_multiplicity}
        \frac{dg(f(z))}{dz}\Big|_{z=z_0}
        =\frac{d^2g(f(z))}{dz^2}\Big|_{z=z_0}
        =\dots=\frac{d^ng(f(z))}{dz^n}\Big|_{z=z_0}
        =0.
    \end{equation}
    Analogously, if~$f(z)$ is holomorphic at~$z_0$ such that~$f'(z_0)\ne 0$ and~$g(z)$ is
    holomorphic at~$f_0=f(z_0)$, then the condition~\eqref{eq:equal_multiplicity} is equivalent
    to~$g'(f_0)=g''(f_0)=\dots=g^{(n)}(f_0)=0$.
\end{lemma}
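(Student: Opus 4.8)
The plan is to prove Lemma~\ref{lemma:equal_multiplicity} by a direct computation with the chain rule, organising the argument around the Fa\`a di Bruno structure of higher derivatives of a composition. First I would treat the first (and main) assertion, where $f'(z_0)=\dots=f^{(n)}(z_0)=0$ is to be matched with~\eqref{eq:equal_multiplicity}. For the ``only if'' direction, I would argue by induction on~$j=1,\dots,n$: the $j$-th derivative $\frac{d^j}{dz^j}g(f(z))$ expands, by Fa\`a di Bruno's formula, into a sum of terms each of which is a product of a derivative $g^{(m)}(f(z))$ with a monomial in $f'(z),\dots,f^{(j)}(z)$ in which at least one genuine derivative factor $f^{(i)}(z)$ with $1\le i\le j$ appears; evaluating at $z_0$ makes every such term vanish because each monomial contains a factor $f^{(i)}(z_0)=0$. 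The ``if'' direction is the part needing care: assuming~\eqref{eq:equal_multiplicity}, I would again induct on~$j$, using that at step~$j$ the expansion of $\frac{d^j}{dz^j}g(f(z))\big|_{z=z_0}$ contains exactly one term not already known to vanish by the inductive hypothesis (those involving only $f'(z_0),\dots,f^{(j-1)}(z_0)$, all zero), namely $g'(f_0)\,f^{(j)}(z_0)$; since $g'(f_0)\ne 0$, we conclude $f^{(j)}(z_0)=0$.

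A cleaner alternative I would actually prefer to write up is the order-of-vanishing viewpoint. Since $g'(f_0)\ne 0$, the function $w\mapsto g(w)-g(f_0)$ has a simple zero at $w=f_0$, hence $g(f(z))-g(f_0)$ and $f(z)-f_0$ vanish to exactly the same order at $z_0$ (write $g(w)-g(f_0)=(w-f_0)h(w)$ with $h$ holomorphic and $h(f_0)=g'(f_0)\ne 0$, so $g(f(z))-g(f_0)=(f(z)-f_0)\,h(f(z))$ with $h(f(z))$ holomorphic and nonzero at $z_0$). The condition $f'(z_0)=\dots=f^{(n)}(z_0)=0$ says precisely that the order of vanishing of $f(z)-f_0$ at $z_0$ is $\ge n+1$ (or $f\equiv f_0$), and by the product decomposition this is equivalent to the order of vanishing of $g(f(z))-g(f_0)$ being $\ge n+1$, which is exactly~\eqref{eq:equal_multiplicity}. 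This makes both directions simultaneous and avoids combinatorial bookkeeping.

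For the second assertion, where now $f'(z_0)\ne 0$ and $g$ is merely holomorphic at $f_0$, I would use the same order-of-vanishing principle with the roles reversed: $f'(z_0)\ne 0$ means $f(z)-f_0$ has a simple zero at $z_0$, so $z\mapsto f(z)-f_0$ is a local biholomorphism and $g(f(z))-g(f_0)$ vanishes at $z_0$ to exactly the same order as $g(w)-g(f_0)$ vanishes at $w=f_0$; that order being $\ge n+1$ is the statement $g'(f_0)=\dots=g^{(n)}(f_0)=0$, and it being $\ge n+1$ for $g(f(z))-g(f_0)$ is~\eqref{eq:equal_multiplicity}. (Equivalently, substitute the local inverse $z=f^{-1}(w)$, which has nonvanishing derivative, and apply the first part.)

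The only genuine subtlety — the ``main obstacle'', such as it is — is handling the degenerate cases cleanly: if $f$ is constant equal to $f_0$ near $z_0$, or if $g$ is constant near $f_0$, then $g\circ f$ is constant and both sides of the equivalence hold trivially, so these should be dismissed at the outset; and one must phrase ``order of vanishing $\ge n+1$'' so that it correctly encompasses the possibility that the relevant function is identically its value, matching the convention implicit in the lemma's hypothesis $f'(z_0)=\dots=f^{(n)}(z_0)=0$. Once those are set aside, everything reduces to the elementary fact that composing with a function having nonzero derivative at the relevant point multiplies the difference-from-value by a holomorphic unit and hence preserves order of vanishing; I expect the write-up to be short.
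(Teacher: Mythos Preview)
Your proposal is correct. Your first approach --- induction on the chain rule / Fa\`a di Bruno expansion, isolating the single surviving term $g'(f_0)f^{(j)}(z_0)$ (respectively $g^{(j)}(f_0)(f'(z_0))^j$) at each step --- is exactly what the paper does: it simply writes out the first few derivatives of $g\circ f$ via the chain rule, notes the triangular structure, and remarks that both assertions follow from solving this system sequentially. So on that count there is nothing to add.

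Your preferred order-of-vanishing argument, however, is a genuinely different packaging. Writing $g(w)-g(f_0)=(w-f_0)h(w)$ with $h(f_0)=g'(f_0)\ne 0$ and reading off that $g(f(z))-g(f_0)$ and $f(z)-f_0$ have the same vanishing order at $z_0$ (and symmetrically for the second assertion via the local inverse of $f$) is more conceptual: it treats both implications at once, avoids any inductive bookkeeping, and makes transparent why the hypotheses $g'(f_0)\ne 0$ respectively $f'(z_0)\ne 0$ are exactly what is needed. The paper's sequential chain-rule computation is perhaps more self-contained for a reader who just wants to verify the statement line by line, but your version better explains \emph{why} the lemma is true. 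Your handling of the degenerate constant cases is also appropriate and something the paper leaves implicit.
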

\begin{proof}
    Both facts follows from solving equations provided by the chain rule sequentially:
\begin{alignat*}{2}
    &\frac{dg(f(z))}{dz}\Big|_{z=z_0}&={}& g'(f_0) f'(z_0),
    \\
    &\frac{d^2g(f(z))}{dz^2}\Big|_{z=z_0}&={}& g''(f_0) (f'(z_0))^2 + g'(f_0) f''(z_0),
    \shortintertext{\centering
        $\cdots\cdots\cdots\cdots\cdots\cdots\cdots$
    }
    &\frac{d^ng(f(z))}{dz^n}\Big|_{z=z_0}&={}& g^{(n)}(f_0) (f'(z_0))^n + \dots + g'(f_0) f^{(n)}(z_0).
\end{alignat*}
\end{proof}

\begin{remark}
    Note that \(z(\ln V(z))'\) is a meromorphic function of the form~\eqref{eq:R_func} if
    \begin{equation}\label{eq:V_is_ln_psi}
        V(z)
        = e^{Az+C+\frac{A_0}{z}} z^{B}
        \frac{\prod_{\nu>0} \left(1+\frac{z}{a_\nu}\right)^{\kappa_\nu}}
        {\prod_{\mu>0} \left(1-\frac{z}{b_\mu}\right)^{\lambda_\mu}}
        \quad
        \text{considered as a single-valued function in }\Cp,
    \end{equation}
    where~\(C\in\mathbb{C}\), \,\(B\in\mathbb{R}\), \,\(A,A_0\ge 0\), and
    \(a_\nu,\kappa_\nu,b_\mu,\lambda_\mu\) are positive reals for all \(\nu,\mu\). The
    function~$V$, as well as the more general
    \begin{equation}\label{eq:W_is_V1_V2}
        W(z)
        = e^{Az+C+\frac{A_0}{z}} z^{B}
        \frac{\prod_{\nu>0} \left(1+\frac{z}{a_\nu}\right)^{\kappa_\nu}}
        {\prod_{\mu>0} \left(1-\frac{z}{b_\mu}\right)^{\lambda_\mu}}
        \frac{\prod_{\nu>0} \left(1+\frac{1}{zc_\nu}\right)^{\widetilde\kappa_\nu}}
        {\prod_{\mu>0} \left(1-\frac{1}{zd_\mu}\right)^{\widetilde\lambda_\mu}}
        = V_1(z)\cdot V_2\left(\frac 1z\right),
    \end{equation}
    where both functions $V_1$ and $V_2$ admit the representation~\eqref{eq:V_is_ln_psi}, then
    has an analytic continuation in a neighbourhood of each its real $\alpha$-point (excluding
    the origin). This allows us to determine the multiplicity of such $\alpha$-points. Another
    straightforward fact is that both~$V$ and~$W$ never vanish outside the real line.
\end{remark}

\begin{theorem}\label{th:W_alpha_pts}
    If a function \(W\) defined in~$\overline\Cp$ has the form~\eqref{eq:W_is_V1_V2} such that
    \(W(z)\not\equiv e^Cz^B\),
    % with at least one zero or pole,
    then for any \(\alpha\in\mathbb{C}\setminus\{0\}\) the \(\alpha\)-points of~\(W(z)\)
    inside~\(\Cp\) (if they exist) are simple and distinct in absolute value from all other
    solutions to~$|W(z)|=|\alpha|$ lying in~$\overline\Cp$. The \(\alpha\)-points on the real
    line (except at~$0$) may be either simple or double.
\end{theorem}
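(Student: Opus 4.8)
The plan is to reduce the statement of Theorem~\ref{th:W_alpha_pts} to the machinery of Lemma~\ref{lemma:simp_z} together with the real-line analysis of Lemmas~\ref{lemma:phi_is_real}--\ref{lemma:equal_multiplicity}, via the substitution $\psi(z)\colonequals\ln W(z)$. First I would observe that $W$ is nonvanishing off the real line (as noted in the remark preceding the theorem), so $\psi(z)=\ln W(z)$ is well defined as a single-valued holomorphic branch in $\Cp$; moreover $z\psi'(z)=z(\ln W(z))'$ has the form $\phi_1(z)-\phi_2(1/z)$ with $\phi_1,\phi_2$ of the form~\eqref{eq:R_func}, hence is an $\Rc$-function (this is exactly the setup of Lemma~\ref{lemma:all_vals}). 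The condition $W(z)\not\equiv e^Cz^B$ guarantees $\phi\not\equiv B$, so $\psi$ is non-constant and Lemma~\ref{lemma:simp_z} applies: $\psi$ is univalent in $\Cp$.

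Next I would translate the $\alpha$-point statement into a statement about level curves of $\psi$. Writing $\alpha=|\alpha|e^{i\beta}$, the equation $W(z)=\alpha$ reads $\Re\psi(z)=\ln|\alpha|$ and $\Im\psi(z)\in\beta+2\pi\mathbb{Z}$, while $|W(z)|=|\alpha|$ reads simply $\Re\psi(z)=\ln|\alpha|\equalscolon a$. Two distinct solutions $z_1,z_2\in\overline\Cp$ of $|W(z)|=|\alpha|$ with the same absolute value would both lie on $\Gamma\colonequals\{z\in\overline\Cp:\Re\psi(z)=a\}$; I would invoke Lemma~\ref{lemma:all_vals} (items~\eqref{item:all_vals_a} and~\eqref{item:all_vals_d}) to conclude that if $|z_1|=|z_2|$ then $\Im\psi(z_1)=\Im\psi(z_2)$ forces $z_1,z_2\in\mathbb{R}$ with $z_1z_2\ge0$ — so no two such solutions can occur strictly inside $\Cp$, and in particular an interior $\alpha$-point is distinct in absolute value from every other solution of $|W(z)|=|\alpha|$ in $\overline\Cp$. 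Simplicity of interior $\alpha$-points is then immediate: a multiple $\alpha$-point $z_0\in\Cp$ would mean $\psi'(z_0)=0$, i.e. $\phi(z_0)=0$, contradicting $\phi(\Cp)\subset\Cp$ (alternatively, it contradicts the univalence of $\psi$ from Lemma~\ref{lemma:simp_z} via Lemma~\ref{lemma:equal_multiplicity} with $g=\exp$).

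For the real-line part I would argue that $\phi$ is meromorphic near $\mathbb{R}\setminus\{0\}$ by the form~\eqref{eq:W_is_V1_V2}, so $\psi$ extends analytically across each real $\alpha$-point $x_0\ne0$; a real $\alpha$-point of multiplicity $n$ corresponds, via Lemma~\ref{lemma:equal_multiplicity} applied to $W=\exp\circ\psi$, to a zero of $\psi'$ of order $n-1$ at $x_0$, i.e. to a zero of $\phi(z)=z\psi'(z)$ of order $n-1$ there. By Lemma~\ref{lemma:no_tri_z}, $\phi$ has no double zeros on $\mathbb{R}\setminus\{0\}$, hence no real $\alpha$-point can have multiplicity $\ge3$; this is exactly the claimed "at most double." The main obstacle I anticipate is the bookkeeping at the origin and at the (possibly accumulating) poles of $\phi$: one must be careful that $z_1=0$ is only admitted when $\psi$ is regular there, and that Lemma~\ref{lemma:all_vals} already packages the behaviour near clustered poles — so the real work is just checking that all hypotheses of that lemma are in force for $\psi=\ln W$, after which the three conclusions fall out mechanically.
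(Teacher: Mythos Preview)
Your overall strategy matches the paper's proof almost exactly: set $\psi=\ln W$, use that $\phi(z)=z\psi'(z)$ is an $\Rc$-function to get simplicity in~$\Cp$ via $\phi(\Cp)\subset\Cp\implies\psi'\ne0$, and then combine Lemma~\ref{lemma:equal_multiplicity} with Lemma~\ref{lemma:no_tri_z} for the ``at most double'' on the real line. Those parts are fine.

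The one genuine gap is in your argument for ``distinct in absolute value''. You propose to invoke Lemma~\ref{lemma:all_vals}\eqref{item:all_vals_a},\eqref{item:all_vals_d}, but that lemma is stated under the strict hypothesis $|z_1|<|z_2|$; it says nothing about two distinct points with the \emph{same} modulus, and your sentence ``if $|z_1|=|z_2|$ then $\Im\psi(z_1)=\Im\psi(z_2)$ forces $z_1,z_2\in\mathbb{R}$'' is not a consequence of those items. The paper does not go through Lemma~\ref{lemma:all_vals} here at all. Instead it uses the elementary monotonicity inequality~\eqref{eq:Re_dec} from the proof of Lemma~\ref{lemma:simp_z}, rewritten as~\eqref{eq:re_psi_ineq} (equivalently $|W(-|z|)|<|W(z)|<|W(|z|)|$ for $\Im z>0$): since $\Re\psi(re^{i\theta})$ is strictly decreasing in $\theta\in[0,\pi]$ for each fixed $r$, a semicircle $\{|z|=r\}\cap\overline\Cp$ can meet the level set $\{\Re\psi=a\}$ in at most one point, and if that point is in $\Cp$ it cannot be either endpoint $\pm r$. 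That single line gives exactly what you need and is the missing ingredient; once you swap your Lemma~\ref{lemma:all_vals} citation for the direct use of~\eqref{eq:Re_dec}/\eqref{eq:re_psi_ineq}, your proof coincides with the paper's.
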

\begin{remark}
    If~$W(z)$ is regular and nonzero for~$z=0$, then it has the form~\eqref{eq:V_is_ln_psi}
    with~$A_0=B=0$. Therefore, the equality~$W(0)=\alpha\ne 0$ necessarily gives
    $(\ln W)'(0)\ne 0$ by the case~\eqref{item:c_lemma2xx} of Lemma~\ref{lemma:phi_is_real_XX},
    that is the $\alpha$-point~$z=0$ can be only simple.
\end{remark}
\begin{proof}[Proof of Theorem~\ref{th:W_alpha_pts}]
    For~$z\psi'(z)=\phi(z)=z(\ln W (z))'$ and~$z\in\Cp$
    \[
    \phi(z)\ne 0 \implies \psi'(z)\ne 0 \implies W'(z)=\frac{\psi'(z)}{\psi(z)}\ne 0,
    \]
    which gives us that the non-real~$\alpha$-points are simple. If~$W(z)=\alpha$ then
    \begin{equation}\label{eq:abs_W_ineq}
        |W(-|z|)|<|W(z)|=|\alpha|<|W(|z|)|
        \quad
        \text{unless}
        \quad
        z=\pm|z|,
    \end{equation}
    which is in fact~\eqref{eq:re_psi_ineq}. Put in other words, solutions to~$|W(z)|=|\alpha|$
    (which include all~$\alpha$-points) in~$\overline\Cp$ have distinct absolute values. By
    Lemma~\ref{lemma:equal_multiplicity}, $\alpha$-points of~$W$ and $(\ln\alpha)$-points
    of~$\ln W$ (with the same branch of logarithm) have equal multiplicities.
    Lemma~\ref{lemma:no_tri_z} hence justify that the multiplicity of real~$\ln\alpha$-points
    of~$\psi$ is at most~$2$.
\end{proof}

\begin{theorem}\label{th:W_alpha_pts_order}
    Under the assumptions of Theorem~\ref{th:W_alpha_pts}, if~$|z_1|<|z_2|$, \ $W(z_1)=\alpha$
    and $W(z_2)=\alpha e^{i\theta}$ with a real~$\theta>0$, then for every
    \(\varrho\in(0,\theta)\) there
    exists~$z_*\in C_{12}\colonequals\{z\in\overline\Cp:|z_1|<|z|<|z_2|\}$ such
    that~\(W(z_*)=\alpha e^{i\varrho}\), unless simultaneously~\makebox{$\theta=0\pmod{2\pi k}$},
    both~$z_1$ and~$z_2$ are real of the same sign and~\(|W(z)|\ne|\alpha|\) in the
    semi-annulus~$C_{12}$.% (holding simultaneously).
\end{theorem}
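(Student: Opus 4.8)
The entire statement is going to follow from Lemma~\ref{lemma:all_vals} applied to $\psi(z):=\ln W(z)$, so the plan is to recast the hypotheses of the theorem into the form required by that lemma, invoke it, and read off both the conclusion and the exceptional case. First I would fix a holomorphic branch of $\psi=\ln W$. By the Remark preceding Theorem~\ref{th:W_alpha_pts}, together with the factorisation $W=V_1(z)\,V_2(1/z)$ from~\eqref{eq:W_is_V1_V2} with $V_1,V_2$ of the form~\eqref{eq:V_is_ln_psi}, one has $z\psi'(z)=z(\ln W)'(z)=\phi_1(z)-\phi_2(1/z)$, where $\phi_1(w):=w(\ln V_1)'(w)$ and $\phi_2(w):=w(\ln V_2)'(w)$ are both meromorphic of the form~\eqref{eq:R_func} (a short chain-rule computation gives the sign in front of $\phi_2(1/z)$); moreover the hypothesis $W\not\equiv e^{C}z^{B}$ of Theorem~\ref{th:W_alpha_pts} makes $\phi:=\phi_1(z)-\phi_2(1/z)$ non-constant. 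Hence $\psi$ is precisely a smooth branch of $\int\frac{\phi_1(z)-\phi_2(1/z)}{z}\,dz$ with $W=e^{\psi}$ (the integration constant absorbed into the branch), so Lemma~\ref{lemma:all_vals} is available for $\psi$. Because $\alpha\ne 0$, the points $z_1$ (where $W=\alpha$) and $z_2$ (where $W=\alpha e^{i\theta}$) lie in $\overline{\Cp}$ and $W$ is holomorphic and non-vanishing at each of them, so both are regular for $\psi$; in the borderline situation $z_1=0$ this uses the Remark after Theorem~\ref{th:W_alpha_pts}, which says $W(0)\ne 0$ forces $A_0=B=0$.

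Then I would normalise the branch so that $\psi(z_1)=\ln|\alpha|+i\arg\alpha$ for a chosen value $\arg\alpha$, and set $\theta:=\Im\psi(z_2)-\Im\psi(z_1)$; this is the $\theta$ meant in the statement, since $W(z_2)=e^{\psi(z_2)}=|\alpha|\,e^{i(\arg\alpha+\theta)}=\alpha e^{i\theta}$. From $|W(z_2)|=|\alpha|$ we get $\Re\psi(z_2)=\ln|\alpha|=\Re\psi(z_1)=:a$, so $z_1,z_2$ fulfil the hypotheses of Lemma~\ref{lemma:all_vals} ($|z_1|<|z_2|$, equal real parts, both regular). Assertion~\eqref{item:all_vals_a} then gives $\theta\ge 0$, while assertion~\eqref{item:all_vals_d} tells us that $\theta=0$ occurs exactly when $z_1,z_2\in\mathbb{R}$ with $z_1z_2\ge 0$ and $|W(z)|\ne|\alpha|$ (equivalently $\Re\psi(z)\ne a$) for every $z\in\overline{\Cp}$ with $|z_1|<|z|<|z_2|$ --- which is precisely the exceptional clause of the theorem (there $e^{i\theta}=1$, i.e.\ $\theta\equiv 0\pmod{2\pi}$). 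So assume $\theta>0$. For any $\varrho\in(0,\theta)$ the number $\arg\alpha+\varrho$ lies in $\bigl(\Im\psi(z_1),\Im\psi(z_2)\bigr)$, hence the second assertion of Lemma~\ref{lemma:all_vals} yields a point $z_*\in\overline{\Cp}$ with $|z_1|<|z_*|<|z_2|$ --- that is, $z_*\in C_{12}$ --- and $\psi(z_*)=a+i(\arg\alpha+\varrho)$; exponentiating, $W(z_*)=|\alpha|\,e^{i\arg\alpha}e^{i\varrho}=\alpha e^{i\varrho}$, which is the assertion of the theorem.

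The one delicate point in this plan is the branch-of-logarithm bookkeeping: one must be sure that the $\theta$ in ``$W(z_2)=\alpha e^{i\theta}$'' is the genuine increment $\Im\psi(z_2)-\Im\psi(z_1)$ of $\arg W$ from $z_1$ to $z_2$, and not that value shifted by a nonzero multiple of $2\pi$; otherwise $(0,\theta)$ need not sit inside $\bigl(\Im\psi(z_1),\Im\psi(z_2)\bigr)$ and the application of Lemma~\ref{lemma:all_vals} could overshoot. This is why I would \emph{define} $\theta$ via the curve from $z_1$ to $z_2$ provided by assertion~\eqref{item:all_vals_c} of Lemma~\ref{lemma:all_vals}, along which $\Im\psi$ is monotone in $|z|$ by~\eqref{item:all_vals_a}; that monotonicity forces $\theta\ge 0$ and pins down the degenerate case $\theta=0$, after which the proof is a direct quotation of Lemma~\ref{lemma:all_vals} needing no computation.
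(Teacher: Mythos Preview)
Your proposal is correct and follows exactly the paper's approach: the paper's proof is a two-sentence remark that Theorem~\ref{th:W_alpha_pts_order} is a straightforward corollary of Lemma~\ref{lemma:all_vals} with $\psi$ taken as a branch of $\ln W$, the only care needed being the $2\pi$-periodicity of the exponential. You have simply unpacked those two sentences --- verifying that $z(\ln W)'=\phi_1(z)-\phi_2(1/z)$ so Lemma~\ref{lemma:all_vals} applies, matching $\Re\psi=\ln|\alpha|$ and $\Im\psi$ to the hypotheses, and flagging that the ``correct'' $\theta$ is the genuine increment $\Im\psi(z_2)-\Im\psi(z_1)$ along the curve of item~\eqref{item:all_vals_c} rather than an arbitrary representative modulo~$2\pi$ --- which is precisely the caveat the paper records.
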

\begin{proof}
    This is a straightforward corollary of Lemma~\ref{lemma:all_vals} for~$\psi(z)$ being a
    branch of~$\ln W (z)$. We only need to account that the exponential function
    maps~$\alpha+2\pi n$ for all integer~$n$ to the same point~$e^\alpha$.
\end{proof}

If the $\alpha$-set of~$W$ is not empty, then $\alpha$-points of~$W$ are assumed to be numbered
according to the growth of their absolute values,
\textit{i.e.}~\( \cdots\le|z_0|\le|z_1|\le|z_2|\le\cdots \) and
$W(z)=\alpha \iff z\in\bigcup_k z_k$. At that, each multiple $\alpha$-point we count only once.
In the sequel we consider only the case of~$C=0$: otherwise, the equality~$W(z)=\alpha$ can be
replaced with~$W(z)e^{-C}=\alpha e^{-C}$.

\begin{theorem}\label{th:main_B}
    Let~$W(z)$ be of the form~\eqref{eq:W_is_V1_V2} with natural $\kappa_\nu$,
    $\widetilde\kappa_\nu$, $\lambda_\mu$, $\widetilde\lambda_\mu$ and~$C=0$. Choose the branch of~$z^B$
    which is smooth in~$\overline\Cp\setminus\{0\}$ and positive for~$z>0$. Given a complex
    number~\(\alpha\notin\mathbb R\) such that $\alpha e^{\pm iB\pi}\notin\mathbb R$, each
    $\alpha$-point of~\(W(z)\) in~$\mathbb{C}\setminus\mathbb{R}$ is simple and distinct in
    absolute value from other~$\alpha$-points. If~$z_i,z_{i+1}$ are two consecutive points of
    the~$\alpha$-set, then~\(\Im z_i\cdot\Im z_{i+1}<0\).

    Moreover, the equations~$W(x)=\alpha$
    and~$W_\pm(-x)\colonequals \lim_{y\to \pm0}W(-x+iy)=\alpha$ have no solution for~$x>0$.
\end{theorem}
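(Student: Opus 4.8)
The plan is to reduce both assertions to the half‑plane results already available, exploiting the reflection symmetry of $W$. Write $W(z)=z^{B}\Omega(z)$, where $\Omega$ collects the exponential and the (finite or infinite) products of~\eqref{eq:W_is_V1_V2}; since all data of $\Omega$ are real we have $\overline{\Omega(z)}=\Omega(\bar z)$, and the prescribed branch of $z^{B}$ is the principal one (cut along $(-\infty,0]$), so $\overline{z^{B}}=\bar z^{B}$ off the negative axis. Hence $W(z)=\overline{W(\bar z)}$ for $\Im z<0$, so that an $\alpha$‑point $z$ of $W$ with $\Im z<0$ is exactly a $\bar\alpha$‑point $\bar z$ of $W$ with $\bar z\in\Cp$, of the same multiplicity and the same modulus. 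Since $\bar\alpha\notin\mathbb R$ and $\bar\alpha e^{\pm iB\pi}=\overline{\alpha e^{\mp iB\pi}}\notin\mathbb R$, Theorem~\ref{th:W_alpha_pts} is applicable both to $\alpha$ and to $\bar\alpha$.

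First I would settle the last sentence, which is the easy part. For $x>0$ the value $W(x)=x^{B}\Omega(x)$ is real, hence different from $\alpha$. For $x>0$ the two boundary limits are $W_{\pm}(-x)=x^{B}e^{\pm iB\pi}\Omega(-x)$ with $\Omega(-x)\in\mathbb R$; were such a limit equal to $\alpha$, we would get $\alpha e^{\mp iB\pi}=x^{B}\Omega(-x)\in\mathbb R$, contradicting the hypothesis. Together with the observation that $W$ is never $\alpha$‑valued at the origin (inspect~\eqref{eq:W_is_V1_V2}), this shows that $W$ has no $\alpha$‑points on $\mathbb R$ at all, so every $\alpha$‑point lies in $\mathbb C\setminus\mathbb R$.

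For the first sentence, Theorem~\ref{th:W_alpha_pts} applied to $\alpha$ and to $\bar\alpha$ gives that the $\alpha$‑points and the $\bar\alpha$‑points of $W$ inside $\Cp$ are simple; reflecting, the $\alpha$‑points in the open lower half‑plane are simple too, so all $\alpha$‑points of $W$ in $\mathbb C\setminus\mathbb R$ are simple. The same theorem asserts that each $\alpha$‑point of $W$ in $\Cp$ is distinct in modulus from every other solution of $|W(z)|=|\alpha|$ in $\overline\Cp$; applying this to $\alpha$ and to $\bar\alpha$ (recall $|\bar\alpha|=|\alpha|$) and using that reflection preserves moduli, all $\alpha$‑points of $W$ in $\mathbb C\setminus\mathbb R$ have pairwise distinct moduli. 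In particular the $\alpha$‑set is totally ordered by modulus, so ``consecutive'' makes sense. For the interlacing of half‑planes I argue by contradiction: suppose two consecutive $\alpha$‑points $z_{i},z_{i+1}$ lie in the same open half‑plane; after replacing $\alpha$ by $\bar\alpha$ if necessary we may assume $z_{i},z_{i+1}\in\Cp$. Fix a holomorphic branch $\psi$ of $\ln W$ in $\Cp$; by the Remark preceding Theorem~\ref{th:W_alpha_pts}, $z\psi'(z)$ has the form $\phi_{1}(z)-\phi_{2}(1/z)$ required by Lemma~\ref{lemma:all_vals}. Here $\Re\psi(z_{i})=\Re\psi(z_{i+1})=\ln|\alpha|$ and $\Im\psi(z_{j})=\Arg\alpha+2\pi n_{j}$ with integers $n_{i},n_{i+1}$, and $|z_{i}|<|z_{i+1}|$ forces $\Im\psi(z_{i})<\Im\psi(z_{i+1})$ by Lemma~\ref{lemma:simp_z}, i.e.\ $n_{i+1}\ge n_{i}+1$. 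The open interval $\bigl(n_{i}+\tfrac{\Arg\alpha}{\pi},\,n_{i+1}+\tfrac{\Arg\alpha}{\pi}\bigr)$ has length $n_{i+1}-n_{i}\ge 1$ and, because $\alpha\notin\mathbb R$, its endpoints are not integers; hence it contains an integer $m$. Then $\varrho\colonequals-\Arg\alpha+2\pi m$ lies strictly between $\Im\psi(z_{i})$ and $\Im\psi(z_{i+1})$, so Lemma~\ref{lemma:all_vals} yields $z_{*}\in\overline\Cp$ with $|z_{i}|<|z_{*}|<|z_{i+1}|$ and $\psi(z_{*})=\ln|\alpha|+i\varrho$, that is $W(z_{*})=\bar\alpha$. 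By the second paragraph $z_{*}$ cannot be real, so $z_{*}\in\Cp$; then $\overline{z_{*}}$ is an $\alpha$‑point of $W$ with $|z_{i}|<|\overline{z_{*}}|<|z_{i+1}|$, contradicting the consecutiveness of $z_{i},z_{i+1}$. This proves $\Im z_{i}\cdot\Im z_{i+1}<0$.

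The main obstacle is not a deep estimate but the bookkeeping: fixing the extension of the branch of $z^{B}$ so that the reflection identity $W(z)=\overline{W(\bar z)}$ holds, and making sure the point $z_{*}$ delivered by Lemma~\ref{lemma:all_vals} does not land on the real axis — this is exactly where the extra hypothesis $\alpha e^{\pm iB\pi}\notin\mathbb R$ (beyond $\alpha\notin\mathbb R$) is consumed, through the computation of $W_{\pm}(-x)$. The only genuinely arithmetic ingredient is the remark that an open interval of length at least $1$ with non‑integer endpoints must contain an integer, which is where $\alpha\notin\mathbb R$ enters the interlacing argument.
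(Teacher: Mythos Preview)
Your proof is correct and follows essentially the same strategy as the paper: use the reflection identity $W(\bar z)=\overline{W(z)}$ to trade $\alpha$-points in the lower half-plane for $\bar\alpha$-points in~$\Cp$, rule out real solutions via the boundary values $W_\pm(-x)$, invoke Theorem~\ref{th:W_alpha_pts} for simplicity and distinctness of moduli, and then sandwich a $\bar\alpha$-point between any two consecutive $\alpha$-points in~$\Cp$. The only cosmetic difference is that the paper packages the last step through Theorem~\ref{th:W_alpha_pts_order} (applied with $\theta=2\pi$ and $\varrho=2\pi-2\Arg\alpha$), whereas you unpack that theorem and appeal directly to Lemma~\ref{lemma:simp_z} and Lemma~\ref{lemma:all_vals} together with the elementary observation that an open interval of length~$\ge 1$ with non-integer endpoints contains an integer; the content is the same.
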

\setstretch{1.15}
Note that in the case of integer~$B$, the conditions~$\alpha e^{\pm iB\pi}\notin\mathbb R$
and~$\alpha\notin\mathbb R$ of this theorem are equivalent; furthermore, the
function~$W(x)$ is defined for~$x<0$ and equal to~$W_-(x)=W_+(x)$.
\begin{proof}
    On the one hand, for~$x>0$ the functions~$W(x)$, \ $e^{-iB\pi}W_+(-x)$
    and~$e^{iB\pi}W_-(-x)$ are real. On the other hand, both~$\alpha$ and~$\alpha e^{\pm iB\pi}$
    are non-real. Therefore, there is no solution to~$W(x)=\alpha$ and to~$W_\pm(-x)=\alpha$
    when~$x>0$. Since $W(\overline z)=\overline{W(z)}$, we can find the solutions
    to~\(W(z)=\alpha\) in the rest part of the complex plane~$\mathbb{C}\setminus\mathbb{R}$
    from the equations $W(z)=\alpha$ and $W(z)=\overline{\alpha}$ in the upper half-plane.

    Now assume that the argument of~$W$ varies in~\Cp. Theorem~\ref{th:W_alpha_pts} implies that
    all~$\alpha$-points (as well as all~$\overline\alpha$-points) of the function~$W$ are simple
    and distinct in absolute value. Furthermore, according to~\eqref{eq:abs_W_ineq} absolute
    values of $\alpha$-points and of~$\overline\alpha$-points cannot coincide (due
    to~$\alpha\ne\overline\alpha$). On account of~$\overline\alpha=\alpha e^{-2i\arg\alpha}$,
    Theorem~\ref{th:W_alpha_pts_order} (with the setting~$\theta=2\pi$) induces that if we have
    two solutions~$z_i,z_{i+k}$ to $W(z)=\alpha$, then there is a solution~$z_*$
    to~$W(z)=\overline\alpha$ such that~$|z_i|<|z_*|<|z_{i+k}|$. Conversely, between each pair
    of~$\overline\alpha$-points there is an~$\alpha$-point by the same theorem. That is, the
    absolute values of $\alpha$- and $\overline\alpha$-points interlace each other. This fact
    provides the theorem.
\end{proof}

\begin{remark}\label{rem:main_B2}
    If in Theorem~\ref{th:main_B} we take the number~\(\alpha\ne 0\) real, then the
    equations~$W(z)=\alpha$ and $W(\overline z)=\alpha$ are satisfied simultaneously. As a
    result, each $\alpha$-point of~\(W(z)\) in~$\mathbb{C}\setminus\mathbb{R}$ is simple and
    there is a unique $\alpha$-point with the matching absolute value (which is the complex
    conjugate). For an~$\alpha$-point~$z_i$ on the real line (such points are positive
    unless~$B$ is integer) there are only the following possibilities.
    \begin{compactenum}[\upshape(a)]
    \item \label{item:main_B2a} %
        The point~$z_i$ belongs to an interval between two consecutive positive poles or
        negative zeros of~$W$. If~$z_i$ is double, then the interval contains no
        other~$\alpha$-points of~$W$. If~$z_i$ is simple, then the interval contains exactly one
        another~$\alpha$-point: either~$z_{i-1}$ or~$z_{i+1}$.
    \item \label{item:main_B2b} %
        The point~$z_i$ belongs to an interval between the origin and the maximal negative zero
        or the minimal positive pole. Then exactly one another~$\alpha$-point (if~$z_i$ is
        simple) or no other $\alpha$-points (if~$z_i$ is double) lies on the same interval
        provided that~$A_0>0$ or~$Bz_i<0$ in~\eqref{eq:W_is_V1_V2}. If~$A_0=0$ and~$Bz_i\ge 0$,
        then~$z_i$ is the~$\alpha$-point minimal in absolute value and the same interval
        contains no other $\alpha$-points.
    \item \label{item:main_B2c} %
        The point~$z_i$ lies on a ray of the real line, in which~$W$ has no poles or zeros. Then
        this ray contains at most one another~$\alpha$-point of~$W$. If~$A_0=0$, \ $Bz_i\ge 0$
        and one end of this ray is the origin, then~$z_i$ is the only $\alpha$-point on the ray
        and its absolute value is minimal among all solutions to~$W(z)=\alpha$.
    \end{compactenum}
    All assertions stated here are straightforward corollaries to Lemma~\ref{lemma:phi_is_real},
    Lemma~\ref{lemma:phi_is_real_XX} and Remark~\ref{rem:prop-a-poi-infinite}. The~$\alpha$-set
    of~$W$ for $\alpha e^{\pm iB\pi}\in\mathbb R$ and~$B\notin\mathbb{Z}$ can be studied
    similarly; the main distinction is that~$W$ is not continuous on the negative semi-axis, so
    the corresponding result will be concerned with the limiting values~$W_+$ or~$W_-$.
\end{remark}

\begin{remark}
    Functions of the form~\eqref{eq:V_is_ln_psi} are meromorphic exactly when the
    exponents~$\kappa_\nu$, $\lambda_\mu$ are positive integers,~$A_0=0$ and~$B\in\mathbb{Z}$.
    These functions generate infinite totally positive sequences. Functions of the
    form~\eqref{eq:W_is_V1_V2} generate doubly infinite totally positive sequences if
    ~$\kappa_\nu, \lambda_\mu, \widetilde\kappa_\nu, \widetilde\lambda_\mu\in\mathbb{N}$ and
    $B\in\mathbb{Z}$. See the subsection ``Definitions'' of Section~\ref{sec:introduction} for
    the further details.%  Other results concerning such functions from the same point of view are
    % presented in~\cite{TyaglovDyachenko}.
\end{remark}

Hereinafter we concentrate on the case $B=\frac pk$ of~\eqref{eq:W_is_V1_V2} with positive
integers $\kappa_\nu$, $\widetilde\kappa_\nu$, $\lambda_\mu$, $\widetilde\lambda_\mu$, integer~$k\ge 2$
and~$p\ne 0$. % In other words, we assume that~$W(z^k)$ is a single-valued function
% meromorphic in~$\mathbb{C}\setminus\{0\}$.
We assume % that~\(k\) is natural and~\(p\) is integer such
that \(\gcd(|p|,k)=1\), \textit{i.e.}
the fraction~\(\frac pk\) is irreducible. The $k$th root is a $k$-valued holomorphic function on the
punctured plane~$\mathbb{C}\setminus\{0\}$. So, let $\sqrt[k]{{}\cdot{}}$ denote its branch that
is holomorphic in~$\overline\Cp\setminus\{0\}$ and maps positive semi-axis into itself. Then
\begin{equation} \label{eq:def_R_w}
    R(w)
    =(\sqrt[k]{w})^{p} e^{Aw+A_0w^{-1}}\,
    {\prod_{\nu>0} \left(1+\frac{w}{a_\nu}\right)}
    {\prod_{\nu>0} \left(1+\frac{1}{w c_\nu}\right)} \bigg/ 
    {\prod_{\mu>0} \left(1-\frac{w}{b_\mu}\right)}
    {\prod_{\mu>0} \left(1-\frac{1}{w d_\mu}\right)},
\end{equation}
in which the coefficients satisfy \(A,A_0\ge 0\) and \(a_\nu,b_\mu,c_\nu,d_\mu>0\) for
all~\(\nu,\mu\), is a single-valued meromorphic function in~$\overline\Cp\setminus\{0\}$ regular
for~$\Im w\ne 0$.

%\setstretch{1.16}
\section[Composition with \texorpdfstring{$k$th}{k-th} power function]{Composition with
    \texorpdfstring{$k$\hspace{.5pt}th}{kth} power function}
\label{sec:main-theorems}
In the current section we assume that a function \(G\not\equiv z^p\) has the representation
\begin{equation}\label{eq:G_mth}
    G(z)
    \colonequals e^{Az^k+A_0z^{-k}} z^{p}
        \frac{\prod_{\nu>0} \left(1+\frac{z^k}{a_\nu}\right)}
             {\prod_{\mu>0} \left(1-\frac{z^k}{b_\mu}\right)}
        \frac{\prod_{\nu>0} \left(1+\frac{z^{-k}}{c_\nu}\right)}
             {\prod_{\mu>0} \left(1-\frac{z^{-k}}{d_\mu}\right)}
\end{equation}
for some natural~\(k\ge 2\) and integer~\(p\), \(\gcd(|p|,k)=1\), in which the coefficients
satisfy \(A,A_0\ge 0\) and \(a_\nu,b_\mu,c_\nu,d_\mu>0\) for all~\(\nu,\mu\). As we noted above,
the case when $|p|$ and $k$ are not coprime does not need any additional study: it can be
treated by introducing the new variable~\(\eta\colonequals z^{1/\gcd(|p|,k)}\). Furthermore,
the location of zeros and poles of~$G(z)$ is clear from the expression~\eqref{eq:G_mth}, so we
concentrate on the equation~$G(z)=\alpha$ where~$\alpha\in\mathbb{C}\setminus\{0\}$.
    
For the sake of brevity denote~$e_m\colonequals\exp\left(i\frac{m}{k}\pi\right)$. The condition
\(\gcd(|p|,k)=1\) implies that
\begin{compactitem}
\item $(e_{mp})_{m=-k}^{k-1}$ is a cyclic group of order~$2k$ generated by~$e_p$ when~\(p\) is
    odd (thus $e_{mp} = e_{n}$ for $n\in\mathbb{Z}$ if and only if $mp\equiv n\pmod{2k}$);
\item $(e_{mp})_{m=0}^{k-1}$ and $(e_{mp+1})_{m=0}^{k-1}$ are two disjoint cyclic groups of
    order~$k$ generated by~$e_p$ when~\(p\) is even (the former group contains~$e_0=1$ and the
    latter one contains~$e_k=-1$).
\end{compactitem}

Denote the sectors of the complex plane with the central angle $\frac{\pi}{k}$ by
\[
    Q_s\colonequals
      \left\{z\in\mathbb{C}\setminus\{0\}:0<\Arg ze_{-s}<\frac{\pi}{k}\right\},
    \quad
    \widetilde Q_s\colonequals
      \left\{z\in\mathbb{C}\setminus\{0\}:0\le\Arg ze_{-s}<\frac{\pi}{k}\right\},
    \quad s\in\mathbb{Z},
\]
so that they are numbered in a anticlockwise direction and~$Q_s=Q_{2k+s}$, \
$\widetilde{Q}_s=\widetilde{Q}_{2k+s}$.

The equation $G(z)=\alpha$ with a fixed~$\alpha$ is equivalent to
$G(\widetilde ze_{2m})=G(\widetilde z)e_{2pm}=\alpha$ where~$m\in\mathbb{Z}$, which gives us the
following fact. (Note that we suppress the trivial case of~$G(z)$ identically equal to~$z^p$
with no special attention).
\begin{remark} \label{rem:stretch-sect-nth-power}%
    Let~$G(z)$ and~$R(w)$ be as in~\eqref{eq:G_mth} and~\eqref{eq:def_R_w}, respectively,
    $\alpha\ne 0$ and~$w\in\Cp\cup\{z>0\}$. Substituting~$z = \sqrt[k]{w}e_{2m}$
    into~\eqref{eq:G_mth} shows that if
    \begin{equation}\label{eq:for_R_w}
        R(w) = \alpha e_{-2pm},\ww m=0,\dots,k-1,
    \end{equation}
    then~$z = \sqrt[k]{w}e_{2m}\in\widetilde Q_{2m}$ solves the equation~$G(z)=\alpha$.
    Analogously, if the equality
    \begin{equation}\label{eq:for_R_w_1}
        R(w) = \overline\alpha e_{2pm},\ww m=0,\dots,k-1,
    \end{equation}
    holds for some~$m$, then~$z = \sqrt[k]{\overline w}e_{2m}\in\widetilde Q_{2m-1}$
    solves~$G(z)=\alpha$. Conversely, for each solution of~$G(z)=\alpha$ there exists an integer
    $m$ (unique under the condition~$0\le m<k$) such that~$R(z^k) = \alpha e_{-2pm}$ on
    condition that~$z^k\in\Cp\cup\{z>0\}$, or~$R(\overline z^k) = \overline\alpha e_{2pm}$ on
    condition that~$z^k\notin\Cp\cup\{z>0\}$. In this sense, the equation~$G(z)=\alpha$ can be
    replaced with the relation
    \begin{equation}\label{eq:for_R_w_2}
        R(w)\in\Omega,\ww
       \Omega\colonequals\{\alpha e_{-2pm}\}_{m=0}^{k-1}\cup\{\overline{\alpha}e_{2pm}\}_{m=0}^{k-1}
    \end{equation}
    for~$w\in\overline\Cp$, and then all~$\alpha$-points of~$G(z)$ can be determined from the
    solutions to~\eqref{eq:for_R_w_2}.
\end{remark}

\begin{remark}\label{rem:2_cases}
    \textls[10]{The relation~\eqref{eq:for_R_w_2} shows that the equation~$G(z)=\alpha$ has
        different properties depending}
    whether~$\Im\alpha^k$ is zero or not. The case
    of~$\overline{\alpha}\in\{\alpha e_{-2pm}\}_{m=0}^{k-1}$ is equivalent
    to~$\Im\alpha e_{ps}=0$ for some~$\relpenalty 100 s=0, \dots, k-1$, and hence
    to~$\Im\alpha^k = 0$. If it occurs, then the change of variable~$z\mapsto\zeta e_{s}$ in the
    equation~$G(z)=\alpha$ produces the real equation~$G(\zeta)=\alpha e_{ps}$, which has
    solutions symmetric with respect to the real line. Consequently, each solution
    to~$G(z)=\alpha$ has a pair: the reflected solution~$\overline z e_{-2s}$ with the same
    absolute value (unless they coincide). In the case
    of~$\overline{\alpha}\notin\{\alpha e_{-2pm}\}_{m=0}^{k-1}$, which is equivalent to
    $\Im\alpha^k \ne 0$, the relation~\eqref{eq:for_R_w_2} has no real solutions, and solutions
    to~\eqref{eq:for_R_w} and~\eqref{eq:for_R_w_1} have distinct absolute values, as is shown in
    Theorem~\ref{th:main_B}. Accordingly, all solutions of~$G(z)=\alpha$ are distinct in
    absolute value.

    We examine these cases in detail in Theorem~\ref{th:main2} and Theorem~\ref{th:main},
    respectively.
\end{remark}
\setstretch{1.15}

\begin{definition}
    Denote by~$\Xi$ the set of absolute values of all solutions to~$G(z)=\alpha$ with~$G$ of the
    form~\eqref{eq:G_mth}, that is
    \[\Xi\colonequals
    \{ \xi>0: \exists\theta\in(-\pi,\pi]
    \text{ such that }G(\xi e^{i\theta})=\alpha\}.
    \]
    Let~$\cdots<\xi_{i}<\xi_{i+1}<\cdots$ be the entries of~$\Xi$, such
    that~$\Xi=\{\xi_i\}_i$, and let $\dots,z_i,z_{i+1},\dots$ be the corresponding
    $\alpha$-points or, more precisely, $|z_i|=\xi_i$ and~$G(z_i)=\alpha$ for all~$i$ (that is,
    $z_i$ stands for any of the $\alpha$-points which correspond to the value of~$\xi_i$).
\end{definition}

\begin{theorem}\label{th:main}
    If~\(\Im\alpha^k\ne 0\), then for each~$i$ the $\alpha$-point~$z_i$ is simple,
    satisfies~$\Im z_i^k\ne 0$ and distinct in absolute value from other~$\alpha$-points of~$G$
    (\textit{i.e.} $G(z)=\alpha \text{ and } |z|=|z_i|\implies z=z_i$).

    If $z_i,z_{i+1}$ are two consecutive~$\alpha$-points, then the
    inclusions~$\alpha\in Q_{2q-\varkappa}$ and~$z_i\in Q_{2m-\sigma}$ with~\(q,m\in\mathbb{Z}\)
    and~\(\varkappa,\sigma\in\{0,1\}\) imply that~$z_{i+1}\in Q_{2l-1+\sigma}$, where~\(l\) is
    an integer solution of \ \(p(l+m)\equiv 2q+1-\varkappa-\sigma\pmod{k}\).
\end{theorem}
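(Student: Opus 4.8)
The plan is to push everything over to the meromorphic function $R$ of~\eqref{eq:def_R_w} and run the machinery already set up for it. First, by Remark~\ref{rem:stretch-sect-nth-power} the $\alpha$-points of $G$ correspond bijectively to the solutions $w\in\overline\Cp$ of $R(w)\in\Omega$, where $\Omega\colonequals\{\alpha e_{-2pm}\}_{m=0}^{k-1}\cup\{\overline\alpha e_{2pm}\}_{m=0}^{k-1}$; since $\gcd(|p|,k)=1$, each displayed family equals $\alpha$ (resp.\ $\overline\alpha$) times all $k$th roots of unity. I would observe that $\Im\alpha^k\ne0$, i.e.\ $\alpha^k\notin\mathbb R$, makes the two families disjoint, so $|\Omega|=2k$, and that raising to the $k$th power shows none of $\beta,\,\beta e_p,\,\beta e_{-p}$ is real for $\beta\in\Omega$ (each would force $\pm\alpha^k\in\mathbb R$). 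Arguing exactly as in the proof of Theorem~\ref{th:main_B} --- $R$ is real on the positive semi-axis and $e_{\pm p}$ times a real function on the negative one --- this rules out $\beta$-points of $R$ on $\mathbb R$, so every $\beta$-point with $\beta\in\Omega$ lies in $\Cp$. Theorem~\ref{th:W_alpha_pts} then makes these $\beta$-points simple and, as they all satisfy $|R|=|\alpha|$, pairwise distinct in absolute value across all of $\Omega$.

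Next I would pull this back through $w=z^k$ on the even sectors and $w=\overline z^{\,k}$ on the odd ones (using $G(\overline z)=\overline{G(z)}$). The associated $w$ lies in $\Cp$, whence $\Im z_i^{\,k}\ne0$; since $z\mapsto z^k$ is a local biholomorphism at $z_i\ne0$ (precomposed with conjugation in the odd case, which preserves orders of zeros), Lemma~\ref{lemma:equal_multiplicity} carries simplicity from $w$ down to $z_i$; and if $G(z)=\alpha$ with $|z|=|z_i|$, the $w$ attached to $z$ has the same modulus as the one attached to $z_i$, hence equals it --- equality of the two $\Omega$-values then keeps $z$ and $z_i$ in sectors of the same parity (the families being disjoint), so $z^k=z_i^{\,k}$ and, from $G(z)=G(z_i)$, $z^p=z_i^{\,p}$, and $\gcd(|p|,k)=1$ gives $z=z_i$. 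This disposes of the first assertion.

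For the interlacing formula, I would list all $\beta$-points of $R$ (over $\beta\in\Omega$) by increasing modulus --- legitimate by the distinctness above --- which reproduces the ordering $|z_0|<|z_1|<\cdots$. For consecutive $\alpha$-points $z_i,z_{i+1}$ with associated $w_i,w_{i+1}\in\Cp$, write $R(w_i)=\beta_i$, $R(w_{i+1})=\beta_ie^{i\theta}$. Along the level set $|R|=|\alpha|$ the argument of $R$ is strictly increasing in $|w|$ (Lemma~\ref{lemma:all_vals}, using $w_i,w_{i+1}\notin\mathbb R$), so $\theta>0$, and Theorem~\ref{th:W_alpha_pts_order} forbids any element of $\Omega$ from having argument strictly between $\arg\beta_i$ and $\arg\beta_i+\theta$ (else $R$ would attain it in $\{|w_i|<|w|<|w_{i+1}|\}$, inserting an $\alpha$-point between $z_i$ and $z_{i+1}$). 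Hence $\theta$ is the gap from $\arg\beta_i$ to the next point of $\arg\Omega$. As the $2k$ arguments of $\Omega$ form two $\tfrac{2\pi}{k}$-periodic progressions offset by $2\arg\alpha\not\equiv0\pmod{2\pi/k}$, they strictly interlace between the two families, so $\beta_{i+1}$ lies in the family opposite to $\beta_i$ and $\theta<\tfrac{2\pi}{k}$; in particular the sector of $z_{i+1}$ has parity opposite to that of $z_i$, which is the shape $Q_{2l-1+\sigma}$. Finally, writing $\arg\alpha=\tfrac{(2q-\varkappa)\pi}{k}+\theta_0$ with $\theta_0\in(0,\tfrac{\pi}{k})$ and substituting the arguments of $\beta_i,\beta_{i+1}$ read off from $z_i\in Q_{2m-\sigma}$, $z_{i+1}\in Q_{2l-1+\sigma}$, a short computation of $\theta\pmod{2\pi}$ combined with $\theta\in(0,\tfrac{2\pi}{k})$ and $2\theta_0\in(0,\tfrac{2\pi}{k})$ leaves $p(l+m)\equiv 2q+1-\varkappa-\sigma\pmod k$ as the only possibility.

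The analytic substance --- simplicity of $\beta$-points, distinctness of their moduli, and the ``no skipped argument'' property --- is already packaged in Theorems~\ref{th:W_alpha_pts}--\ref{th:W_alpha_pts_order} and Lemma~\ref{lemma:all_vals}, so the real work, and the main obstacle, is the bookkeeping in the last paragraph: tracking which of the two families $\beta_i$ and $\beta_{i+1}$ belong to (equivalently, whether $e_{-2pm}$ or $e_{2pm}$ appears), how the index of the sector containing $z_i$ translates into the argument of $w_i$ through the chosen branch of $\sqrt[k]{{}\cdot{}}$, and making sure all the parity and sign choices align to produce the stated normalization $2q+1-\varkappa-\sigma$ rather than something off by a constant.
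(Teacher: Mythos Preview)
Your proposal is correct and follows essentially the same route as the paper: reduce to $R(w)\in\Omega$ in~$\overline\Cp$ via Remark~\ref{rem:stretch-sect-nth-power}, use the argument of~$R$ on the real line to exclude real solutions, invoke Theorem~\ref{th:W_alpha_pts} for simplicity and distinctness in modulus, and then apply Theorem~\ref{th:W_alpha_pts_order} to force $R(w_{i+1})$ to hit the next element of~$\Omega$ in argument. The only cosmetic difference is that the paper derives the congruence by an explicit case split on $\varkappa\in\{0,1\}$ and matches sector indices directly, whereas you phrase the same step as the strict interlacing of the two $\tfrac{2\pi}{k}$-periodic families $\{\arg\alpha+\tfrac{2\pi j}{k}\}$ and $\{-\arg\alpha+\tfrac{2\pi j}{k}\}$; both lead to the same bookkeeping, which---as you correctly flag---is where all the care is needed.
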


\begin{proof}
    % First note that the point~$\alpha e_{-2pm}$ run through all points of the
    % set~$\{\alpha e_{2m}\}_{m=0}^{k-1}$ when~$m$ takes the values~$0,\dots,k-1$. The elements of
    % this set, in their turn, distributed uniformly on the circle of the radius~$|\alpha|$
    % centred at the origin. That is, if we put elements of~$\{\alpha e_{-2pm}\}_{m=0}^{k-1}$ in
    % order according to their arguments, then the argument of the adjacent elements will differ
    % by~$2\frac mk \pi$. The same is true about the
    % set~$\{\overline\alpha e_{2pm}\}_{m=0}^{k-1}$.
    % Since~$\overline\alpha\notin \{\alpha e_{-2pm}\}_{m=0}^{k-1}$, for each integer~$m$
    % between~$\alpha e_{2m-2}$ and~$\alpha e_{2m}$ there exists exactly one point
    % of~$\{\overline\alpha e_{2pm}\}_{m=0}^{k-1}$.
    The expression~\eqref{eq:def_R_w} for real~$w$ yields
    that~$\Arg R(w)\in\{0,\pi,\pm\frac{p}{k}\pi\}$, and hence $R(w)\notin\Omega$.
    Consequently, all solutions to~\eqref{eq:for_R_w_2} lie in the open upper half-plane~\Cp.
    That is $G(z)\ne\alpha$ for~$\Im z^k=0$. The
    function~$R(w)$ satisfies the conditions of Theorem~\ref{th:W_alpha_pts}, thus solutions
    to~$R(w)\in\Omega$ in~\Cp\ are simple and (since~$|R(w)|=|\alpha|$ is necessary
    for~$R(w)\in\Omega$) distinct in absolute value. Therefore, all~$\alpha$-points of~$G$ are
    simple by Lemma~\ref{lemma:equal_multiplicity} and distinct in absolute value: if~$G(z)=\alpha$
    and~$|z|=|z_i|$ for some~$i$, then~$z=z_i$.

    Now let~$|z_{i}|,|z_{i+1}|\in\Xi$. Take integer numbers~$q,m,l=0,\dots, k-1$
    and~$\varkappa,\sigma,\tau=0,1$ so that~$\alpha\in Q_{2q-\varkappa}$, \
    $z_i\in Q_{2m-\sigma}$ and~$z_{i+1}\in Q_{2l-\tau}$. Then the points~$z_i\in Q_{2m}$
    and~$z_{i+1}\in Q_{2l}$ correspond to solutions of~\eqref{eq:for_R_w},
    while~$z_i\in Q_{2m-1}$ and~$z_{i+1}\in Q_{2l-1}$ correspond to solutions
    of~\eqref{eq:for_R_w_1}.

    First, assume that~$\Im\alpha^k>0$, \textit{i.e.}~$\varkappa=0$ and~$\alpha\in Q_{2q}$. Then
    the points $\alpha e_{-2pm}\in Q_{2q-2pm}$ of the set~$\Omega$ occur exactly once in each
    sector~$Q_j$ with the even indices~$j=0,2,\dots,2k-2$ when~$m$ runs over the integers
    $0,\dots,k-1$. Analogously, the points $\overline\alpha e_{2pm}\in Q_{-2q-1+2pm}$ of the
    set~$\Omega$ occur exactly once in each sector~$Q_j$ with the odd indices~$j=1,3,\dots,2k-1$ when
    $m=0,\dots,k-1$. Consequently,~$\sigma=0$ induces the equation
    $R(w_i)=\alpha e_{-2pm}\in Q_{2q-2pm}$, while~$\sigma=1$ induces
    $R(w_i)=\overline\alpha e_{-2pm}\in Q_{-2q-1+2pm}$. Combining these equalities together
    gives
    \begin{equation}\label{eq:loc_w_i}
        R(w_i)\in\Omega\cap Q_{(-1)^{\sigma}((2q+\sigma)-2pm)}
    \end{equation}
    the same reasoning for $w_{i+1}$ provides us with the condition
    \begin{equation}\label{eq:loc_w_ip1}
        R(w_{i+1})\in\Omega\cap Q_{(-1)^{\tau}((2q+\tau)-2pl)}.
    \end{equation}  
    Since $R(w_{i+1})=R(w_i) e^{i\theta}$ with an appropriate real~$\theta$, for
    all~$\rho\in(0,\theta)$ the quantity~$R(w_i) e^{i\varrho}$ cannot belong to~$\Omega$ by
    Theorem~\ref{th:W_alpha_pts_order}. However,~$\Omega$ has exactly one point in each sector
    of the complex plane, so we necessarily
    have~$R(w_{i+1})\in\Omega\cap Q_{(-1)^{\sigma}((2q+\sigma)-2pm)+1}$ from~\eqref{eq:loc_w_i}.
    Thus, on account of the relation~\eqref{eq:loc_w_ip1},
    \[
    (-1)^{\tau}((2q+\tau)-2pl)\equiv(-1)^{\sigma}((2q+\sigma)-2pm)+1 \pmod{2k}.
    \]
    Checking the parity immediately gives~$\tau=1-\sigma$. As a result,
    \[
    \begin{aligned}
        \sigma=0&\implies& (2q+1)-2pl &\equiv -(2q-2pm+1) = 2q+1-2(1+2q-pm) \pmod{2k}
        \\
        \text{and}\quad
        \sigma=1&\implies& 2q-2pl &\equiv -(2q+1-2pm)+1 = 2q-2(2q-pm) \pmod{2k}.
    \end{aligned}
    \]
    These relations imply that $2pl\equiv 2((1-\sigma)+2q-pm) \pmod {2k}$ or equivalently
    $p(l+m)\equiv 2q + 1-\sigma \pmod k$.

    \setstretch{1.25}
    \begin{figure}[H]
        \centering
        \refstepcounter{figure}\label{fig:1}\addtocounter{figure}{-1}
        \begin{overpic}[width=0.93\columnwidth]{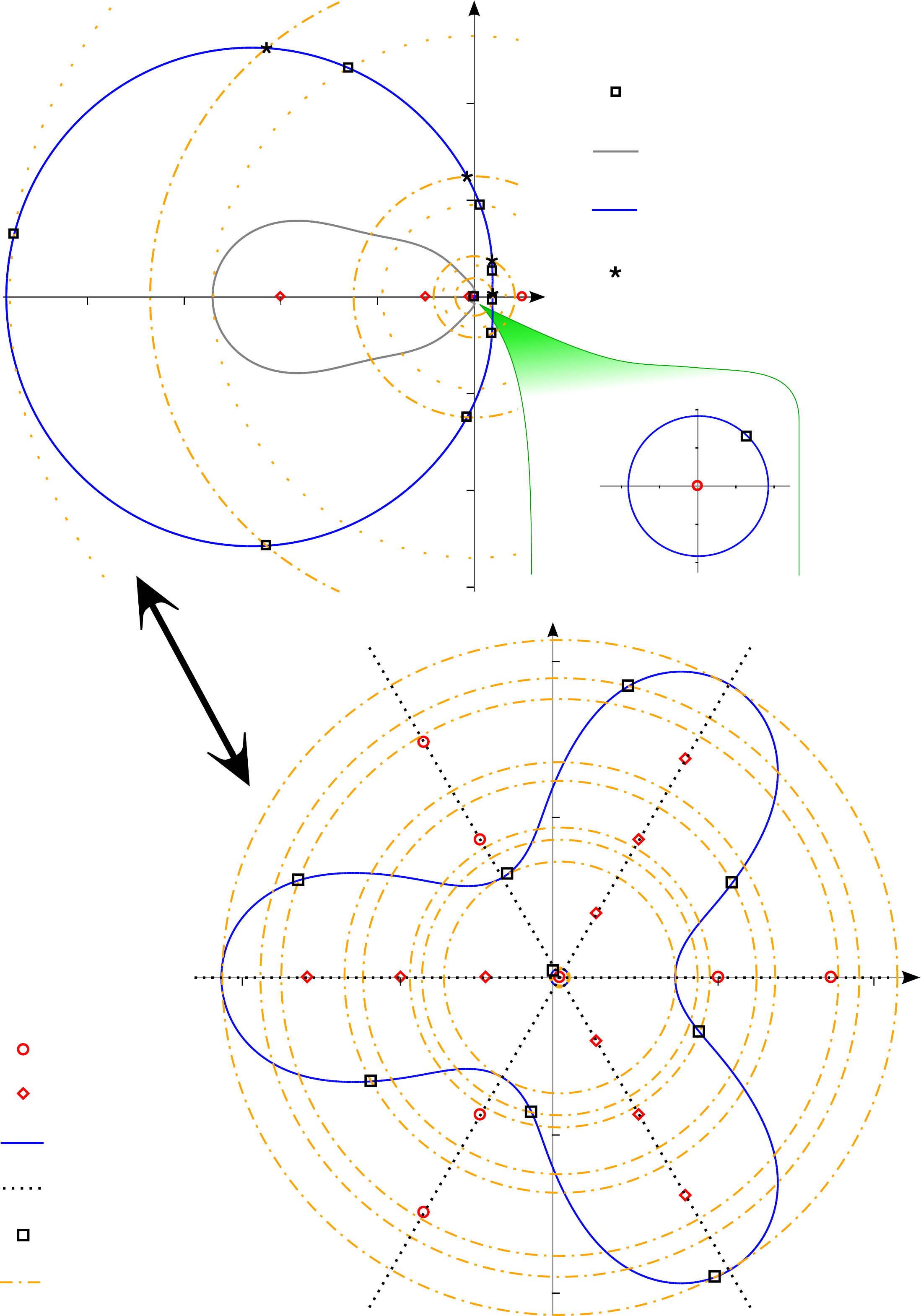}
            \put(055,755){$-8$}
            \put(128.3,755){$-6$}
            \put(201.7,755){$-4$}
            \put(275,755){$-2$}

            \put(341,916){$4$}
            \put(341,842.6){$2$}
            % \put(341,769.3){$0$}
            \put(330,696){$-2$}
            \put(330,622.7){$-4$}
            \put(330,549.3){$-6$}
            
            \put(22,785){$\Re w$}
            \put(371,987){$\Im w$}

            \put(158,493){{\LARGE$w=z^3$}}
            \put(401,695){\fontsize{9pt}{0pt}\selectfont\textls[-15]{Enlargement of the neighbourhood}}
            \put(401,679){\fontsize{9pt}{0pt}\selectfont of the origin}
            \put(492,656){\scriptsize $0.0001$}
            \put(482,598){\scriptsize $-0.0001$}
            \put(482,567){\scriptsize $-0.0002$}
            \put(545,618){\scriptsize $0.0001$}
            \put(480,618){\scriptsize $-0.0001$}
            \put(430,618){\scriptsize $-0.0002$}

            \put(450,960){Legend:}
            \put(495,925)
            {$\frac{f(w)}{\sqrt[3]{w}\cdot g(w)}=\alpha+e_m$}
            \put(495,880){$\left|\frac{f(w)}{\sqrt[3]{w}\cdot g(w)}\right|=\frac{|\alpha|}{5}$}
            \put(495,835){$\left|\frac{f(w)}{\sqrt[3]{w}\cdot g(w)}\right|=|\alpha|$}
            \put(495,789)
            {$\frac{f(w)}{\sqrt[3]{w}\cdot g(w)}=\overline\alpha +e_m$, \
                $\Im w>0$}
            \put(450,748){$e_m\colonequals\exp\frac{2\pi m i}{3}$ \ and \ $m=0,1,2$}
            
            \put(171,237){$-2$}
            \put(291,237){$-1$}
            \put(440,240){$0$}
            \put(541,237){$1$}
            \put(661,237){$2$}
            \put(430,493){$2$}
            \put(430,374){$1$}
            \put(425,128){$-1$}
            \put(425,008){$-2$}

            \put(670,270){$\Re z$}
            \put(382,518){$\Im z$}
            
            \put(0,235){Legend:}
            \put(42,197){$f(z^3)=0$}
            \put(42,164){$zg(z^3)=0$}
            \put(42,126){$\left|\frac{f(z^3)}{z g(z^3)}\right|=|\alpha|$}
            \put(42,92){$\big\{z:ze_m\in\mathbb{R}\big\}$}
            \put(42,57){$\frac{f(w)}{\sqrt[3]{w}\cdot g(w)}=\alpha$}
            \put(42,21){$\big\{z:\exists\theta\in[0,2\pi) \ \ F(z\exp{i\theta})\!{}={}\!\alpha\big\}$}
            % \put(445,750){(Here~$e_m=\exp\frac{2\pi m i}{3}$ and $m=0,1,2$.)}
            
        \end{overpic}
        \caption{Illustration to Theorems~\ref{th:main}, \ref{th:main2},~\ref{th:meromorphic_pos_p} and~\ref{th:meromorphic_neg_p}.}
        {%\vskip .3em 
            The~$\alpha$-points of the
            function~\(F(z)=\dfrac{f(z^3)}{zg(z^3)}=\dfrac{(z^3+0.1)(z^3+1)(z^3+4)}{z(z^3-1)(z^3-5)}\vphantom{\Bigg|}\),
            \ $\alpha = -1-i$, are presented in the bottom %\\[2pt]
            graph. The plot of the corresponding intermediary function~$R(w)$ is in the top
            graph. The $\alpha$-points of~$F(z)$ %\\[1pt]
            coincide with zeros of the polynomial
            \(z^9+(1+i)z^7+ 5.1 z^6 -6(1+i)z^4+ 4.5 z^3+ 5(1+i)z+0.4\).}
    \end{figure}\eject

    \setstretch{1.3}
    Now let $\Im\alpha^k<0$ --- that is to say, $\varkappa=1$ and~$\alpha\in Q_{2q-1}$, so
    consequently~$\alpha e_{-2pm}\in Q_{2q-1-2pm}$ and~$\overline\alpha e_{2pm}\in Q_{-2q+2pm}$.
    It implies that
    \begin{align*}
      R(w_i)&\in\Omega\cap Q_{(-1)^{\sigma}(2q-(1-\sigma)-2pm)}\an\\
      R(w_{i+1})&\in\Omega\cap Q_{(-1)^{\tau}(2q-(1-\tau)-2pl)}
                  =\Omega\cap Q_{(-1)^{\sigma}(2q-(1-\sigma)-2pm)+1}
    \end{align*}
    analogously to the case of positive~$\Im\alpha^k$. Due to the parity, we
    have~$\tau=1-\sigma$, and thus
    \[
    \begin{aligned}
        \sigma=0&\implies& -(2q-2pl) &\equiv 2q-1-2pm+1 = 2q-2pm \pmod{2k}
        \\
        \text{and}\quad
        \sigma=1&\implies& 2q-1-2pl &\equiv -(2q-2pm)+1 = -2q+2pm+1 \pmod{2k}.
    \end{aligned}
    \]
    The last two equations are equivalent to $2pl\equiv 4q-2\sigma-2pm\pmod{2k}$, which
    coincides with~$\relpenalty 1000 p(l+m)\equiv 2q-\sigma\pmod{k}$.
\end{proof}

\begin{remark}\label{sec:corr_between_rays_and_sectors}
    The rays of the line $\left\{z\in\mathbb{C}:\Im ze_s=0\right\}$, which is given
    by~$z=\overline ze_{-2s}$, can be expressed via the sectors~$Q_i$ of the complex plane by
    the formula
    \begin{equation}\label{eq:Q_es}
        \overline Q_{2m}\cap \overline Q_{-2s-2m-1}\setminus\{0\}=
        \begin{cases}
            \{z\in\mathbb{C}:ze_s>0\},&\text{if }
            m\equiv-\left\lceil \tfrac s2\right\rceil\pmod{k},\\
            \{z\in\mathbb{C}:ze_s<0\},&\text{if }
            m\equiv-\big\lceil \tfrac {s-k}2\big\rceil\pmod{k},\\
            \varnothing & \text{otherwise};
        \end{cases}
    \end{equation}
    the notation~$\lceil a\rceil$ stands for the minimal integer which is greater or equal
    to a real number~$a$.
\end{remark}

\begin{theorem}\label{th:main2}
    Let~$\Im\alpha^k=0$, \ \(\alpha\ne 0\) and the integers~$s,l$ be such
    that~$\Im\alpha e_{ps}=0$ and~$p(m-l)\equiv 1\pmod{k}$.
    \begin{enumerate}[\upshape(a)]
    \item \label{item:th_main2_a} %
        A point~$z$ satisfies the conditions $G(z)=\alpha$ and $|z|=|z_i|$ if and only
        if~$z\in\{z_i,z_i^*\}$, where~$z_i^*\colonequals\overline z_ie_{-2s}$.
        % For each~$i$, the point $z_i^*\colonequals\overline z_ie_{-2s}$ is an $\alpha$-point
        % of~$G$; $z_i$ and~$z_{i+1}$ are distinct in absolute value from other~$\alpha$-points
        % (\textit{i.e.} $G(z)=\alpha \text{ and } |z|=|z_i|\implies z\in\{z_i,z_i^*\}$).
    \item \label{item:th_main2_b} %
        The inclusion~$z_i\in Q_{2m}\cup Q_{-2s-2m-1}$ for some integer~$m$ implies that both
        $z_i^*\ne z_i$ are simple $\alpha$-points and
        $z_{i+1}\in \overline Q_{2l}\cup \overline Q_{-2s-2l-1}$ (when $|z_{i+1}|\in\Xi$).
    \item \label{item:th_main2_c} %
        The conditions~$z_i^*=z_i$ and~$\arg z_{i}=\arg z_{i+1}$ imply that both~$z_i$,
        $z_{i+1}$ are simple,~$\arg z_{i}\ne\arg z_{i-1}$ provided that~$|z_{i-1}|\in\Xi$
        and~$\arg z_{i+1}\ne \arg z_{i+2}$ provided that~$|z_{i+2}|\in\Xi$.
    \item \label{item:th_main2_d} %
        If $z_i^*=z_i$ and $\arg z_{i}\ne\arg z_{i+1}$, then $z_i$ is simple or double (which
        corresponds to, respectively, $\arg z_{i}=\arg z_{i-1}$ or $\arg z_{i}\ne\arg z_{i-1}$
        on condition that~$|z_{i-1}|\in\Xi$). Furthermore,
        $z_i\in \overline Q_{2m}\cap \overline Q_{-2s-2m-1}$ with~$m$ given by~\eqref{eq:Q_es}
        implies~$z_{i+1}\in \overline Q_{2l}\cup \overline Q_{-2s-2l-1}$.
    \item \label{item:th_main2_e} %
        If~$z_i^*=z_i$ and~$|z_{i+1}|\notin\Xi$, then the multiplicity of~$z_i$ is at most~$2$.
    \end{enumerate}        
\end{theorem}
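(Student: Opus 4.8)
The plan is to imitate the proof of Theorem~\ref{th:main}: substitute $w=z^k$ to turn $G(z)=\alpha$ into the relation $R(w)\in\Omega$ with $w\in\overline\Cp$ and $R$ the intermediary function~\eqref{eq:def_R_w}, and then read off each of the five assertions from the behaviour of $R$ on the closed half-plane, the new feature relative to Theorem~\ref{th:main} being that the target set $\Omega$ now degenerates. First I would record its shape. The hypothesis $\Im\alpha^k=0$ is equivalent to $\overline\alpha/\alpha\in\mu_k$ (the group of $k$-th roots of unity), hence $\{\alpha e_{-2pm}\}_{m=0}^{k-1}=\alpha\mu_k=\{\overline\alpha e_{2pm}\}_{m=0}^{k-1}$ and $\Omega=\alpha\mu_k$ is a set of exactly $k$ points, all of modulus $|\alpha|$, invariant under complex conjugation and lying on the rays $\arg w\equiv\arg\alpha\pmod{2\pi/k}$ --- that is, on every second boundary ray of the sectors $Q_j$. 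As $R$ has the form~\eqref{eq:W_is_V1_V2} with $B=\tfrac pk$, I may apply Theorems~\ref{th:W_alpha_pts} and~\ref{th:W_alpha_pts_order}, Lemma~\ref{lemma:no_tri_z}, and Lemmas~\ref{lemma:phi_is_real}--\ref{lemma:all_vals} to $R$, writing $\psi:=\ln R$ and $\phi:=z(\ln R)'$. By Remark~\ref{rem:stretch-sect-nth-power} the equation $G(z)=\alpha$ is equivalent to $R(w)\in\Omega$, $w\in\overline\Cp$; over a non-real $w\in\Cp$ the fibre of this correspondence is $\{z,z^*\}$ with $z^*:=\overline z\,e_{-2s}$ the reflection of $z$ in $\{z:\Im ze_s=0\}$, realised by the two readings $R(w)=\alpha e_{-2pm}$ (giving $z\in\widetilde Q_{2m}$) and $R(w)=\overline\alpha e_{2pm'}$, $m'\equiv-m-s\pmod k$ (giving $z^*\in\widetilde Q_{-2s-2m-1}$, using $\overline\alpha=\alpha e_{2ps}$); this is where the sectors appearing in~\eqref{item:th_main2_b}--\eqref{item:th_main2_d} come from, and the case of real $w$ is folded into the analysis of~\eqref{item:th_main2_c}--\eqref{item:th_main2_e}.

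For~\eqref{item:th_main2_a}: by Remark~\ref{rem:2_cases}, $z_i^*=\overline{z_i}e_{-2s}$ is again an $\alpha$-point with $|z_i^*|=|z_i|$, so it lies in the fibre over $\xi_i$. Conversely, any $z$ with $G(z)=\alpha$ and $|z|=\xi_i$ produces $w\in\overline\Cp$ with $|R(w)|=|\alpha|$ and $|w|=\xi_i^k$; by Theorem~\ref{th:W_alpha_pts} such $w$ is unique unless it is real, and the possibility that both real half-axes carry a $w$ of the common modulus $\xi_i^k$ with value in $\Omega$ is excluded using~\eqref{eq:re_psi_ineq} together with the real-line analysis of Section~\ref{sec:case-meromorphic-phi} (Lemmas~\ref{lemma:phi_is_real}--\ref{lemma:phi_is_real_XX}). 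Hence the fibre over $\xi_i$ is exactly $\{z_i,z_i^*\}$, and the remaining assertions split according to whether $z_i$ lies off the reflection line or on it, that is, whether $z_i^*\ne z_i$ or $z_i^*=z_i$.

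If $z_i$ lies in an open sector $Q_\bullet$ then its associated $w$ (namely $z_i^k$ or $\overline{z_i}^k$, whichever is in $\overline\Cp$) lies in the open half-plane $\Cp$, so by Theorem~\ref{th:W_alpha_pts} it is a simple value-point of $R$, whence $z_i$ is a simple $\alpha$-point of $G$ by Lemma~\ref{lemma:equal_multiplicity} and $z_i^*\ne z_i$. Locating $z_{i+1}$ then repeats the sector computation of Theorem~\ref{th:main} almost verbatim: along the level arc $\{\Re\psi=\ln|\alpha|\}$ the function $\Im\psi=\arg R$ increases with $|w|$ (Lemma~\ref{lemma:simp_z}) and assumes every intermediate value (Theorem~\ref{th:W_alpha_pts_order}), so $R(w_{i+1})$ must be the next point of $\Omega$; since consecutive points of $\Omega$ differ by the factor $e_2$, writing $R(w_i)=\alpha e_{-2pm}$ yields $R(w_{i+1})=\alpha e_{-2pl}$ with $p(m-l)\equiv1\pmod k$, so $z_{i+1}\in\overline Q_{2l}$ and $z_{i+1}^*\in\overline Q_{-2s-2l-1}$ (closed sectors, because $w_{i+1}$ may be real). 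This proves~\eqref{item:th_main2_b}.

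When $z_i^*=z_i$, the associated $w$ is real, and on each component of $\mathbb R\setminus\{0\}$ between consecutive poles of $\phi$ the real function $\Re\psi$ has exactly one extremum by Lemmas~\ref{lemma:phi_is_real} and~\ref{lemma:phi_is_real_XX} (with Remark~\ref{rem:prop-a-poi-infinite} for the unbounded components). Thus $\Re\psi=\ln|\alpha|$ has at most two roots there, simple unless they merge into the extremum; a double $\alpha$-point of $G$ therefore corresponds to a simple zero of $\phi$ on the real line, and a triple one is impossible by Lemma~\ref{lemma:no_tri_z} --- this is~\eqref{item:th_main2_e}. Whether two consecutive $\alpha$-points $z_i,z_{i+1}$ lie on the same ray amounts, on the $R$-side, to whether the corresponding real $w_i,w_{i+1}$ are the two roots flanking one extremum (same ray, case~\eqref{item:th_main2_c}) or sit in different components or on different half-axes (different rays, case~\eqref{item:th_main2_d}); the ``simple or double'' alternative for $z_i$ is read off from whether $z_{i-1}$ shares the ray, just as in Remark~\ref{rem:main_B2}, and the inclusions ``$z_{i+1}\in\overline Q_{2l}\cup\overline Q_{-2s-2l-1}$'' follow by rewriting ``$z$ lies on $\{z:\Im ze_s=0\}$'' through the identity~\eqref{eq:Q_es} of Remark~\ref{sec:corr_between_rays_and_sectors}. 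Throughout, the main obstacle is bookkeeping rather than analysis: one must keep aligned the $k$ points of $\Omega$, the two residue labels $\alpha e_{-2pm}$ and $\overline\alpha e_{2pm}$, the pair of sectors $Q_{2m},Q_{-2s-2m-1}$ flanking each ray of the reflection line, and the rays of that line, all the while carrying the modular identity $p(m-l)\equiv1\pmod k$; the one genuinely analytic step is the exclusion, in~\eqref{item:th_main2_a}, of two real $w$-solutions of equal modulus, which rests on the precise picture of $\Re\psi$ and $\Im\psi$ on the real half-axes supplied in Section~\ref{sec:case-meromorphic-phi}.
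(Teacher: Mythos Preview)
Your overall architecture matches the paper's: pass to $w=z^k$, study $R(w)\in\Omega$ on $\overline\Cp$, and split according to whether the relevant $w$ is non-real (part~\eqref{item:th_main2_b}) or real (parts~\eqref{item:th_main2_c}--\eqref{item:th_main2_e}). The treatment of~\eqref{item:th_main2_a}, \eqref{item:th_main2_b} and~\eqref{item:th_main2_e} is essentially the paper's.

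There is, however, a genuine gap in your handling of~\eqref{item:th_main2_c} and~\eqref{item:th_main2_d}. You write that ``two consecutive $\alpha$-points $z_i,z_{i+1}$ lie on the same ray'' amounts to ``$w_i,w_{i+1}$ are the two roots flanking one extremum'', and that the alternative ``different components or different half-axes'' gives different rays. But two real $w$-values on the \emph{same} half-axis separated by a singularity of $\ln R$ (a pole or a zero of $R$) still lie on the same ray after pulling back by $z\mapsto z^k$. Your extremum picture (Lemmas~\ref{lemma:phi_is_real}--\ref{lemma:phi_is_real_XX}) only controls a single component; it does not by itself exclude three consecutive $\alpha$-points $z_{i-1},z_i,z_{i+1}$ with $w_{i-1},w_i,w_{i+1}$ on the same half-axis spread over two adjacent components. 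The paper closes this by a separate argument: if $w_{i-1}$ and $w_i$ are real with $R(w_{i-1})=R(w_i)\in\Omega$ but lie in different components, then the piecewise-analytic curve in $\overline\Cp$ from Lemma~\ref{lemma:all_vals} joining them cannot be the real segment, so along it $\arg R$ has a strictly positive increment, which by $R(w_{i-1})=R(w_i)$ must be at least $2\pi$; Theorem~\ref{th:W_alpha_pts_order} then produces an intermediate $w_*$ with $R(w_*)\in\Omega$, contradicting consecutiveness. This $2\pi$-increment step is the missing idea in your sketch of~\eqref{item:th_main2_c}.

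Likewise, for~\eqref{item:th_main2_d} your appeal to Remark~\ref{rem:main_B2} does not by itself yield the dichotomy ``$z_i$ simple $\Leftrightarrow \arg z_{i-1}=\arg z_i$''. The paper proves the contrapositive directly: if $w_i$ is real, simple, and both neighbours $w_{i\pm 1}$ lie off its ray, then~\eqref{eq:abs_W_ineq} gives $|R(\pm|w_{i\pm 1}|)|\ne|\alpha|$ on the appropriate side, forcing a second solution of $|R|=|\alpha|$ in the same real component as $w_i$ with the same value of $\arg R$, hence a second $\Omega$-hit between $w_{i-1}$ and $w_{i+1}$---a contradiction. You should either supply this argument or explain precisely how Remark~\ref{rem:main_B2} pins the ``other'' point in the component to $z_{i-1}$ rather than to some $z_j$ with $|j-i|>1$.
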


In other words, if~\(\Im z_ie_{s}\ne 0\), then~\(z_i\) is simple, \(\Im z_i^k\ne 0\) and the
reflected point~\(z_i^*=\overline z_i e_{-2s}\) also solves~\(G(z)=\alpha\); no other
$\alpha$-points share the same absolute value. Furthermore, $z_i\in Q_{2m}$ and
$z_i^*\in Q_{-2s-2m-1}$ for some integer~$m$ (probably after exchanging
$z_i\leftrightarrow z_i^*$).

If~\(\Im z_ie_{s}=0\) \textit{i.e.} $z_i\in \overline Q_{2m}\cap \overline Q_{-2s-2m-1}$ for
some~$m$ satisfying~\eqref{eq:Q_es}, then Theorem~\ref{th:main2} asserts that~$z_i$ is simple or
double, and there are no other solutions of~\(G(z)=\alpha\) sharing the same absolute value.
If~$z_i$ is not the first or the last $\alpha$-point (with respect to the absolute value), then
either~\(z_i\) is double or exactly one another $\alpha$-point adjacent to~$z_i$ has the same
argument (in fact, it belongs to the same interval between two consequent singularities
of~\(\ln G\)).

\begin{proof}
    The equality~$G(z_i)=\alpha$ is equivalent to \(G(\overline z_ie_{-2s})=\alpha\) since
    \[
    G(\overline z_ie_{-2s})
      = \overline{G(z_ie_{2s})}=\overline{\alpha e_{2ps}}
      = \overline{\alpha e_{ps}}e_{-ps}=\alpha e_{ps}e_{-ps}=\alpha.
    \]
    Consequently,~$G(z_i)=\alpha$ if and only if~$G(z_i^*)=\alpha$,
    where~$z_i^*=\overline z_ie_{-2s}$. The points~$z_i$ and~$z_i^*$ coincide exactly
    when~$z_ie_s$ is a real number (\textit{cf.} Remark~\ref{rem:2_cases}).

    Choose the integer~$m$ satisfying~$z_i\in \widetilde Q_{2m}\cup \widetilde Q_{-2m-2s-1}$, which
    implies the same inclusion for~$z_i^*$. We constrain ourselves to the
    case~$z_i\in \overline Q_{2m}$ and thus~$z_i^*\in\overline Q_{-2m-2s-1}$: this causes no
    loss of generality since~$z_i$ and~$z_i^*$ are interchangeable with each other. The closed
    sector~$\overline Q_{2m}$ replaces~$\widetilde Q_{2m}$ since it is possible
    that~$z_i=z_i^*\in\overline Q_{2m}\cap\overline Q_{-2m-2s-1}$ (\textit{cf.}
    Remark~\ref{sec:corr_between_rays_and_sectors}). The
    point~$w_i\colonequals z_i^k\in\overline\Cp$ satisfies the
    equality~$R(w_i) = \alpha e_{-2pm}$. Conversely, if~$R(w_i) = \alpha e_{-2pm}$ then
    $z_i=\sqrt[k]{w_i}e_{2m}$ and~$z_i^*=\overline{\sqrt[k]{w_i}}e_{-2m-2s}$ are $\alpha$-points
    of~$G$.

    Now, the function~$R(w)$ satisfies the conditions of Theorem~\ref{th:W_alpha_pts}.
    Therefore, solutions of~$R(w)\in\Omega$ %=\{\alpha_{-2pm}\}_{m=0}^{k-1}
    in the closed upper half-plane~$\overline\Cp$ are distinct in absolute value; those in~\Cp\
    are additionally simple, and those on the real line are simple or double. In particular,
    if~$R(w)\in\Omega$ and~$|w|=|w_i|$ then~$w=w_i$, which implies the
    assertion~\eqref{item:th_main2_a}. Moreover, by Lemma~\ref{lemma:equal_multiplicity} the
    multiplicity of~$z_i$ equals one in the assertion~\eqref{item:th_main2_b} and is at most two
    in the assertions~\eqref{item:th_main2_c}--\eqref{item:th_main2_e} (and
    therefore~\eqref{item:th_main2_e} is proved).

    Now let~$|z_{i+1}|\in\Xi$. Then, analogously to~$z_i$, the points~$z_{i+1}$
    and~$\overline z_{i+1}e_{-2s}$ are the only two solutions of the equation~$G(z)=\alpha$
    which satisfy~$|z|=|z_{i+1}|$. Furthermore, we can assume that~$z_{i+1}\in\overline Q_{2l}$
    for some integer~$l$ without loss of generality.
    Then~$w_{i+1}\colonequals z_{i+1}^k\in\overline\Cp$
    implies~$z_{i+1}=\sqrt[k]{w_{i+1}}e_{2l}$, and
    consequently~$R(w_{i+1})=\alpha e_{-2pl}$.

    \textls[20]{Observe that the points~$w_i,w_{i+1}\in\overline\Cp$ satisfy the
    conditions~$|w_i|<|w_{i+1}|$, \ $R(w_i)=\alpha e_{-2pm}$}
    and~$R(w_{i+1})=\alpha e_{-2pl}=\alpha e_{-2pm+2\delta}$ for appropriate integers~$m,\delta$
    and the quantity~$\alpha e_{2pm} e^{i\varrho}$ cannot belong to~$\Omega$ for
    all~$\rho\in(0,\frac{2\delta\pi}k)$. By Theorem~\ref{th:W_alpha_pts_order}, this is possible
    only in two cases: if~$\delta=1$ or if simultaneously:~$\delta=0$, \
    $\Arg w_i=\Arg w_{i+1}\in\{0,\pi\}$ and~$|R(w)|\ne|\alpha|$ provided
    that~$|w_i|<|w|<|w_{i+1}|$. In the former case, we necessarily obtain the
    equation~$-2pl\equiv -2pm+2\delta\pmod{2k}$ with respect to the unknown~$l$, that
    is~$p(m-l)\equiv \delta=1\pmod{k}$. This proves the assertion~\eqref{item:th_main2_b}.

    Lemma~\ref{lemma:phi_is_real}, Lemma~\ref{lemma:phi_is_real_XX} and
    Remark~\ref{rem:prop-a-poi-infinite} together yield that, on each subinterval of the real
    line containing no singularities of~$\ln R$, \ $\arg R(w)$ is constant and there are at most
    two solutions (counting with multiplicities) to~$|R(w)|=|\alpha|$. Moreover, there are
    exactly one double or two simple solutions provided that both ends of the subinterval are
    positive poles or negative zeros of~$R$. Therefore,~$\arg z_i=\arg z_{i+1}$ (this condition
    corresponds to the case~$\delta=0$ above) implies that both~$z_i$ and~$z_{i+1}$ are simple.

    Let~$\arg z_i=\arg z_{i+1}$. The assertion~\eqref{item:th_main2_c} will be proved if we show
    that~$\arg z_{i}\ne\arg z_{i-1}$ when~$|z_{i-1}|\in\Xi$ and
    that~$\relpenalty 100 \arg z_{i+1}\ne\arg z_{i+2}$ when~$|z_{i+2}|\in\Xi$. We do it by
    contradiction: suppose that, for example,~$\relpenalty 100 \arg z_{i}=\arg z_{i-1}$. Then
    $R(w_{i-1})=R(w_{i})=\alpha e_{-2pm}$ with~$w_{i-1}\colonequals z_{i-1}^k$. On the one hand,
    the interval between~$w_{i-1}$ and~$w_{i}$ contains at least one singularity of~$\ln R$, and
    thus it cannot be a curve provided by~\eqref{item:all_vals_c} of Lemma~\ref{lemma:all_vals}.
    Along that curve,~$\arg R(w)$ is continuous and must have a nonzero increment%
    \footnote{More specifically, the difference $\arg R(w_{i}) - \arg R(w_{i-1})$ is positive
        for any fixed branch of~$\arg R$, which is continuous on the curve.}.
    Then the increment is at least~$2\pi$ due to the condition~$R(w_{i-1})=R(w_{i})$. On the
    other hand, Theorem~\ref{th:W_alpha_pts_order} then implies that there exists a
    point~$w_*\in\overline\Cp$ such that~$R(w_*)\in\Omega$ and~$|w_{i-1}|<|w_*|<|w_i|$
    since~$\Omega$ contains at least two points. Therefore,~$z_*=\sqrt[k]{w_*}e_{2p\widetilde m}$
    with a proper choice of~$\widetilde m$ satisfies the relations~$G(z_*)=\alpha$
    and~$|z_{i-1}|<|z_*|<|z_{i}|$, which contradicts to the definitions of the points~$z_{i-1}$
    and~$z_i$. As a consequence,~$\arg z_{i}\ne\arg z_{i-1}$. On condition
    that~$|z_{i+2}|\in\Xi$, the inequality~$\arg z_{i+1}\ne\arg z_{i+2}$ can be obtained
    analogously. Therefore,~\eqref{item:th_main2_c} is true. Furthermore, the similar proof
    gives us that if~$\arg z_i\ne\arg z_{i+1}$ (which corresponds to the case~$\delta=1$ above),
    $|z_{i-1}|\in\Xi$ and~$z_i$ is a double $\alpha$-point, then~$\arg z_i\ne\arg z_{i-1}$.

    The case when the $\alpha$-point~$z_i$ is real and simple, $\arg z_{i-1}\ne\arg z_i$
    and~$\arg z_i\ne\arg z_{i+1}$ is impossible. Indeed, let~$w_{i-1}$ be introduced in a way
    similar to~$w_i$ and~$w_{i+1}$. Suppose firstly that~$w_i>0$. Then we
    have~$\big|R(|w_{i\pm1}|)\big|<\big|R(w_{i\pm1})\big|=\big|R(w_i)\big|$ according to the
    inequality~\eqref{eq:abs_W_ineq}. If~$\ln R$ is regular in the interval
    $\mathfrak{I}\colonequals(|w_{i-1}|,|w_{i+1}|)$, then there exists a point~$w_*$ of this
    interval such that~$\big|R(w_*)\big|=|\alpha|$ and, since~$w_i$ is simple,~$w_*\ne w_i$. At
    the same time,~$\arg R$ does not change in~$\mathfrak{I}$, which gives us the
    contradiction~$R(w_*)=R(w_i)=\alpha$. If~$\ln R$ has singularities in the
    interval~$\mathfrak{I}$, then, instead of~$\mathfrak{I}$, it is enough to consider the
    maximal subinterval of~$\mathfrak{I}$ containing~$w_i$, in which~$\ln R$ is regular. The
    case~$w_i<0$ is proved analogously with the help of the
    inequality~$\big|R(-|w_{i\pm1}|)\big|>\big|R(w_{i\pm1})\big|=\big|R(w_i)\big|$, which is
    provided by~\eqref{eq:abs_W_ineq}. The assertion~\eqref{item:th_main2_d} holds, so the
    theorem is completely proved.
\end{proof}

\section{Location of the \texorpdfstring{$\alpha$}{\textalpha}-point that
is minimal or maximal in absolute value}
\label{sec:locat-alpha-points}
Let a function \(F\) have the form
\begin{equation}\label{eq:F_mth}
    F(z)
    \colonequals z^{p} e^{Az^k}\,
    % \frac
    \frac{\displaystyle\prod_{\nu=1}^{\omega_1} \left(1+\frac{z^k}{a_\nu}\right)}%\ \Bigg/\ 
    {\displaystyle\prod_{\mu=1}^{\omega_2} \left(1-\frac{z^k}{b_\mu}\right)}, \qquad F(z)\not\equiv z^p,
\end{equation}
where $k$ and~$p$ are integer such that~$k\ge 2$ and~\(\gcd(|p|,k)=1\),
\(0<\omega_1,\omega_2\le +\infty\), \ \(A\ge 0\) and \(a_\nu,b_\mu>0\) for all~\(\nu,\mu\). Such
functions are the particular case of~\eqref{eq:G_mth} and, therefore, satisfy conditions of
Theorem~\ref{th:main} and Theorem~\ref{th:main2}. The next two theorems reveal another property
of the~$\alpha$-set of~$F$. Assuming that the $\alpha$-set is nonempty, they answer which of the
sectors contains the $\alpha$-point (or $\alpha$-points) of the function~$F$ that is minimal in
absolute value.

\begin{theorem}\label{th:meromorphic_pos_p}
    Consider a complex number \(\alpha\ne 0\) and a function \(F\) of the form~\eqref{eq:F_mth}
    with~$p>0$. Let $q=0,\dots,k-1$ and $\varkappa=0,1$ be chosen so that
    $\alpha\in\widetilde Q_{2q-\varkappa}$, and the integer~$m$ be such that \(pm\equiv q\pmod{k}\).
    
    If $\alpha^k\not<0$, then the $\alpha$-point~$z_*$ of~\(F(z)\) closest to the origin is
    simple and distinct in argument and absolute value from the succeeding $\alpha$-points.
    Moreover, $\alpha\in Q_{2q-\varkappa}$ implies $z_*\in Q_{2m-\varkappa}$. If
    $\alpha e_{-2q}>0$, then~$z_*e_{-2m}>0$.

    If~$\alpha^k<0$, that is $\alpha e_{-2q+1}>0$, then the two zeros of~$F(z)-\alpha$ closest
    to the origin (counting double zeros as two) are equal in absolute value or in argument. The
    latter case is possible only when~$p=1$; if it occurs, both zeros belong to the ray
    $\{ze_{-2m+1}>0\}$. In the former case, one of them belongs to~$Q_{2m-1}$ and another
    belongs to~$Q_{2\widetilde m} = Q_{-2m-2s}$ where~$\widetilde m$ satisfies
    \(p\widetilde m \equiv q - 1\pmod{k}\) and~$s$ is introduced in Remark~\ref{rem:2_cases}.
\end{theorem}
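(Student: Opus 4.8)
\emph{Outline of the approach.} The plan is to pass, via Remark~\ref{rem:stretch-sect-nth-power}, from the equation $F(z)=\alpha$ to the relation $R(w)\in\Omega$ for $w\in\overline\Cp$, where $R$ is the function~\eqref{eq:def_R_w} attached to $F$ (so here $A_0=0$ and there are no $c_\nu,d_\mu$-factors), and then to single out the solution of $R(w)\in\Omega$ of least modulus; its $k$th roots, placed into the correct sectors by the dictionary of Remark~\ref{rem:stretch-sect-nth-power}, are the wanted $\alpha$-points of $F$. As in the proof of Theorem~\ref{th:main}, $R$ meets the hypotheses of Theorem~\ref{th:W_alpha_pts}, the function $\phi\colonequals w(\ln R)'$ has the form~\eqref{eq:R_func} with $B=p/k>0$, and $\phi$ is regular on $\mathfrak I\colonequals(0,b_1)$, where $b_1$ is the least positive pole of $R$ (or $b_1=+\infty$, in which case I would invoke Remark~\ref{rem:prop-a-poi-infinite}). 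Since $\mathfrak B\colonequals\lim_{t\to+0}\phi(t)=p/k>0$ and $\mathfrak B b_1>0$, case~\ref{item:a_lemma2xx} of Lemma~\ref{lemma:phi_is_real_XX} applies: $\Re\psi(x)=\ln|R(x)|$ increases strictly from $-\infty$ to $+\infty$ on $\mathfrak I$, and $\Re\psi(z)\neq\Re\psi(x)$ whenever $x\in\mathfrak I$, $z\in\overline\Cp\setminus\{x\}$ and $|z|\le|x|$. Consequently the level set $\{w\in\overline\Cp:|R(w)|=|\alpha|\}$ has a unique point of least modulus, the point $x_0\in\mathfrak I$ determined by $|R(x_0)|=|\alpha|$, and every solution of $R(w)\in\Omega$ has modulus $\ge x_0$.

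Let $w_*$ be the solution of $R(w)\in\Omega$ of least modulus (it exists, since the $\alpha$-set of $F$ is assumed nonempty and solutions are isolated and bounded away from $0$). I would start from $x_0$, where $R(x_0)=|\alpha|$ is real and positive, and move along the level curves $\{|R|=|\alpha|\}$ towards $w_*$, using item~\ref{item:all_vals_c} of Lemma~\ref{lemma:all_vals} (equivalently, repeated use of Theorem~\ref{th:W_alpha_pts_order}): the argument of $R$ then increases from $0$ to some $\beta\colonequals\arg R(w_*)\ge0$, and no argument of a point of $\Omega$ can be reached in $(0,\beta)$, for otherwise there would be a solution of $R(w)\in\Omega$ of modulus strictly between $x_0$ and $|w_*|$. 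Hence $\beta$ is the least non-negative number of the form $\pm\arg\alpha+\tfrac{2j\pi}{k}$, $j\in\mathbb Z$ --- this is where $\gcd(|p|,k)=1$ enters, ensuring that the arguments of the $2k$ numbers $\{\alpha e_{-2pm}\}\cup\{\overline\alpha e_{2pm}\}$ are exactly the residues congruent to $\pm\arg\alpha$ modulo $2\pi/k$. When $\alpha^k\not<0$ and $\alpha e_{-2q}$ is not positive, writing $\alpha\in Q_{2q-\varkappa}$ the minimum is uniquely attained, at $\alpha e_{-2pm}$ if $\varkappa=0$ and at $\overline\alpha e_{2pm}$ if $\varkappa=1$; by Remark~\ref{rem:stretch-sect-nth-power} this gives, respectively, $z_*=\sqrt[k]{w_*}\,e_{2m}\in\widetilde Q_{2m}$ or $z_*=\sqrt[k]{\overline{w_*}}\,e_{2m}\in\widetilde Q_{2m-1}$, i.e.\ $z_*\in\widetilde Q_{2m-\varkappa}$. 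To replace $\widetilde Q$ by the open sector $Q_{2m-\varkappa}$ one notes that $w_*\notin\mathbb R_{\ge0}$ (else $\arg R(w_*)=0$, forcing $\alpha e_{-2q}>0$), and that $w_*\notin\mathbb R_{<0}$ because, in this case $\beta\in(0,\pi/k)$, whereas on the negative axis $\arg R$ is congruent modulo $\pi$ to $\tfrac{(p\bmod k)\pi}{k}$, a value never lying in $(0,\pi/k)$.

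It remains to treat the boundary of the case $\alpha^k\not<0$, namely $\alpha e_{-2q}>0$ (equivalently $\alpha^k>0$): then $|\alpha|=R(x_0)$ itself lies in $\Omega$, so $w_*=x_0$ and $z_*=\sqrt[k]{x_0}\,e_{2m}$ with $\sqrt[k]{x_0}>0$, giving $z_*e_{-2m}>0$. Simplicity of $z_*$ follows from $R'(x_0)\neq0$ (the strict monotonicity above) together with Lemma~\ref{lemma:equal_multiplicity}; that no other $\alpha$-point shares the absolute value $\sqrt[k]{x_0}$ and that none has the same argument follow by combining the uniqueness of $x_0$ as a least-modulus level point with Theorem~\ref{th:main} (if $\Im\alpha^k\neq0$) or Theorem~\ref{th:main2} (if $\alpha^k>0$, where one checks $z_*^*=z_*$ using $s\equiv-2m\pmod k$) and the univalence of $R$ in $\Cp$ from Lemma~\ref{lemma:simp_z}.

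Finally, in the case $\alpha^k<0$ the argument of every point of $\Omega$ is congruent to $\pi/k$ modulo $2\pi/k$, so the least-modulus argument in the previous paragraph is $\beta=\pi/k$, and it is now attained \emph{simultaneously} by $\alpha e_{-2p\widetilde m}$ and by $\overline\alpha e_{2pm}$ (with $p\widetilde m\equiv q-1$ and $pm\equiv q\pmod k$), which in fact are the single number $|\alpha|e^{i\pi/k}$. Hence the least-modulus solution $w_*$ of $R(w)\in\Omega$ produces, through the two branches of Remark~\ref{rem:stretch-sect-nth-power}, two $\alpha$-points of $F$ of equal absolute value $|w_*|^{1/k}$: one in $\widetilde Q_{2\widetilde m}$ and one in $\widetilde Q_{2m-1}$; the congruence $p\widetilde m\equiv q-1\equiv-(m+s)p\pmod k$, obtained from $ps\equiv1-2q$ (Remark~\ref{rem:2_cases}), rewrites $2\widetilde m$ as $-2m-2s$. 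These two points coincide in argument exactly when $w_*$ is real, necessarily with $w_*<0$, and they then lie on the ray $\{ze_{-2m+1}>0\}$. The main obstacle will be to show that $w_*<0$ forces $p=1$: on the negative axis $\arg R(-t)=\tfrac{p\pi}{k}+\pi\cdot\#\{\nu:a_\nu<t\}$, so being equal to $\tfrac\pi k$ already before the first zero $-a_1$ of $R$ requires $p\equiv1\pmod k$, and matching it exactly (not merely modulo $2\pi$, and without a shorter level curve reaching $|\alpha|e^{i\pi/k}$ inside $\Cp$ first) forces $p=1$; pinning this down --- by comparing $\arg R$ on $(-a_1,0)$ with its monotone increase along the level curve issuing from $x_0$ and ruling out interior solutions of smaller modulus --- is the step demanding the most care.
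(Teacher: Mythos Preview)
Your approach is essentially the paper's: reduce to $R(w)\in\Omega$ via Remark~\ref{rem:stretch-sect-nth-power}, use Lemma~\ref{lemma:phi_is_real_XX}\,\ref{item:a_lemma2xx} with $\mathfrak B=p/k>0$ to locate the least-modulus level point $x_0\in(0,b_1)$ where $R(x_0)=|\alpha|$, and then follow the level curve by Lemma~\ref{lemma:all_vals}/Theorem~\ref{th:W_alpha_pts_order} to identify which element of $\Omega$ is reached first. Your case analysis for $\alpha\in Q_{2q-\varkappa}$ and for $\alpha e_{-2q}>0$ matches the paper's.

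The one substantive difference is in the step you yourself flag as delicate: showing that $w_*<0$ forces $p=1$ when $\alpha^k<0$. Your plan is to compute $\arg R$ directly on the negative axis and reconcile this value with the increment along the level curve, but as you note this runs into a modulo-$2\pi$ ambiguity, and it also tacitly assumes $|w_*|<a_1$, which you do not establish. The paper sidesteps both difficulties with a short argument-principle computation. Let $\gamma_1\subset\overline\Cp$ be the piecewise-analytic curve of Lemma~\ref{lemma:all_vals} from the positive point $x_0$ to the negative point $w_0$, and set $\gamma_2=\gamma_1\cup\overline{\gamma_1}$. The enclosed domain $D$ contains no poles of $R$ (all poles are real and $\ge b_1>x_0$), so the auxiliary function $S(w)\colonequals\sqrt[k]{w^{-p}}\,R(w)$, which has the form~\eqref{eq:V_is_ln_psi} with $B=A_0=0$, is holomorphic in $D$. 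Hence the increment of $\arg S$ along $\gamma_2$ is $2l\pi$ with $l\ge 0$, and by symmetry the increment along $\gamma_1$ is $l\pi\ge 0$. Since $\arg\sqrt[k]{w^p}$ increases by exactly $\tfrac{p}{k}\pi$ along $\gamma_1$, the increment of $\arg R$ along $\gamma_1$ is $\big(\tfrac pk+l\big)\pi\ge\tfrac pk\pi$. For this to equal the required $\tfrac\pi k$ one must have $p=1$ (and $l=0$). This argument makes no assumption on where $w_0$ sits relative to the $a_\nu$ and resolves the modular ambiguity automatically; I would replace your final paragraph with it.
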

\begin{proof}
    Let~$z_0$ denote the solution of the equation~$F(z)=\alpha$ that is minimal in absolute
    value. Consider the corresponding point~$w_0\in\overline\Cp$ determined by~$w_0=z_0^k$
    if~$\Im z_0^k\ge 0$ and by~$w_0=\overline z_0^k$ if~$\Im z_0^k\le 0$. Recall that (see
    Remark~\ref{rem:stretch-sect-nth-power}) the equality~$F(z_0)=\alpha$ is equivalent
    to~$R(w_0)\in\Omega$, where the function $R(w)=F(\sqrt[k]{w})$ is defined
    in~$\overline\Cp\setminus\{0\}$ by the equality~\eqref{eq:def_R_w}
    and~$\Omega=\{\alpha{}e_{2m}\}_{m=0}^{k-1}\cup\{\overline\alpha{}e_{2m}\}_{m=0}^{k-1}$.

    The key moment here is to implement the assertion~\eqref{item:a_lemma2xx} of
    Lemma~\ref{lemma:phi_is_real_XX} with the setting~$\psi=\ln R$. It implies that the
    point~$w_*$ of set
    $\Gamma\colonequals\left\{w\in\overline\Cp:\left|R(w)\right|=|\alpha|\right\}$ which is the
    closest to the origin satisfies~$0<w_*<b_1$, because
    \[
    \mathfrak B=\lim_{t\to+0}\phi(tb_1)=\lim_{t\to+0}t\psi'(t)=\frac pk>0.
    \]
    (Moreover, $R(w)$ cannot be uniformly bounded in~$\{w:w>0\}$, so~$w_*$ necessarily exists.)
    Note that~$R(w)$ has the form~\eqref{eq:def_R_w}, so $R(w_*)>0$, that is $R(w_*)=|\alpha|$.
    Putting $z_*\colonequals\sqrt[k]{w_*}e_{-2m}$ we obtain $F(z_*)=|\alpha| e_{2pm}$. As
    suggested by the theorem's statement, the integer~$m$ satisfies $pm\equiv q\pmod{k}$.
    Consequently, if $\alpha e_{-2q} = |\alpha|>0$ then the point~$z_0\colonequals z_*$
    satisfying the inequality~$z_0e_{-2m}>0$ is the zero of~$F(z)-\alpha$ we are looking for; it
    is simple by Lemma~\ref{lemma:equal_multiplicity}. (The example is given in
    Figure~\ref{fig:0a}, $\alpha=e^{i2\pi/3}$.)

    Suppose now that~$\alpha\in Q_{2q}$. Then the
    increment~$\arg R(w_0) -\arg R(w_*) = \Arg(\alpha e_{-2q})$ is positive and less
    than~$\frac{\pi}k$. Theorem~\ref{th:W_alpha_pts_order} implies that~$R(w)\notin\Omega$ on
    condition that $|w_*|<|w|<|w_0|$, and moreover, $R(w_0)=\alpha{}e_{-2q}=\alpha{}e_{-2pm}$.
    Therefore,~$z_0=\sqrt[k]{w_0}e_{2m}\in Q_{2m}$ is the required~$\alpha$-point.
    \setstretch{1.25}
    
    \begin{figure}[H]
        \begin{subfigure}{.495\linewidth}
            \centering
            \begin{overpic}[width=0.98\columnwidth%,grid,tics=25
                ]{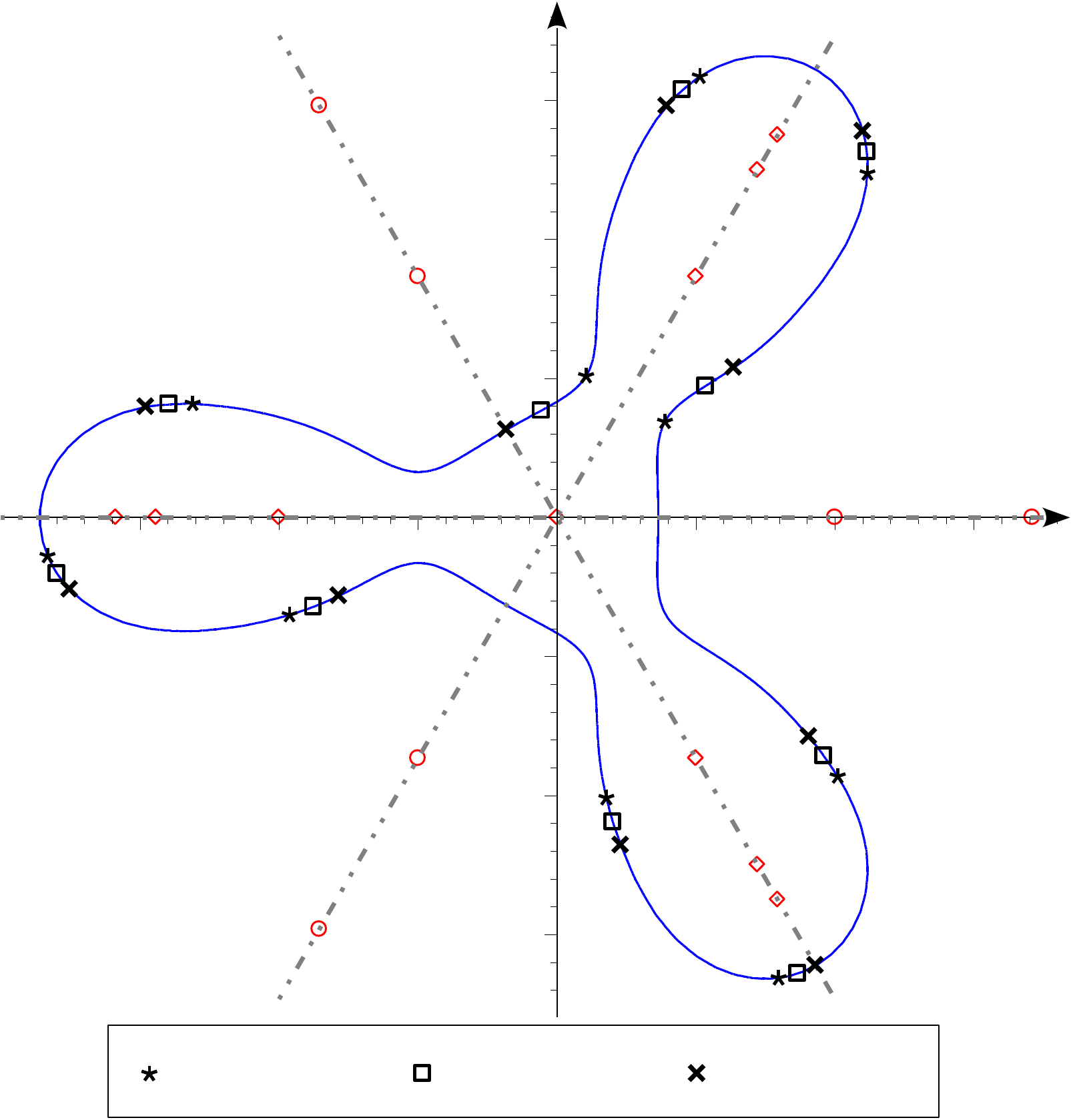}
                \put(155,30){$\alpha=e^{i\pi/3}$}
                \put(400,30){$\alpha=e^{i\pi /2}$}
                \put(645,30){$\alpha=e^{i 2\pi /3}$}

                \put(432,274){$-1$}
                \put(461,495){$0$}
                \put(461,773){$1$}
                \put(217,495){$-1$}
                \put(737,495){$1$}

                \put(859,485){$\Re z$}
                \put(390,950){$\Im z$}

                \green
                \put(820,685){$Q_0$}
                \put(810,365){$Q_{-1}$}
                \put(380,840){$Q_1$}
                \put(370,155){$Q_{-2}$}
                \put(150,755){$Q_2$}
                \put(140,300){$Q_{-3}$}
                
            \end{overpic}
            \vspace{0.5mm}
            \caption{\quad$F(z)=\dfrac{z(z^3+1)(z^3+3)(z^3+4)}{(z^3-1)(z^3-5)}$}
            \label{fig:0a}
        \end{subfigure}
        \begin{subfigure}{.495\linewidth}
            \centering
            \begin{overpic}[width=0.98\columnwidth%,grid,tics=25
                ]{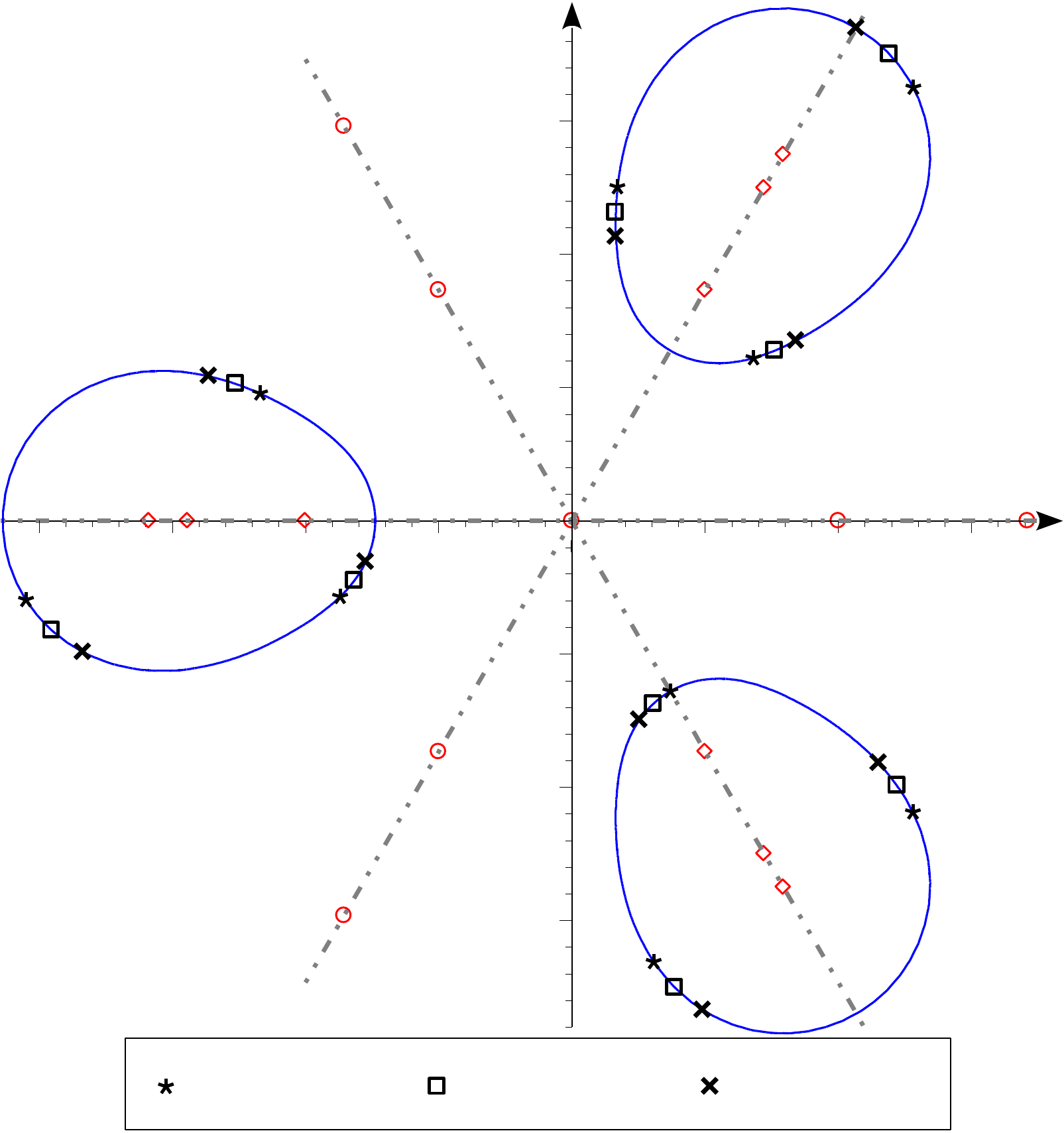}
                \put(164,30){$\alpha=e^{i\pi/3}$}
                \put(406,30){$\alpha=e^{i\pi /2}$}
                \put(648,30){$\alpha=e^{i 2\pi /3}$}
                % \put(255,30){$\alpha=e^{i\pi/3}$}
                % \put(427,30){$\alpha=e^{i\pi /2}$}
                % \put(599,30){$\alpha=e^{i 2\pi /3}$}

                \put(441,288){$-1$}
                \put(470,498){$0$}
                \put(470,760){$1$}
                \put(238,498){$-1$}
                \put(733,498){$1$}

                \put(845,488){$\Re z$}
                \put(399,945){$\Im z$}
                
                \green
                \put(820,645){$Q_0$}
                \put(810,395){$Q_{-1}$}
                \put(380,840){$Q_1$}
                \put(370,155){$Q_{-2}$}
                \put(150,755){$Q_2$}
                \put(140,300){$Q_{-3}$}
                
            \end{overpic}
            \caption{\quad$F(z)=\dfrac{(z^3+1)(z^3+3)(z^3+4)}{z(z^3-1)(z^3-5)}$}
            \label{fig:0b}
        \end{subfigure}
        \caption{The solutions to $F(z)=\alpha$ with~$k=3$, \ $p=\pm1$ for different values
            of~$\alpha$.
            (The isoline~$|F(z)|=1$ and zeros of the numerator and denominator of~$F$ have the same
            marks as in Figure~\ref{fig:00}.)}
        \label{fig:0}
    \end{figure}

    \newlength{\myparindent}\setlength{\myparindent}{\parindent}
    \noindent
    \begin{minipage}{1.0\textwidth}
        \begin{wrapfigure}{o}{.4\linewidth}
            \centering
            \begin{overpic}[width=\linewidth%,grid,tics=25
                ]{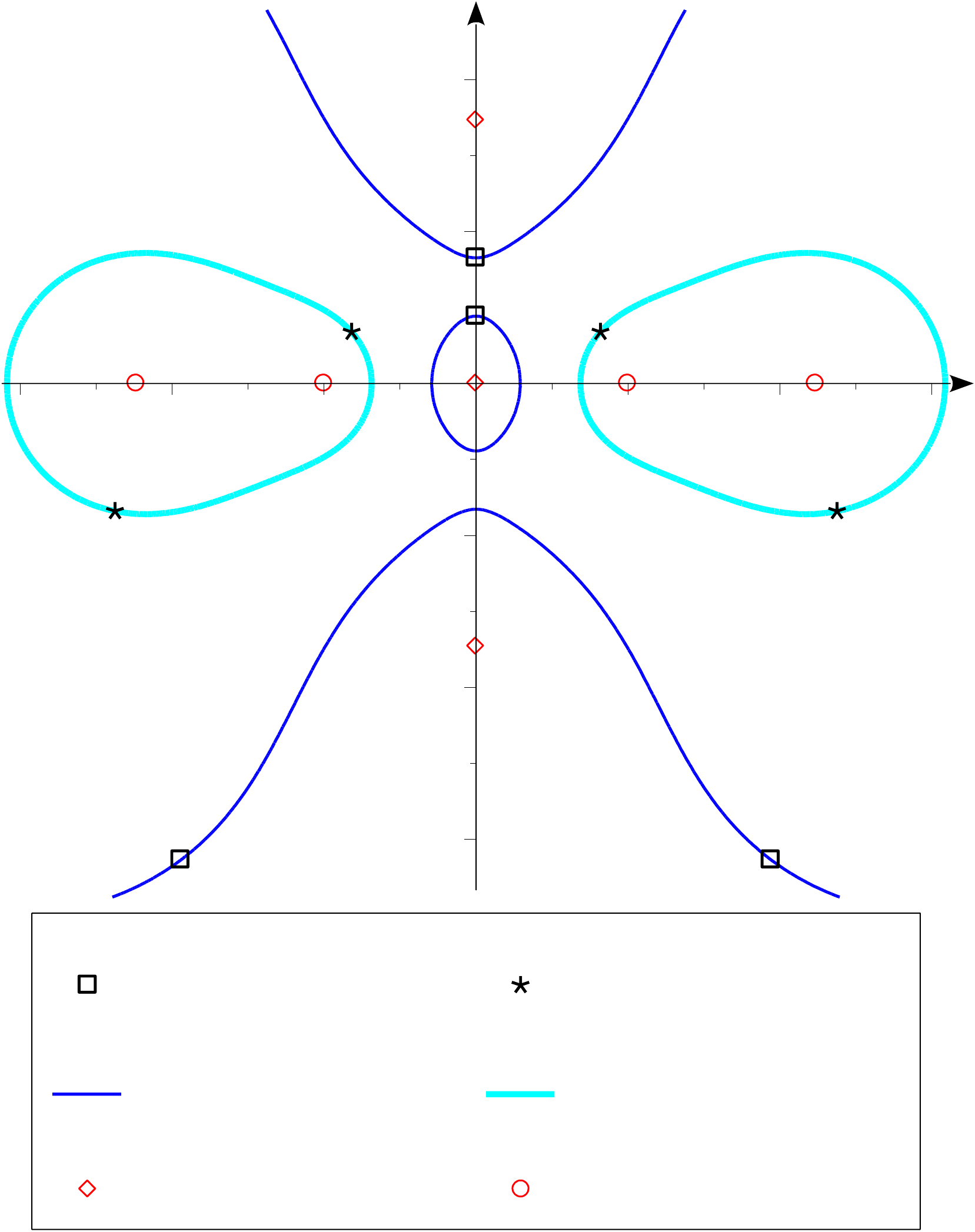}
                
                \put(110,189){$\frac{zf(z^2)}{g(z^2)}=\alpha=i$}
                \put(463,189){$\frac{zf(z^2)}{g(z^2)}=\alpha=\frac i5$}
                \put(110,100){$\left|\frac{zf(z^2)}{g(z^2)}\right|=1$}
                \put(463,100){$\left|\frac{zf(z^2)}{g(z^2)}\right|=\frac 15$}
                \put(110,20){$zf(z^2)=0$}
                \put(463,20){$g(z^2)=0$}

                \put(318,428){$-2$}
                \put(318,303){$-3$}
                \put(362,651){$0$}
                \put(350,920){$2$}
                \put(108,646){$-2$}
                \put(623,646){$2$}

                \put(673,721){$\Re z$}
                \put(410,950){$\Im z$}
            \end{overpic}
            \caption{The $\alpha$-points corresponding to}\label{fig:00}
            $f(z)=z+3$ and $g(z)=(z-1)(z-5)$.
        \end{wrapfigure}
        %\vspace{.2em}
        \setlength{\parindent}{\myparindent}%
        \setstretch{1.25} Note that % if needed???
        in the case when~$\alpha\in Q_{2q-1}$, the
        increment~\(\arg\alpha -\arg|\alpha| = \Arg(\alpha e_{-2q})\) is negative. At the same
        time~$\overline\alpha\in Q_{-2q}$, thus~$\overline\alpha e_{4q}\in Q_{2q}$ is the point
        of~$\Omega$ inducing the positive increment less then~$\frac{\pi}k$. % or <
        By Theorem~\ref{th:W_alpha_pts_order},~$R(w)\notin\Omega$ on condition that
        $|w_*|<|w|<|w_0|$, and
        moreover,~$R(w_0)=\overline\alpha{}e_{4q}e_{-2q}=\overline\alpha{}e_{2pm}$.
        Consequently,~$z_0=\sqrt[k]{\overline w_0}e_{2m}\in Q_{2m-1}$ as stated in
        Remark~\ref{rem:stretch-sect-nth-power} (for the illustration see Figure~\ref{fig:0a}
        with~$\alpha=e^{i\pi/2}$). Combining two last cases gives the
        implication~$\alpha\in Q_{2q-\varkappa}\implies z_0\in Q_{2m-\varkappa}$ and the
        simplicity of~$z_0$ by Theorem~\ref{th:main}.

        If~$\alpha e_{-2q+1}>0$,
        then~$\overline\alpha e_{4q} -\arg R(w_*)=\Arg e_{-1}=\frac{\pi}{k}$. Analogously to the
        previous case, $R(w_0)=\overline\alpha{}e_{2pm}$,
        therefore the equality~$z_0=\sqrt[k]{\overline w_0}e_{2m}\in\widetilde Q_{2m-1}$ determines
        the $\alpha$-point with the smallest absolute value. The situation
        when~$z_0\in Q_{2m-1}$ appears in Figure~\ref{fig:00}, $\alpha=\frac i5$, and
        Figure~\ref{fig:0a}, $\alpha=e^{i\pi/3}$. Theorem~\ref{th:main2} yields that there
        exists one another~$\alpha$-point of~$F$ with the same absolute value,
        namely~$z_0^* \colonequals \overline z_0e_{-2s}\in Q_{-2m-2s}$.

        The case of~$z_0 e_{-2m+1}>0$, that is~$w_0<0$, needs additional attention. The
        interval~$(-a_1, 0)$ contains one double (namely~$w_0$) or two simple ($w_0$ and~$w_1$)
        solutions to~$R(w_0)\in\Omega$ as provided by~\eqref{item:b_lemma2xx} of
        Lemma~\ref{lemma:phi_is_real_XX} with~$\psi=\ln R$. In addition,~$R(w)\notin\Omega$ on
        condition that~$|w_0|<|w|<|w_1|$, which is given by the
        inequality~\eqref{eq:re_psi_ineq}. These solutions determine the corresponding
        properties of the double $\alpha$-point~$z_0$ or, respectively, of the simple
        pair~$z_0$, $z_1$ with~$z_1e_{-2m+1}>0$ (as it is shown in Figure~\ref{fig:00}
        for~$\alpha=i$).
    \end{minipage}\vspace{1.0ex} % ????: if needed

    \setstretch{1.2}%
    In addition, the case when~$z_0 e_{-2m+1}>0$ is possible only if~$p=1$. Indeed, let~$w_0<0$
    and~$p\ge 2$. There exist a piecewise-smooth curve~$\gamma_1\subset\overline\Cp$
    connecting~$w_*$ and~$w_0$, such that~$\psi$ is holomorphic in its neighbourhood (see
    Lemma~\ref{lemma:all_vals}). Consider the closed contour
    $\gamma_2=\overline\gamma_1\cup\{w\in\mathbb C:\overline w\in\gamma_1\}$ and the enclosed
    domain~$D$ with the boundary~$\gamma_2$. The point~$w_*$ lies between the origin and the
    minimal pole, therefore the domain~$D$ contains no poles of~$R$. Consequently, there are no
    poles of the function~$S(w)\colonequals \sqrt[k]{w^{-p}}R(w)$ in~$D$ as well. Since~$S(w)$
    has the form~\eqref{eq:V_is_ln_psi} with~$B=A_0=0$, it is holomorphic in~$D$ and its
    argument on~$\gamma_2$ has the increment~$2 l\pi$ with some nonnegative integer~$l$ (we
    assume that~$\gamma_1$ and~$\gamma_2$ are directed anticlockwise). The increment
    on~$\gamma_1$ then must be~$l\pi\ge 0$, because~$S(\overline w)=\overline{S(w)}$. At the
    same time, $\arg\sqrt[k]{w^{p}} = \arg \frac{R(w)}{S(w)}$ has the increment~$\frac{p}{k}\pi$
    on~$\gamma_1$, which
    implies~$\arg R(w_0)-\arg R(w_*)=\left(\frac{p}k + l\right)\pi\ge\frac{p}k \pi$. Therefore,
    the condition~$\arg R(w_0)-\arg R(w_*)=\frac{\pi}k$, which is required for~$w_0<0$, fails to
    hold unless~$p=1$.
\end{proof}
Observe that the change of variable~$z\mapsto \zeta e_{-1}$ implies~$z^k\mapsto -\zeta^k$.
Hence, the function
\[
\widetilde F(\zeta) \colonequals \frac{e_{-p}}{F(\zeta e_{-1})}
= \zeta^{-p} e^{A\zeta^k}\,
{\prod_{\mu=1}^{\omega_2} \left(1+\frac{\zeta^k}{b_\mu}\right)}\ \Bigg/\ 
{\prod_{\nu=1}^{\omega_1} \left(1-\frac{\zeta^k}{a_\nu}\right)}
\]
has the form~\eqref{eq:F_mth} with a positive power of~$\zeta$ as the first factor provided
that~$p<0$. Moreover,
\begin{equation}\label{eq:z_zeta_corr}
    \begin{gathered}
        F(z) = \alpha \iff \widetilde F(\zeta) = \frac{e_{-p}}{\alpha}\equalscolon \widetilde\alpha,\\
        \alpha\in Q_{2q-\varkappa} \iff \widetilde\alpha\in Q_{-2q+\varkappa-p-1}
        \quad\text{and}\quad
        \alpha e_{-2q+\varkappa}>0 \iff \widetilde\alpha e_{2q-\varkappa+p}>0.
    \end{gathered}
\end{equation}
This way the case of~$p<0$ can be reduced to the situation studied in the previous theorem.
Unfortunately, the notation convenient in Theorem~\ref{th:meromorphic_pos_p} suits this case
worse inducing more complicated relations.
\begin{theorem}\label{th:meromorphic_neg_p}
    Suppose that all conditions of Theorem~\ref{th:meromorphic_pos_p} hold excepting
    that~$p<0$.

    If~$\alpha\in Q_{2q-\varkappa}$, then the $\alpha$-point~$z_0$ of~$F(z)$ closest to the
    origin is simple and distinct in argument and absolute value from the succeeding
    $\alpha$-point. Furthermore, $z_0\in Q_{2m-\sigma}$ where $\sigma\colonequals\varkappa$ for
    even~$p$, \ $\sigma\colonequals 1-\varkappa$ for odd~$p$, and the integer~$m$ satisfies%
    \footnote{Recall that $\left\lceil\frac{p}{2}\right\rceil$ stands for the minimal integer
        greater then or equal to~$\frac{p}{2}$. Here
        $\left|\left\lceil\frac{p}{2}\right\rceil\right|\le \left|\frac{p}{2}\right|$ since
        $p<0$.} \(pm\equiv q -(-1)^\sigma\left\lceil\frac{p}{2}\right\rceil \pmod{k}\).

    If $\alpha e_{-2q+\varkappa}>0$, where $p$ and $\varkappa$ have the same parity, then the
    $\alpha$-point~$z_0$ of~$F(z)$ closest to the origin is simple and distinct in argument and
    absolute value from the succeeding $\alpha$-point. Moreover, $z_0e_{-2m+1}>0$ for
    \(p m\equiv q + \left\lceil\frac{p-1}2\right\rceil \pmod{k}\).

    If $\alpha e_{-2q+\varkappa}>0$, where $p$ and $\varkappa$ have distinct parity, then the
    two zeros of~$F(z)-\alpha$ closest to the origin (counting double zeros as two) are equal in
    absolute value or in argument. In the latter case, which it possible only when~$p=-1$, both
    the zeros belong to the ray $\{ze_{-2m}>0\}$. In the former case, one of them belongs
    to~$Q_{2m}$ and another belongs to~$Q_{-2s-2m-1}$. Here~$m$ solves
    $p m\equiv q-\left\lceil\frac{p+1}2\right\rceil \pmod{k}$ and~$s$ is as in
    Remark~\ref{rem:2_cases}.
\end{theorem}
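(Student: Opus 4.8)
The plan is to deduce Theorem~\ref{th:meromorphic_neg_p} from Theorem~\ref{th:meromorphic_pos_p} by means of the rotation $z\mapsto\zeta e_{-1}$ introduced just before the statement, which turns $F$ into the function $\widetilde F(\zeta)=e_{-p}/F(\zeta e_{-1})$ of the form~\eqref{eq:F_mth} but with the \emph{positive} leading power $\zeta^{-p}$, so that Theorem~\ref{th:meromorphic_pos_p} applies to the pair $(\widetilde F,\widetilde\alpha)$ with $\widetilde\alpha=e_{-p}/\alpha$. Along this substitution one has $|\zeta|=|z|$, hence the $\alpha$-point of $F$ closest to the origin corresponds to the $\widetilde\alpha$-point of $\widetilde F$ closest to the origin; moreover $z\in Q_j\iff\zeta\in Q_{j+1}$, and~\eqref{eq:z_zeta_corr} translates the ``$\alpha$ lies on a sector ray'' hypotheses. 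So once Theorem~\ref{th:meromorphic_pos_p} locates the relevant $\widetilde\alpha$-point of $\widetilde F$, it only remains to push that information back through $\zeta\mapsto\zeta e_{-1}$. The proof introduces no new ingredient; its whole substance is the bookkeeping of sector indices, and that is where I expect the only real difficulty to lie.

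First I would fix the parameters. By~\eqref{eq:z_zeta_corr}, if $\alpha$ lies in the open sector $Q_{2q-\varkappa}$ then $\widetilde\alpha$ lies in the open sector $Q_{-p-2q+\varkappa-1}$, whereas if $\alpha$ sits on the ray $\{\alpha e_{-2q+\varkappa}>0\}$ then $\widetilde\alpha$ sits on the ray $\{\widetilde\alpha e_{-p-2q+\varkappa}>0\}$; note that the ``$-1$'' is present for open sectors but \emph{absent} for rays, since the ray at angle $(2q-\varkappa)\pi/k$ is mapped to a ray, not into the adjacent sector. Writing $\widetilde\alpha\in\widetilde Q_{2\widetilde q-\widetilde\varkappa}$ in the normalisation of Theorem~\ref{th:meromorphic_pos_p}, a parity check gives $\widetilde\varkappa\equiv\varkappa+p+1\pmod 2$ in the open-sector case and $\widetilde\varkappa\equiv\varkappa+p\pmod 2$ in the ray case. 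Equivalently $\widetilde\alpha^k=(-1)^p\alpha^{-k}$, so the three regimes of the present theorem --- $\alpha$ interior, $\alpha$ on a ray with $p,\varkappa$ of the same parity, $\alpha$ on a ray with $p,\varkappa$ of opposite parity --- correspond respectively to ``$\widetilde\alpha^k\not<0$'', ``$\widetilde\alpha^k>0$ with $\widetilde\alpha$ on a ray'', ``$\widetilde\alpha^k<0$'' in Theorem~\ref{th:meromorphic_pos_p}. This is exactly why the final answer branches on the parity of $p$, and why the alternatives ``equal in absolute value'' and ``equal in argument'' switch roles between the $p>0$ and the $p<0$ case.

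Next I would run Theorem~\ref{th:meromorphic_pos_p} on $(\widetilde F,\widetilde\alpha)$: it produces an integer $\widetilde m$ with $(-p)\widetilde m\equiv\widetilde q\pmod k$ and places the $\widetilde\alpha$-point of interest in $Q_{2\widetilde m-\widetilde\varkappa}$ (or on $\{\zeta e_{-2\widetilde m+1}>0\}$, or split between $Q_{2\widetilde m-1}$ and its reflected sector), all with the declared simplicity and distinctness, which are inherited because $z\mapsto\zeta e_{-1}$ is a biholomorphism of $\mathbb C\setminus\{0\}$ preserving moduli. Applying $\zeta\mapsto\zeta e_{-1}$ lowers every sector index by one, and choosing $m$ as the appropriate shift of $\widetilde m$ (namely $m=\widetilde m-1+\varkappa$ for even $p$ and $m=\widetilde m-\varkappa$ for odd $p$ in the interior case, $m=\widetilde m$ on a ray) turns the placement into $Q_{2m-\sigma}$ with $\sigma=\varkappa$ or $\sigma=1-\varkappa$ according to the parity of $p$. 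Substituting $\widetilde m$ in terms of $m$ into $(-p)\widetilde m\equiv\widetilde q\pmod k$, and using $2\widetilde q=-p-2q+\varkappa-1+\widetilde\varkappa$ (respectively $2\widetilde q=-p-2q+\varkappa+\widetilde\varkappa$ on a ray), collapses the congruence --- after separating $p$ even from $p$ odd so that the half-integers are absorbed into ceilings --- into exactly $pm\equiv q-(-1)^{\sigma}\lceil p/2\rceil$, $pm\equiv q+\lceil(p-1)/2\rceil$, and $pm\equiv q-\lceil(p+1)/2\rceil\pmod k$ in the three cases; the reflection index $s$ of Remark~\ref{rem:2_cases} for $\alpha$ is read off from the corresponding index for $\widetilde\alpha$, and the ``equal in argument'' alternative survives only when the leading power $-p$ of $\widetilde F$ is $1$, i.e.\ $p=-1$, with both zeros on the $z\mapsto\zeta e_{-1}$ image of $\{\zeta e_{-2\widetilde m+1}>0\}$, which is $\{ze_{-2m}>0\}$. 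The behaviour at sector boundaries (double $\alpha$-point, the reflected point $\overline z_0 e_{-2s}$) is taken verbatim from Theorem~\ref{th:main2} applied to $\widetilde F$.

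The main obstacle is thus purely combinatorial: keeping track of the one-sector shift, of the ``$-1$'' discrepancy between open sectors and rays, and of the parity of $-p-2q+\varkappa-1$, and then verifying that in all four parity combinations of $(p,\varkappa)$ and in all three regimes the congruences condense into precisely the stated ceiling-function forms. A practical way to organise it is to carry out the computation separately for $p$ even and $p$ odd from the outset, since $\widetilde\varkappa$, $\sigma$, and the relevant ceiling all bifurcate on this parity; beyond this nothing more than Theorems~\ref{th:meromorphic_pos_p}, \ref{th:main}, and~\ref{th:main2} is needed.
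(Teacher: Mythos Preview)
Your proposal is correct and follows essentially the same route as the paper: reduce to Theorem~\ref{th:meromorphic_pos_p} via the rotation $z=\zeta e_{-1}$ and the relations~\eqref{eq:z_zeta_corr}, then translate the sector index of the located $\widetilde\alpha$-point back by one and sort out the congruences by the parity of~$p$. One small bookkeeping slip: in the opposite-parity ray case the paper takes $m=\widetilde m-1$ rather than $m=\widetilde m$, but you have already flagged exactly this kind of index shift as the place where care is needed, and the remainder of your outline (including the identification of the three regimes with $\widetilde\alpha^k\not<0$, $\widetilde\alpha^k>0$, $\widetilde\alpha^k<0$ and the restriction of the equal-argument alternative to $p=-1$) matches the paper's argument.
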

\begin{proof}
    % and $\alpha\in Q_{2q-\varkappa} \iff \widetilde\alpha\in Q_{-2q+\varkappa-p-1}$.
    % The inequality $\alpha e_{-2q+\varkappa}>0$ corresponds to
    % $\widetilde\alpha e_{2q-\varkappa+p}>0$.
    With the notation
    \begin{equation}\label{eq:tilde_q_tilde_kappa}
        \widetilde q \colonequals -q + \left\lceil\frac{\varkappa-p-1}2\right\rceil
        \quad\text{and}\quad
        \widetilde\varkappa \colonequals 2\widetilde q +2q-\varkappa+p+1,
    \end{equation}
    the relations~\eqref{eq:z_zeta_corr} immediately yield
    \[
      \alpha\in Q_{2q-\varkappa} \iff \widetilde\alpha\in Q_{2\widetilde q-\widetilde\varkappa}
      \xLongrightarrow{\text{Theorem~\ref{th:meromorphic_pos_p}}}
      \zeta_0\in Q_{2\widetilde m-\widetilde\varkappa}\iff z_0\in Q_{2\widetilde m-\widetilde\varkappa-1},
    \]
    where $\widetilde m$ satisfies~$(-p)\cdot\widetilde m\equiv \widetilde q\pmod{k}$
    and~$\zeta_0$ is the solution to~$\widetilde F(\zeta)=\widetilde\alpha$ minimal in absolute
    value. That is, modulo~$k$ we have
    \begin{equation}\label{eq:tilde_m_via_q}
        p\widetilde m
        \equiv q - \left\lceil\dfrac{\varkappa-p-1}2\right\rceil
        =
        \begin{cases}
            q + \frac p2,& \text{if $p$ is even}\\
            q + \frac {p+1}2 -\varkappa,& \text{if $p$ is odd}
        \end{cases}
    \end{equation}

    \setstretch{1.12}
    Let~$m$ denotes such an integer that~$z_0\in Q_{2m-\sigma}$ for some~$\sigma\in\{0,1\}$.
    Then
    necessarily~$\relpenalty 100 2m-\sigma \equiv 2\widetilde m-\widetilde \varkappa-1
    \pmod{2k}$, which is satisfied by $m=\widetilde m-\widetilde \varkappa$
    and~$\sigma=1-\widetilde \varkappa$. At that, the second of the
    expressions~\eqref{eq:tilde_q_tilde_kappa} yields \(\widetilde\varkappa = 1-\varkappa\) if
    $p$ is even and $\widetilde\varkappa = \varkappa$ if $p$ is odd. The
    relation~\eqref{eq:tilde_m_via_q} within these settings becomes
    \[%begin{equation*}%\label{eq:tilde_m_via_q}
        pm
        % = q - \left\lceil\dfrac{\varkappa-p-1}2\right\rceil -p\varkappa
        % = q - \left\lceil\dfrac{\varkappa-1-(-1)^\varkappa p}2\right\rceil
        \equiv
        \begin{cases}
            q + \frac p2 - p\widetilde\varkappa,& \text{if $p$ is even;}\\
            q + \frac {p+1}2 -\widetilde\varkappa(p+1),& \text{if $p$ is odd}
        \end{cases}
        =
        \begin{cases}
            q + (-1)^{\widetilde\varkappa}\frac p2 ,& \text{if $p$ is even;}\\
            q + (-1)^{\widetilde\varkappa}\frac {p+1}2,& \text{if $p$ is odd}
        \end{cases}
    \]%end{equation*}
    modulo~$k$. Since~$p<0$, the last equality implies
    \[
    pm
    \equiv q + (-1)^{\widetilde\varkappa}\left\lceil\frac {p}2\right\rceil
    = q - (-1)^{\sigma}\left\lceil\frac {p}2\right\rceil\pmod{k}.
    \]
    However, this relation coincides with the relation for~$m$ suggested by the theorem's
    statement. For the corresponding illustration see Figure~\ref{fig:0b}, $\alpha=e^{i\pi/2}$.
    % \vspace{3ex}

    When~$\widetilde\alpha$ satisfies~$\widetilde\alpha e_{-2\widetilde q}>0$, from~\eqref{eq:z_zeta_corr}
    we have the relation~$-2\widetilde q \equiv 2 q-\varkappa+p \pmod{2k}$ instead
    of~\eqref{eq:tilde_q_tilde_kappa}, which determines the pair~$q,\varkappa$ satisfying the
    inequality~$\alpha e_{-2q+\varkappa}>0$. So, $z_0e_{-2m+1}>0$ for
    \[
    p m = -(-p)\cdot m \equiv -\widetilde q \equiv q+\frac{p-\varkappa}2
    = q + \left\lceil\frac{p-1}2\right\rceil
    \pmod{k}
    \]
    on account of the equal parity of~$p$ and~$\varkappa$. The corresponding plot can be found
    in Figure~\ref{fig:0b}, $\alpha=e^{i\pi/3}$.
    
    When~$\widetilde\alpha e_{-2\widetilde q+1}>0$, the
    relation~$-2\widetilde q+1 \equiv 2 q-\varkappa+p \pmod{2k}$ provides another pair $q,\varkappa$
    making the inequality~$\alpha e_{-2q+\varkappa}>0$ true. This gives us that
    $z_0\in Q_{2\widetilde m-2}$ or~$z_0e_{-2\widetilde m+2}>0$ (the latter is possible only for~$p=-1$)
    whenever
    \[p\widetilde m\equiv -\widetilde q \equiv q+ \frac{p-1-\varkappa}2 \pmod{k}.\]
    The change~$m\colonequals\widetilde m-1$ gives $z_0\in Q_{2m}$ or~$z_0e_{-2m}>0$ whenever
    \[
    p m\equiv q-p+\frac{p-1-\varkappa}2
    =q-\frac{p+1+\varkappa}2
    =q-\left\lceil\frac{p+1}2\right\rceil
    \pmod{k}.
    \]
    For~$z_0\in Q_{2m}$, the integer~$s$ defined as in Remark~\ref{rem:2_cases} provides
    the expression \( \overline z_0 e_{-2s} \) for the $\alpha$-point of~$F(z)$ which is
    equidistant with~$z_0$ from the origin. See the relevant example in Figure~\ref{fig:0b},
    $\alpha=e^{i2\pi/3}$.
\end{proof}

\begin{remark}
    Note that the last two theorems can be applied to the function~$H(1/z)$ when it has the
    form~\eqref{eq:F_mth}. This way one obtains a straightforward conclusion concerning
    {\myem the most distant} from~$0$ solution of the equation~$H(z)=\alpha$. It is of special
    interest for rational~$H(z)$: then both $H(z)$ and $H(1/z)$ can be represented as
    in~\eqref{eq:F_mth}.
\end{remark}

\section{Zeros of entire functions}
\label{sec:entire-functions}
Let the natural numbers~$j$ and $k$ be coprime and~$k\ge2$. Theorems~\ref{th:main},
\ref{th:main2}--\ref{th:meromorphic_neg_p} admit a transition to describing the zeros of
functions of the forms
\begin{equation*}%\label{eq:ent_funct_form2}
    H_1(z) \colonequals f(z^k) + z^{j} g(z^k)
    \quad\text{and}\quad
    H_2(z) \colonequals g(z^k) + z^{j} f(z^k),
\end{equation*}
where the functions~$f(z)$ and~$g(-z)$ are entire, have genus~$0$ and
{\myem only negative zeros}. At least one of the functions~$f$ and~$g$ needs to be not a
constant to exclude the trivial case. Furthermore, both functions $f(z^k)/f(0)$ and
$g(z^k)/g(0)$ must be real. They have no common zeros, therefore $f(z^k)\ne 0$, \ $g(z^k)\ne 0$
and~$z\ne 0$ when $H_1(z)=0$ or~$H_2(z)=0$.

To adapt the facts stated in Sections~\ref{sec:main-theorems}
and~\ref{sec:locat-alpha-points} for studying zeros of the functions~$H_1$ and~$H_2$, put
\begin{equation}\label{eq:Fi_repr}
    F_1(z)\colonequals z^{-j}\frac{f(z^k)/f(0)}{g(z^k)/g(0)}
    ,\qquad
    F_2(z)\colonequals z^{j}\frac{f(z^k)/f(0)}{g(z^k)/g(0)}
    \an
    \alpha\colonequals-\frac{g(0)}{f(0)}.
\end{equation}
Then the following identities hold:
\begin{equation}\label{eq:Hi_and_Fi}
    H_1(z) = \left(1-\frac{F_1(z)}\alpha\right)z^jg(z^k)
    \an
    H_2(z) = \left(1-\frac{F_2(z)}\alpha\right)g(z^k).
\end{equation}
Recall that~$z^jg(z^k)$ and~$H_i(z)$ have no common zeros for~$i=1,2$. Therefore, the
equalities~\eqref{eq:Hi_and_Fi} imply that $F_1(z)=\alpha\iff H_1(z)=0$ and
$F_2(z)=\alpha\iff H_2(z)=0$. That is, the zero set of~$H_i(z)$ {\myem coincides} with
the~$\alpha$-set of~$F_i(z)$ for~$i=1,2$. Moreover,~\eqref{eq:Hi_and_Fi} give that each
$\alpha$-point~$z_*$ of the function~$F_i(z)$ is the zero of~$H_i(z)$ with
{\myem the same multiplicity}.

Since the functions~$F_i(z)$ have the form~\eqref{eq:F_mth}, the zeros of~$H_i(z)$ for~$i=1,2$
are located as it is asserted about $\alpha$-points of~$F_i(z)$ by Theorems~\ref{th:main},
\ref{th:main2}--\ref{th:meromorphic_neg_p} with~$\alpha=-\frac{g(0)}{f(0)}$
and~$p=(-1)^i\cdot j$.

\begin{remark}\label{rem:zeros-entire-funct-extensions}
    Some extensions of the fact proposed in the current section are possible. Here we give two
    examples. However, it is unclear whether studying such functions is well-motivated.
    \begin{asparaenum}[1.]
    \item Assume that~$f(z)$ and~$g(z)$ are functions regular and nonzero at the origin, and
        that $\frac{f(z)}{g(-z)}$ does not coincide with~$z^p$ up to a constant (to suppress the
        trivial case). From the comparison of the formulae~\eqref{eq:Fi_repr}
        with~\eqref{eq:G_mth} and~\eqref{eq:F_mth} it is seen that~$f(z)/f(0)$ and~$g(-z)/g(0)$
        can be allowed to have the form
        \begin{equation}\label{eq:funct_gen_stps}
            e^{Az}\cdot {\prod_{\nu>0} \left(1+\frac{z}{a_\nu}\right)}\ \bigg/\
            {\prod_{\mu>0} \left(1-\frac{z}{b_\mu}\right)},\quad\text{where} \quad
            A\ge 0\an a_\nu,b_\mu>0\text{ for all $\mu$, $\nu$}.
        \end{equation}
        Put in other words, if~$f(0),g(0)\in\mathbb{C}\setminus\{0\}$ then $f(z)/f(0)$
        and~$g(-z)/g(0)$ can generate any \emph{totally positive sequences} which start
        with~$1$. Indeed, after multiplying~$H_i$ by the denominators of~$f(z^k)$ and~$g(z^k)$
        we obtain the entire function~$\widetilde H_i$ with the same zeros as~$H_i$. Then it is
        enough to note that the exponential factor originating from those of~$f(z^k)$
        and~$g(z^k)$ is allowed in the representation~\eqref{eq:F_mth}. So, the result of the
        current section extends to such functions without any changes.
    \item Let $f(z)$ and~$g(-z)$ be nontrivial functions generating
        \emph{doubly infinite totally positive sequences} up to a complex constant factor,
        \textit{i.e.} of the form~\eqref{eq:funct_gen_dtps} with a complex~$C\ne 0$. In addition
        let $f(z)\not\equiv~\text{const}\cdot z^pg(-z)$. With the help of analogous
        manipulations we still can obtain a transition of Theorems~\ref{th:main}
        and~\ref{th:main2}. After taking out the factors~$z^p$ of~\eqref{eq:funct_gen_dtps}, it
        is enough additionally to factor some power of~$z$ out of~$H_i$ (this cannot change the
        zero set excepting the origin).
    \end{asparaenum}
\end{remark}
\begin{remark}\label{rem:zeros-entire-funct-extensions-plus}
    Allowing $f(z)$ and~$g(-z)$ to be arbitrary functions of the forms~\eqref{eq:funct_gen_stps}
    or~\eqref{eq:funct_gen_dtps} with~$C\in\mathbb{C}\setminus\{0\}$ can be useful in the
    following sense. Consider the power series
    \[
    f(z)=\sum_{n=-\infty}^\infty f_nz^n
    \an
    g(z)=\sum_{n=-\infty}^\infty g_nz^n
    \]
    such that $f_n\ne f_0^{1-n}f_1^n$ and~$g_m\ne g_0^{1-m}g_1^{m}$ for some~$n,m\in\mathbb{Z}$.
    Then (see the discussion on Page~\pageref{eq:funct_gen_dtps}) the series converge and the
    functions~$f(z)/f_0$ and~$g(-z)/g_0$ \emph{generate totally positive sequences} (possibly
    doubly infinite) if and only if the Toeplitz matrices
    \(\left(f_{n-i}/f_0\right)_{i,n=-\infty}^\infty\) and
    \(\left((-1)^{n-i}g_{n-i}/g_0\right)_{i,n=-\infty}^\infty\) have all their minors
    nonnegative~\cite{AESW,Edrei}. However, then
    \[
    H_1(z)=\sum_{n=-\infty}^\infty \big(f_n+z^jg_n\big) z^{kn}
    \an
    H_2(z)=\sum_{n=-\infty}^\infty \big(g_n+z^jf_n\big) z^{kn}
    \]
    are the Laurent series. This gives us the conditions in terms of
    \emph{the Laurent coefficients} of~$H_1(z)$ and~$H_2(z)$ which provide that the
    zeros~of~$H_1(z)$ and~$H_2(z)$ are localized according to Theorems~\ref{th:main}
    and~\ref{th:main2} (and to Theorems~\ref{th:meromorphic_pos_p}--\ref{th:meromorphic_neg_p}
    when the series~$f(z)$, $g(z)$ are not doubly infinite, that the limiting functions is are
    meromorphic).
\end{remark}
\setstretch{1.15}%
\section{Conclusions for the case~\texorpdfstring{$k=2$}{k=2}}
\label{sec:concl_for_k_2}
Note that in the particular case of $p=\pm 1$ the relations modulo~$k$ from
Theorems~\ref{th:main},~\ref{th:main2}--\ref{th:meromorphic_neg_p} have obvious solutions. The
% additional 
setting $k=2$ (implying that~$p$ is odd) also provides us with simple (and very useful)
solutions. Let us restate the facts of the previous section for this particular situation.

Denote~$p=2j+1$. The congruence modulo~$k$ (a linear Diophantine equation) from
Theorem~\ref{th:main} becomes~$l\equiv 1+\varkappa+\sigma+m\pmod{2}$. If~$\alpha\in\mathbb{R}$
(or~$i\alpha\in\mathbb{R}$), then the constant~$s$ in
Theorems~\ref{th:main2}--\ref{th:meromorphic_neg_p} equals~$0$ (or~$1$, respectively). The
congruence from Theorem~\ref{th:main2} turns into~$l=1+m\pmod{2}$. The equation from
Theorem~\ref{th:meromorphic_pos_p} becomes~$m=q\pmod{2}$, and those from
Theorem~\ref{th:meromorphic_neg_p} become
\[
m\equiv
\begin{cases}
    q + j+1 \pmod{2},&\text{if}\quad \alpha\in Q_{2q-\varkappa}\quad\text{or}\quad\alpha e_{-2q}>0;\\
    q + j \pmod{2},&\text{if}\quad \alpha e_{-2q+1}>0.
\end{cases}
\]
Let $N=(z_k)_{k=1}^\omega$ be the set of all $\alpha$-points of~$F(z)$, where
$|z_{k-1}|\le |z_k|$ for all~$k$ and each $\alpha$-point counts so many times which multiplicity
it has. Then we have the following two theorems as a summary.

\begin{theorem}[\textit{cf.}~\cite{Dyachenko1}]\label{th:ca1}
    Let a function~$F(z)$ have the form~\eqref{eq:F_mth}, $p=2j+1$, \ $j<0$ and $k=2$. then the
    $\alpha$-points~$N=(z_k)_{k=1}^\omega$ of~$F(z)$ are distributed as follows:
    \begin{enumerate}[\upshape(i)]
    \item\label{distr1} If\/~$\Im \alpha^2\ne 0$, then all zeros are simple and satisfying
        $0<|z_1|<|z_2|<\dots$; $\Im z_n^2\ne 0$~for every natural~$n$
        and~$z_n\in Q_l\implies z_{n+1}\in Q_{l+\sign\left(\Im \alpha^2\right)}$.
        Moreover,
        %$(-1)^j\Re \alpha\Re z_1<0$
        $(-1)^j\Im \alpha\Im z_1 > 0$
        and $\Im \alpha^2\Re z_1\Im z_1<0$.
    % \item\label{distr1a} If\/~$\Im \alpha^2<0$, then all zeros are simple and satisfying
    %     $0<|z_1|<|z_2|<\dots$; $\Im z_n^2\ne 0$~for every natural~$n$
    %     and~$z_n\in Q_l \implies z_{n+1}\in Q_{l-1}$.
    %     Moreover,
    %     %$(-1)^j\Re \alpha\Re z_1<0$
    %     $(-1)^j\Im \alpha\Im z_1 > 0$
    %     and $\Re z_1\Im z_1>0$.
    \item\label{distr2} If\/~$\Im \alpha=0$, then $0<|z_1|\le|z_2|<|z_3|\le|z_4|<|z_5|\dots$, there are
        no purely imaginary zeros, real zeros can be simple or double, other zeros are simple.
        Moreover, for each natural $n$ the following five conditions hold
        \begin{gather*}
            |z_{2n-1}|=|z_{2n}| \implies z_{2n-1}=\overline{z}_{2n},\qquad\qquad
            |z_{2n-1}|<|z_{2n}| \implies \Arg z_{2n-1}=\Arg{z}_{2n}\in\{0,\pi\},\\
            \Re z_{2n}\Re z_{2n+1}<0,\qquad\qquad
            (-1)^j\alpha\Re z_1 < 0\qquad\qquad\text{and}\qquad\qquad
            |z_{1}|<|z_{2}| \implies j=-1.
        \end{gather*}
    \item\label{distr3} If\/~$\Re \alpha=0$, then $0<|z_1|<|z_2|\le|z_3|<|z_4|\le|z_5|\dots$,
        there are no real zeros, purely imaginary zeros can be simple or double, other zeros are
        simple. Moreover, for each natural $n$ the following five conditions hold
        \begin{gather*}
            |z_{2n}|=|z_{2n+1}| \implies z_{2n}=-\overline{z}_{2n+1},\qquad\qquad
            |z_{2n}|<|z_{2n+1}| \implies
                    \Arg z_{2n}=\Arg{z}_{2n+1}\in\left\{-\frac\pi2,\frac\pi2\right\},\\
            \Im z_{2n-1}\Im z_{2n}<0,\qquad\qquad
            (-1)^j\Im \alpha\Im z_1 > 0 \qquad\qquad\text{and}\qquad\qquad \Re z_1 = 0.
            %the last two conditions are equivalent to (-1)^j\alpha z_1 < 0
        \end{gather*}
    \end{enumerate}
\end{theorem}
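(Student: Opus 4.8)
The plan is to obtain Theorem~\ref{th:ca1} by reading off Theorems~\ref{th:main}, \ref{th:main2} and~\ref{th:meromorphic_neg_p} in the special situation $k=2$, $p=2j+1$. Since $j<0$ we have $p\le-1<0$, so it is Theorem~\ref{th:meromorphic_neg_p} (not Theorem~\ref{th:meromorphic_pos_p}) that applies, and $p$ is automatically coprime to~$2$; hence $F$ of the form~\eqref{eq:F_mth} satisfies the hypotheses of all three theorems. First I would fix the $k=2$ dictionary: the sectors $Q_s$ are the four open quadrants, indexed cyclically modulo~$4$; the pairwise closed intersections $\overline Q_0\cap\overline Q_3$ and $\overline Q_1\cap\overline Q_2$ are the two real half-axes, while $\overline Q_0\cap\overline Q_1$ and $\overline Q_2\cap\overline Q_3$ are the two imaginary half-axes; and $\Im z^2=2\,\Re z\,\Im z$, so ``$\Im z^2\ne0$'' says precisely that $z$ lies strictly inside a quadrant. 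For $\alpha\in\mathbb R$ one has (from Remark~\ref{rem:2_cases}) $s=0$ and the involution $z\mapsto\overline z$ with fixed set the real axis; for $i\alpha\in\mathbb R$ one has $s=1$ and $z\mapsto-\overline z$ with fixed set the imaginary axis.

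For item~\eqref{distr1} I would invoke Theorem~\ref{th:main}: as $\Im\alpha^2\ne0$, every $z_n$ is simple, satisfies $\Im z_n^2\ne0$ (hence lies in an open quadrant, so there are no real and no purely imaginary zeros, and $|z_1|<|z_2|<\cdots$), and the congruence $p(l+m)\equiv2q+1-\varkappa-\sigma\pmod2$ reduces for odd $p$ to $l\equiv1+\varkappa+\sigma+m\pmod2$. Substituting $\varkappa=0$ when $\Im\alpha^2>0$ and $\varkappa=1$ when $\Im\alpha^2<0$ and following the quadrant index $2m-\sigma$ of $z_n$, one finds that the index of $z_{n+1}$ equals that of $z_n$ advanced by $\sign(\Im\alpha^2)$ modulo~$4$, which is exactly $z_n\in Q_l\Rightarrow z_{n+1}\in Q_{l+\sign(\Im\alpha^2)}$. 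The position of $z_1$ comes from the first alternative of Theorem~\ref{th:meromorphic_neg_p}: writing $\alpha\in Q_{2q-\varkappa}$ gives $\sigma=1-\varkappa$, so $z_1\in Q_{2m-1+\varkappa}$ with $m\equiv q+j+1\pmod2$, and a check over the four quadrants $q,\varkappa\in\{0,1\}$ shows that $\alpha$ and $z_1$ always lie in quadrants whose indices have opposite parity, equivalently $\Re\alpha\,\Im\alpha$ and $\Re z_1\,\Im z_1$ have opposite signs (i.e. $\Im\alpha^2\,\Re z_1\,\Im z_1<0$), and that $\sign(\Im\alpha\,\Im z_1)=(-1)^j$, i.e. $(-1)^j\Im\alpha\,\Im z_1>0$.

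For items~\eqref{distr2} and~\eqref{distr3} the hypothesis is $\Im\alpha^2=0$, so Theorem~\ref{th:main2} controls the structure. Its conclusions, together with Remark~\ref{rem:main_B2}, group the $\alpha$-points into pairs $\{z_{2n-1},z_{2n}\}$ (resp.\ $\{z_{2n},z_{2n+1}\}$, the minimal one $z_1$ standing alone, in case~\eqref{distr3}), each pair being either a conjugate pair of simple non-real points, or a double point on the fixed axis, or two simple points sharing one half of that axis and lying between two consecutive singularities of $\ln R$; since the fixed axis is never an admissible sector for the opposite value of $s$, there are no purely imaginary zeros in~\eqref{distr2} and no real zeros in~\eqref{distr3}. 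The congruence $p(m-l)\equiv1\pmod2$ becomes $l\equiv m+1\pmod2$; under the dictionary ($m$ even/odd $\Leftrightarrow$ $z_i$ in the open right/left half-plane when $s=0$, in the open upper/lower half-plane when $s=1$), and since a pair stays within one half-plane while the half-plane switches from one pair to the next, this yields $\Re z_{2n}\,\Re z_{2n+1}<0$ in~\eqref{distr2} and $\Im z_{2n-1}\,\Im z_{2n}<0$ in~\eqref{distr3}, together with the stated pairing of absolute values and arguments. Finally, the minimal $\alpha$-point is pinned down by Theorem~\ref{th:meromorphic_neg_p}: for $\alpha\in\mathbb R$ one is in its third alternative ($\varkappa=0$, parity opposite to $p$), whence the two smallest zeros coincide in modulus or---only if $p=-1$, i.e. $j=-1$---in argument, and a four-quadrant check gives $(-1)^j\alpha\,\Re z_1<0$; for $i\alpha\in\mathbb R$ one is in its second alternative ($\varkappa=1$, same parity as $p$), which places $z_1$ on an imaginary half-axis (so $\Re z_1=0$) and fixes its sign so that $(-1)^j\Im\alpha\,\Im z_1>0$.

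The main obstacle is bookkeeping rather than anything conceptual: one must keep straight which alternative of Theorem~\ref{th:meromorphic_neg_p} fires in each case (this hinges on the parity of $\varkappa$ relative to that of $p$) and then convert ``sector index modulo~$4$'' into signs of $\Re z$ and $\Im z$ uniformly in the parity of $j$. The safe way to do this is to tabulate the four cases $q,\varkappa\in\{0,1\}$ once and cross-check the outcome against Figures~\ref{fig:0a}, \ref{fig:0b} and~\ref{fig:00}.
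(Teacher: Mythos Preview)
Your proposal is correct and follows essentially the same approach as the paper: the paper likewise derives Theorem~\ref{th:ca1} by specializing Theorems~\ref{th:main}, \ref{th:main2} and~\ref{th:meromorphic_neg_p} to $k=2$, $p=2j+1$, recording exactly the congruence reductions $l\equiv 1+\varkappa+\sigma+m\pmod 2$, $l\equiv m+1\pmod 2$, and $m\equiv q+j+1$ or $q+j\pmod 2$ that you work through. If anything, you spell out the quadrant bookkeeping more carefully than the paper, which simply states the reduced congruences and then presents the theorem ``as a summary''.
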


\begin{theorem}\label{th:ca2}
    Let a function~$F(z)$ have the form~\eqref{eq:F_mth}, $p=2j+1$, \ $j\ge 0$ and $k=2$. Then
    the $\alpha$-points $N=(z_k)_{k=1}^\omega$ of~$F(z)$ are distributed as follows:
    \begin{enumerate}[\upshape(i)]\addtocounter{enumi}{3}
    \item\label{distr4} If\/~$\Im \alpha^2\ne 0$, then all zeros are simple and satisfying
        $0<|z_1|<|z_2|<\dots$; $\Im z_n^2\ne 0$~for every natural~$n$,
        $z_n\in Q_l\implies z_{n+1}\in Q_{l+\sign\left(\Im \alpha^2\right)}$. Moreover,
        $\Im\alpha\Im z_1 > 0$ and $\Re\alpha\Re z_1>0$.
    \item\label{distr5} If\/~$\Im \alpha=0$, then $0<|z_1|<|z_2|\le|z_3|<|z_4|\le|z_5|\dots$,
        there are no purely imaginary zeros, real zeros can be simple or double, other zeros are
        simple. Moreover, for each natural $n$ the following five conditions hold
        \begin{gather*}
            |z_{2n}|=|z_{2n+1}| \implies z_{2n}=\overline{z}_{2n+1},\qquad\qquad
            |z_{2n}|<|z_{2n+1}| \implies
                    \Arg z_{2n}=\Arg{z}_{2n+1}\in\{0,\pi\},\\
            \Re z_{2n-1}\Re z_{2n}<0,\qquad\qquad
            \Re z_1 = 0 \qquad\qquad\text{and}\qquad\qquad \alpha z_1 > 0.
        \end{gather*}
    \item\label{distr6} If\/~$\Re \alpha=0$, then $0<|z_1|\le|z_2|<|z_3|\le|z_4|<|z_5|\dots$,
        there are no real zeros, purely imaginary zeros can be simple or double, other zeros are
        simple. Moreover, for each natural $n$ the following five conditions hold
        \begin{gather*}
            |z_{2n-1}|=|z_{2n}| \implies z_{2n-1}=-\overline{z}_{2n},\qquad\qquad
            |z_{2n-1}|<|z_{2n}| \implies \Arg z_{2n-1}=\Arg{z}_{2n}\in\left\{-\frac\pi2,\frac\pi2\right\},\\
            \Im z_{2n}\Im z_{2n+1}<0,\qquad\qquad
            \Im\alpha\Im z_1 > 0\qquad\qquad\text{and}\qquad\qquad
            |z_{1}|<|z_{2}| \implies j=1.
        \end{gather*}
    \end{enumerate}
\end{theorem}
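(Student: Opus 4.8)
The plan is to deduce Theorem~\ref{th:ca2} from Theorems~\ref{th:main}, \ref{th:main2} and~\ref{th:meromorphic_pos_p} by specializing to $k=2$ and $p=2j+1$, which is positive because $j\ge 0$. The preparatory step is the dictionary between the sectors and the coordinate planes: for $k=2$ the sectors $Q_0,Q_1,Q_2,Q_3$ are exactly the four open quadrants (anticlockwise, $Q_0=\{\Re z>0,\,\Im z>0\}$), so that $Q_0\cup Q_3=\{\Re z>0\}$, $Q_1\cup Q_2=\{\Re z<0\}$, $Q_0\cup Q_1=\{\Im z>0\}$, $Q_2\cup Q_3=\{\Im z<0\}$; and the reflection $z_i^*=\overline z_i e_{-2s}$ of Theorem~\ref{th:main2} is reflection in the real axis when $s=0$ (i.e.\ $\alpha\in\mathbb R$) and in the imaginary axis when $s=1$ (i.e.\ $\alpha\in i\mathbb R$). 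Finally, every linear congruence modulo $k=2$ occurring in those theorems simplifies since $p$ is odd: $px\equiv x\pmod2$.

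For item~\ref{distr4}: here $\Im\alpha^2\ne0$ forces $\alpha\notin\mathbb R\cup i\mathbb R$, hence $\alpha^k=\alpha^2\not<0$. Theorem~\ref{th:main} yields simplicity, $\Im z_n^2\ne0$ and $0<|z_1|<|z_2|<\cdots$; its transition congruence $p(l+m)\equiv 2q+1-\varkappa-\sigma\pmod2$ reduces to $l\equiv 1+\varkappa+\sigma+m\pmod2$, and feeding this into $z_{n+1}\in Q_{2l-1+\sigma}$ while checking $\sigma\in\{0,1\}$ shows the sector index moves by $+1$ when $\varkappa=0$ and by $-1$ when $\varkappa=1$, that is $z_n\in Q_l\implies z_{n+1}\in Q_{l+\sign(\Im\alpha^2)}$. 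Since $\alpha^k\not<0$, Theorem~\ref{th:meromorphic_pos_p} applies in its first regime and puts $z_1\in Q_{2m-\varkappa}$ with $pm\equiv q\pmod2$; then $m\equiv q\pmod2$ gives $2m-\varkappa\equiv 2q-\varkappa\pmod4$, so $z_1$ lies in the \emph{same} quadrant as $\alpha$, which is precisely $\Im\alpha\,\Im z_1>0$ and $\Re\alpha\,\Re z_1>0$.

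For items~\ref{distr5} and~\ref{distr6}: now $\Im\alpha^2=0$, so Theorem~\ref{th:main2} supplies the global structure (at most two $\alpha$-points of each modulus, related by $z\mapsto z^*$), and its half-plane jump $p(m-l)\equiv1\pmod2$ collapses to $l\equiv m+1\pmod2$, which through the dictionary says consecutive reflection classes alternate between $\{\Re z>0\}$ and $\{\Re z<0\}$ when $\alpha\in\mathbb R$, and between $\{\Im z>0\}$ and $\{\Im z<0\}$ when $\alpha\in i\mathbb R$. For~\ref{distr5} ($\alpha\in\mathbb R$, $s=0$) one has $\alpha^k=\alpha^2>0\not<0$, so in the first regime Theorem~\ref{th:meromorphic_pos_p} makes $z_1$ simple, strictly closest to the origin, and real of the same sign as $\alpha$ (so $\alpha z_1>0$); reflection in the real axis then forces $0<|z_1|<|z_2|\le|z_3|<\cdots$, together with $|z_{2n}|=|z_{2n+1}|\implies z_{2n}=\overline z_{2n+1}$ and $|z_{2n}|<|z_{2n+1}|\implies z_{2n},z_{2n+1}\in\mathbb R$ with $\Arg z_{2n}=\Arg z_{2n+1}\in\{0,\pi\}$ (Theorem~\ref{th:main2}\ref{item:th_main2_c}--\ref{item:th_main2_d}), while the alternation yields $\Re z_{2n-1}\Re z_{2n}<0$. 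For~\ref{distr6} ($\alpha\in i\mathbb R$, $s=1$) one has $\alpha^k=\alpha^2<0$, so Theorem~\ref{th:meromorphic_pos_p} is in its second regime: the two $\alpha$-points closest to the origin either share a modulus ($|z_1|=|z_2|$) or an argument (only possible when $p=1$); reflection in the imaginary axis then gives $0<|z_1|\le|z_2|<|z_3|\le\cdots$, $|z_{2n-1}|=|z_{2n}|\implies z_{2n-1}=-\overline z_{2n}$, $|z_{2n-1}|<|z_{2n}|\implies\Arg z_{2n-1}=\Arg z_{2n}\in\{-\pi/2,\pi/2\}$, and, via the alternation, $\Im z_{2n}\Im z_{2n+1}<0$ together with $\Im\alpha\,\Im z_1>0$.

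I expect the only real difficulty to be bookkeeping rather than anything conceptual: beyond elementary arithmetic modulo $2$ and the sector/reflection dictionary nothing new is needed. Two points require care. First, the parity accounting in the transition formulas must be kept consistent with the convention in $N=(z_k)$ that a double $\alpha$-point and a reflected pair each occupy two consecutive index slots --- this is exactly why the pairing starts at $z_2$ in~\ref{distr5} but at $z_1$ in~\ref{distr6}, and it is what makes the five ``for each $n$'' relations fall into place. Second, one must splice the ``closest $\alpha$-point'' data of Theorem~\ref{th:meromorphic_pos_p} --- which branches on whether $\alpha^k<0$ or $\alpha^k\not<0$ --- onto the reflection picture of Theorem~\ref{th:main2}, so that $z_1$, $z_2$ and the accompanying sign conditions (such as $\alpha z_1>0$, $\Im\alpha\,\Im z_1>0$, and the constraint on $j$ imposed by $|z_1|<|z_2|$) emerge with the signs asserted.
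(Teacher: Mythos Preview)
Your approach is exactly the paper's: the paper presents Theorem~\ref{th:ca2} as a direct ``summary'' of Theorems~\ref{th:main}, \ref{th:main2} and~\ref{th:meromorphic_pos_p} after noting that for $k=2$ and odd~$p$ the congruences reduce to $l\equiv 1+\varkappa+\sigma+m\pmod2$, $l\equiv m+1\pmod2$, and $m\equiv q\pmod2$, and your proposal carries out precisely this specialization (correctly invoking \ref{th:meromorphic_pos_p} rather than \ref{th:meromorphic_neg_p} since $p=2j+1>0$). Your bookkeeping on the sector/reflection dictionary and on splicing the ``closest $\alpha$-point'' data into the pairing structure is right; note only that your derivation gives $p=1$, i.e.\ $j=0$, as the condition permitting $|z_1|<|z_2|$ in~\ref{distr6}, which is what Theorem~\ref{th:meromorphic_pos_p} actually yields.
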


\begin{remark}\label{rem:akin_facts}
    Two last theorems have analogous statements if, instead of~$F(z)$
    satisfying~\eqref{eq:F_mth}, we take a function $G(z)$ of the form~\eqref{eq:G_mth}. Then,
    generally speaking, we cannot assert where the $\alpha$-point that is the least in absolute
    value occurs (it may be absent at all).
\end{remark}
\begin{remark}
Note that in all cases~\eqref{distr1}--\eqref{distr6} the $\alpha$-point split evenly among the
quadrants of complex plane. That is, if the $\alpha$-set of a function satisfies~\eqref{distr1}
or~\eqref{distr4}, then for any $r>0$ the number of $\alpha$-points in $\{z\in Q_k:|z|<r\}$ can
differ from the number of $\alpha$-points in $\{z\in Q_j:|z|<r\}$ at most by $1$ (here
$k,j=1,\dots,4$). The cases appearing in~\eqref{distr2}, \eqref{distr3}, \eqref{distr5}
and~\eqref{distr6} are the ``degenerated'' those of \eqref{distr1} and~\eqref{distr4} with
possible ingress of some $\alpha$-points onto the real or imaginary axes.
\end{remark}

Let us turn to zeros of entire functions by applying the idea of
Section~\ref{sec:entire-functions}. An entire function~$\relpenalty 100 H(z)=\sum_{n=0}^\infty f_{n} z^{n}$,
where $f_0\ne 0$, splits up into the even and odd parts according to
\begin{equation}\label{eq:H_even_odd}
    H(z)=f(z^2)+zg(-z^2),\quad\text{where}\quad
    f(z)=\sum_{n=0}^{\infty} f_{2n}z^n \an
    g(z)=\sum_{n=0}^{\infty} f_{2n+1}z^n.
\end{equation}
Since \(\displaystyle H(z)=0 \iff \frac{f(z^2)/f_0}{zg(z^2)/f_1} = -\frac {f_1}{f_0}\), zeros of
$H(z)$ are distributed as stated in Theorem~\ref{th:ca1} if both $f(z)/f_0$ and $g(z)/f_1$ have
only negative zeros and the genus equal to~$0$ up to factors of the form~$e^{cz}$, \ $c\ge 0$.
Similarly, zeros of $H(z)$ are distributed as stated in Theorem~\ref{th:ca2} provided that both
$f(z)/f_0$ and $g(z)/f_1$ have only positive zeros and the genus equal to~$0$ up to factors of
the form~$e^{-cz}$, \ $c\ge 0$.

A real entire function $\widetilde H(z) = f(z^2)+zg(z^2)$ is {\myem strongly stable} if
$f(z^2) + (1+\eta) z g(z^2)$ has no zeros in the closed right half of the complex plane for all
complex~$\eta$ which are small enough. This is the ``proper'' extension to entire functions of
the polynomial stability. The strongly stable functions include all stable polynomial and
functions which are close to them in some sense (for example, $e^x$); but they exclude such
functions like $e^{-x}$. For the case of complex functions and further details
see~\cite[p.~129]{Postnikov}. Note that~$\widetilde H(z)$ is strongly stable whenever
$\widetilde H(iz)$ belongs to the class~\textbf{HB} (named after Hermite and Biehler), which is
introduced in~\cite{ChebMei}.

\begin{remark}
    Chebotarev's Theorem (see~\cite{Chebotarev} or \emph{e.g.}~\cite{ChebMei,Postnikov}) implies
    that if~$\widetilde H(z)$ is strongly stable then $f(g)$ and~$g(z)$ have genus~$0$ up to
    factors~$e^{cz}$, $c>0$, and their zeros are negative, simple and interlacing. In
    particular,
    \emph{if $\widetilde H(z)$ is a strongly stable function, then zeros of the function~$H(z)$,
        which is defined by~\eqref{eq:H_even_odd}, are distributed as stated in
        Theorem~\ref{th:ca1}}. At that, the interlacing property of~$f(z)$ and~$g(z)$ remains
    redundant.
\end{remark}

Observe that if a complex number $\mu$ satisfies $\mu^4=-1$ (\textit{i.e.} $\mu$ is a
primitive $8$th root of unity), then we have the identity
\begin{multline*}
    % G(\overline\mu z)
    % \colonequals&
      \sum_{n=0}^\infty f_{n} \mu^{n(n-1)} (\mu^{-1} z)^{n}
      =\sum_{n=0}^\infty f_{n} \mu^{n(n-2)} z^{n}
      = \left(\sum_{n=0}^\infty f_{2n} \mu^{4n(n-1)} z^{2n}
          +\sum_{l=0}^\infty f_{2l+1} \mu^{4l^2-1} z^{2l+1}\right)\\%(2l+1)(2l+3)
      = \left(\sum_{n=0}^\infty f_{2n} 1^{\frac{n(n-1)}{2}} z^{2n}
          +\sum_{l=0}^\infty f_{2l+1} (-1)^{l^2}\mu^{-1} z^{2l+1}\right)
      =\sum_{n=0}^\infty f_{2n} z^{2n} +\overline\mu \sum_{l=0}^\infty (-1)^l f_{2l+1} z^{2l+1}.
 \end{multline*}
Consequently, the following fact is true.
\begin{corollary}\label{cr:ca2}
    Consider the functions
    \[
    h(z) = \sum_{k=0}^\infty i^{\frac{k(k-1)}2} f_{k} z^{k} \quad\text{and}\quad
    \overline{h}(z) = \sum_{k=0}^\infty i^{-\frac{k(k-1)}2} f_{k} z^{k},
    \]
    where $f(z)=\sum_{n=0}^{\infty} f_{2n}z^n$ and $g(z)=\sum_{n=0}^{\infty} f_{2n+1}z^n$ are
    entire functions of genus~$0$ and have only negative zeros. For $\mu=\sqrt{i}$, zeros of the
    function $h(\overline\mu z)$ distributed as claimed in Theorem~\ref{th:ca1} for
    $\alpha=\overline\mu f_1/f_0$ and zeros of the function $\overline h(\mu z)$ distributed as
    claimed in Theorem~\ref{th:ca1} for~$\alpha=\mu f_1/f_0$.
\end{corollary}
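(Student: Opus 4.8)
The plan is to deduce Corollary~\ref{cr:ca2} from Theorem~\ref{th:ca1} via the algebraic identity displayed just above the statement. Put $\mu\colonequals\sqrt i$, so that $\mu^{2}=i$, $\mu^{4}=-1$, $\mu^{-1}=\bar\mu$, and $\bar\mu$ is again a primitive eighth root of unity with $\bar\mu^{4}=-1$. Because $\tfrac{k(k-1)}2$ is always an integer, $i^{k(k-1)/2}=\mu^{k(k-1)}$, whence
\[
  h(\bar\mu z)=\sum_{k\ge0}f_{k}\,i^{k(k-1)/2}(\bar\mu z)^{k}=\sum_{k\ge0}f_{k}\,\mu^{k(k-1)}(\mu^{-1}z)^{k},
\]
and the computation preceding the corollary rewrites the right-hand side as $f(z^{2})+\bar\mu\,z\,g(-z^{2})$. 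Running that same computation with $\mu$ replaced by $\bar\mu$ (legitimate, since $\bar\mu^{4}=-1$ as well) gives $\bar h(\mu z)=f(z^{2})+\mu\,z\,g(-z^{2})$.

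Next I would read these two entire functions through the even/odd correspondence of Section~\ref{sec:concl_for_k_2}. The function $\mathcal H(z)\colonequals h(\bar\mu z)$ has even part $f(z^{2})$, with even-part generating function $f$, and odd part $\bar\mu\,z\,g(-z^{2})=z\,\tilde g(z^{2})$, where $\tilde g(w)\colonequals\bar\mu\,g(-w)$. Since the origin is not a negative zero we have $f(0)\ne0\ne g(0)$, so $\alpha\colonequals\bar\mu f_{1}/f_{0}$ is well defined; moreover $f/f(0)$ has genus $0$ with only negative zeros by hypothesis, while $\tilde g/\tilde g(0)$, being $w\mapsto g(-w)/g(0)$, has genus $0$ with only positive zeros. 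Consequently
\[
  F(z)\colonequals z^{-1}\,\frac{f(z^{2})/f(0)}{\tilde g(z^{2})/\tilde g(0)}
\]
is of the form~\eqref{eq:F_mth} with $k=2$ and $p=-1$ (so that $j=-1<0$), and $F\not\equiv z^{-1}$ unless $h$ is linear, i.e.\ unless both $f$ and $g$ are constant, which is the excluded trivial case. Writing $\alpha$ for the constant appearing in the statement one has the factorisation $\mathcal H(z)=\bigl(1-F(z)/\alpha\bigr)\,z\,\tilde g(z^{2})$; since $z\,\tilde g(z^{2})$ and $\mathcal H$ share no zero, the zero set of $\mathcal H$ coincides with the $\alpha$-set of $F$, multiplicities included. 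Theorem~\ref{th:ca1}, applied with $j=-1$, $k=2$ and this $\alpha$, then describes the zeros of $h(\bar\mu z)$, and the assertion for $\bar h(\mu z)$ follows verbatim upon interchanging $\mu\leftrightarrow\bar\mu$, which replaces $\alpha$ by $\mu f_{1}/f_{0}$.

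There is no analytic content here beyond Theorem~\ref{th:ca1}; the entire difficulty is phase bookkeeping. The points I would be most careful about are (i) that the unimodular twist $\bar\mu$ (resp.\ $\mu$) cancels when $\tilde g$ is normalised to $\tilde g/\tilde g(0)$, so that $F$ is \emph{genuinely} of the form~\eqref{eq:F_mth} and only the value of $\alpha$ carries the twist, and that the sign/branch of the square root $\mu$ is matched so that the factorisation $\mathcal H=(1-F/\alpha)\,z\,\tilde g(z^{2})$ holds with the advertised $\alpha$; (ii) the parity identities $\mu^{4}=-1$ and $\ell^{2}\equiv\ell\pmod 2$ underlying the displayed reduction of $h(\bar\mu z)$ to $f(z^{2})+\bar\mu z\,g(-z^{2})$; and (iii) the harmless observation that $h$ and $\bar h$ themselves need not have genus $0$ — this is irrelevant, since only the ratio $F$, a quotient of two genus-$0$ functions whose zeros have the required signs, must satisfy~\eqref{eq:F_mth}.
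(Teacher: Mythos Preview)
Your approach is exactly the paper's: the whole argument there is the displayed identity (valid for any $\mu$ with $\mu^{4}=-1$) followed by ``Consequently, the following fact is true'', leaning on the even/odd reduction of Section~\ref{sec:concl_for_k_2} that you have spelled out in full. One point you flagged under~(i) actually needs correcting: with your $F$ and $\alpha=\bar\mu f_{1}/f_{0}$ one computes $(1-F(z)/\alpha)\,z\,\tilde g(z^{2})=z\tilde g(z^{2})-f(z^{2})\ne\mathcal H(z)$; the template~\eqref{eq:Hi_and_Fi} gives $\alpha=-\tilde g(0)/f(0)=-\bar\mu f_{1}/f_{0}$, so either the printed statement carries a sign slip or the sign must be absorbed into the choice of branch of $\sqrt i$.
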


\section{Two problems by A. Sokal}\label{sec:two-problems-sokal}
\paragraph*{``Disturbed exponential'' function.\hspace{-.5em}}\setstretch{1.24}
Alan Sokal in his talk at Institut Henri Poincar\'e (see~\cite{Sokal}) put forward the
hypothesis that
\begin{conjecture}\label{conjecture-A}
    The entire function
    \begin{equation}\label{eq:0}
        \mathcal F(z;q)=\sum_{n=0}^\infty \frac {q^{\frac{n(n-1)}2}z^n}{n!},
    \end{equation} 
    where $q$ is a complex number, $0<|q|\leq 1$, can have only simple zeros.
\end{conjecture}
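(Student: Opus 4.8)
The plan is to establish Conjecture~\ref{conjecture-A} for purely imaginary~$q$, that is for~$q=\pm ir$ with~$0<r\le 1$, by feeding~$\mathcal F(z;\pm ir)$ into Corollary~\ref{cr:ca2}. Setting~$f_k\colonequals r^{k(k-1)/2}/k!\ge 0$ and using~$(\pm i)^{k(k-1)/2}=i^{\pm k(k-1)/2}$, one has
\[
\mathcal F(z;\pm ir)=\sum_{k=0}^\infty i^{\pm\frac{k(k-1)}2} f_k z^{k},
\]
which is precisely the function~$h$ (with the upper sign) or~$\overline h$ (with the lower sign) of Corollary~\ref{cr:ca2}, whose even- and odd-indexed coefficients assemble into
\[
f(z)=\sum_{n\ge0}\frac{r^{2n^2-n}}{(2n)!}\,z^n
\ww
g(z)=\sum_{n\ge0}\frac{r^{2n^2+n}}{(2n+1)!}\,z^n.
\]
Since~$f_1/f_0=1$, the constant appearing in Corollary~\ref{cr:ca2} is~$\alpha=\overline\mu$ (resp.~$\mu$) with~$\mu=\sqrt i=e^{i\pi/4}$, so~$\alpha^2=\mp i$ and~$\Im\alpha^2\ne 0$; hence the corollary places us in case~\eqref{distr1} of Theorem~\ref{th:ca1}, in which \emph{every} zero is simple. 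Thus~$\mathcal F(\overline\mu z;ir)$ (resp.~$\mathcal F(\mu z;-ir)$) has only simple zeros, and rescaling~$z$ transfers this to~$\mathcal F(z;\pm ir)$. The one thing left to verify is the hypothesis of Corollary~\ref{cr:ca2}: that~$f$ and~$g$ are entire of genus~$0$ with only negative zeros.

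Both~$f$ and~$g$ have order~$0$ when~$r<1$ and order~$\tfrac12$ when~$r=1$, so genus~$0$ is automatic and the whole content is membership in the Laguerre--P\'olya class with negative zeros. For~$r=1$ this is classical: $f(z)=\cosh\sqrt z$ and~$g(z)=\sinh\sqrt z/\sqrt z$, which lie in that class with zeros~$-\pi^2(k+\tfrac12)^2$ and~$-\pi^2 k^2$ ($k\ne 0$). For~$0<r<1$ I would deform~$r$ downward from~$1$. Writing~$a_n\colonequals r^{2n^2-n}/(2n)!$ and~$b_n\colonequals r^{2n^2+n}/(2n+1)!$ for the Maclaurin coefficients, one computes
\[
\frac{a_{n-1}a_{n+1}}{a_n^2}=r^4\cdot\frac{2n(2n-1)}{(2n+1)(2n+2)},
\qquad
\frac{b_{n-1}b_{n+1}}{b_n^2}=r^4\cdot\frac{2n(2n+1)}{(2n+2)(2n+3)},
\]
both strictly below~$1$ and increasing in~$r$. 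The classical log-concavity criterion of Hutchinson (each ratio~$\le\tfrac14$) already yields the Laguerre--P\'olya property for~$r\le 2^{-1/2}$, and one gains a little more by viewing~$f$ as the term-by-term product of~$\cosh\sqrt z$ with the partial theta function~$\Theta_0(rz;r^4)$ and invoking the Schur--Hadamard theorem over whatever range of~$r^4$ keeps~$\Theta_0(\,\cdot\,;r^4)$ itself Laguerre--P\'olya --- but neither route reaches~$r=1$. The \textbf{main obstacle} is exactly to carry the Laguerre--P\'olya property all the way up to~$r=1$: the factorial damping ought to make~$f$ and~$g$ far better behaved than the bare partial theta, and the natural mechanism is a homotopy in~$r$ --- one must show that, as~$r$ runs over~$(0,1]$, the negative zeros of~$f$ (and of~$g$) neither collide nor leave~$(-\infty,0)$, equivalently that~$f$ and~$f'$, respectively~$g$ and~$g'$, have no common zero on~$(-\infty,0)$ for any~$r\in(0,1]$.

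Once this is secured, Corollary~\ref{cr:ca2} together with case~\eqref{distr1} of Theorem~\ref{th:ca1} gives Conjecture~\ref{conjecture-A} for~$q=\pm ir$, $0<r\le 1$, in the strong form that the zeros are even simple \emph{and} pairwise distinct in absolute value, splitting evenly among the four quadrants. Two limitations of this route should be recorded. First, a general complex~$q$ is not reducible to the imaginary axis: $q^{n(n-1)/2}$ carries no scaling symmetry linking~$\mathcal F(z;q)$ with~$\mathcal F(z;|q|)$, so only the directions~$q\in i\mathbb R$ --- and, through the analogous untwisted use of Theorem~\ref{th:ca1}, $q\in\mathbb R$ --- fit the framework. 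Second, for real~$q$ the analogous reduction produces the \emph{real} constant~$\alpha=-f_1/f_0$, which lands in case~\eqref{distr2} of Theorem~\ref{th:ca1}, where a double real zero is a priori allowed; to exclude it one needs the additional input that~$\mathcal F(z;q)$ and~$\mathcal F'(z;q)=\mathcal F(qz;q)$ share no negative real zero, that is that~$\mathcal F(\,\cdot\,;q)$ has no two negative real zeros in the ratio~$q$.
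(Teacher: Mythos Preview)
Your overall strategy --- feed $\mathcal F(z;\pm ir)$ into Corollary~\ref{cr:ca2} and land in case~\eqref{distr1} of Theorem~\ref{th:ca1} --- is exactly the paper's, and like the paper you only treat purely imaginary~$q$ (the full conjecture remains open there too). The gap you yourself flag is real: your direct attacks (Hutchinson's $\tfrac14$-criterion, Schur--Hadamard against a partial theta, a homotopy in~$r$) do not push the Laguerre--P\'olya property of $f$ and~$g$ all the way to~$r=1$, and the deformation sketch would still require ruling out collisions of negative zeros of $f$ (and of $g$) for every~$r\in(0,1]$, which is no easier than what you are trying to prove.

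The paper closes this gap by a one-line trick you are missing. For \emph{real} $r\in(0,1]$ it is already known from the literature (Morris--Feldstein--Bowen; for $r=1$ the function is just $e^z$) that $\mathcal F(z;r)$ has only negative zeros. Writing $\mathcal F(z;r)=f(z^2)+z\,g(z^2)$ with \emph{the very same} $f$ and $g$ as in your proposal, this makes $\mathcal F(\cdot\,;r)$ a strongly stable entire function, and the Hermite--Biehler theorem then forces $f$ and $g$ to be of genus~$0$ with simple, negative, interlacing zeros. That single appeal verifies the hypothesis of Corollary~\ref{cr:ca2} uniformly for all~$r\in(0,1]$; no log-concavity estimates, Hadamard products, or continuity-in-$r$ arguments are needed. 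Once you insert this step, your argument becomes the paper's proof verbatim. Note also that this renders your closing worry about case~\eqref{distr2} moot: the paper never tries to recover the real-$q$ result from Theorem~\ref{th:ca1}, it \emph{uses} that result as input.
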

The stronger version of the conjecture claims that
\begin{conjecture}\label{conjecture-B}
    The function $\mathcal F(z;q)$ for $q\in\mathbb{C}$, \ $\ 0<|q|\leq 1$, can have only simple
    zeros with distinct absolute values.
\end{conjecture}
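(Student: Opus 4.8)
The plan is to route Conjecture~\ref{conjecture-B} through the even/odd decomposition~\eqref{eq:H_even_odd} and the transfer principle of Corollary~\ref{cr:ca2}. Writing $\mathcal F(z;q)=\sum_{n\ge 0}a_nz^n$ with $a_n=q^{n(n-1)/2}/n!$ from~\eqref{eq:0}, I split $\mathcal F(z;q)=f(z^2)+zg(z^2)$ with $f(w)=\sum_n a_{2n}w^n$ and $g(w)=\sum_n a_{2n+1}w^n$. For Theorem~\ref{th:ca1} (through Corollary~\ref{cr:ca2}) to apply one needs $f$ and $g$, after stripping a single global constant, to be genus-$0$ entire functions with only negative zeros, i.e.\ to generate P\'olya frequency sequences. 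This succeeds precisely when the quadratic twist in $a_n$ is $i^{\pm n(n-1)/2}$: for $q=\pm iq_0$ with $q_0\in(0,1]$ one has $a_n=i^{\pm n(n-1)/2}\,q_0^{n(n-1)/2}/n!$, and the real sequences obtained from $q_0^{n(n-1)/2}/n!$ by extracting even and odd indices are log-concave (indeed totally positive) for $q_0\le 1$, so Corollary~\ref{cr:ca2} yields simple zeros of pairwise distinct modulus. The case of real $q\in(0,1]$ is classical and even simpler, the zeros being real and negative.

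For a general complex $q=|q|e^{i\varphi}$ this reduction collapses. The even and odd coefficients then carry the twist $e^{i\varphi\,n(n-1)/2}$, whose phase depends on $n$ \emph{quadratically}; it is not a single global factor, so $f$ and $g$ acquire genuinely $n$-dependent phases and leave the admissible class of~\eqref{eq:F_mth}---even in the broadened sense of Remark~\ref{rem:zeros-entire-funct-extensions-plus}, which admits only one overall complex constant $C$. Thus the \Rc-function apparatus does not apply once $\varphi\notin\{0,\pm\pi/2\}$. The route I would take is a deformation in the modulus: fix the argument $\varphi$ and let $t=|q|$ grow from $0^+$ to the target $r\in(0,1]$. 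As $t\to 0^+$ one has $\mathcal F(z;q)=1+z+O(t)$ near the origin while the higher zeros recede, with the $n$-th zero of modulus $\sim n\,t^{-(n-1)}$; a Rouch\'e argument then gives, \emph{uniformly} in $\varphi$, that for small $t$ all zeros are simple and of pairwise distinct modulus. Both target properties are open conditions, so it suffices to show that the analytic trajectories $t\mapsto z_k(t)$ never meet the discriminant locus $\{\mathcal F=\mathcal F'=0\}$ nor a modulus-coincidence $|z_k|=|z_l|$, $k\ne l$.

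The hard part will be replacing the monotonicity that drives the whole paper. The engine of Lemma~\ref{lemma:simp_z} is that along each level curve $\Re\psi=\mathrm{const}$ of $\psi=\ln W$ the value $\Im\psi$ is strictly increasing in $|z|$, a consequence of $z(\ln W)'$ being an \Rc-function and hence of real coefficients. For generic $\varphi$ this monotonicity is false, so the argument-ordering used in Theorems~\ref{th:main}--\ref{th:main2} can no longer forbid equal-modulus pairs. A plausible substitute is an argument-principle bookkeeping: track the number of zeros of $\mathcal F(\cdot\,;q)$ inside each annulus $\{r_1<|z|<r_2\}$ and each quadrant, and show these counts are conserved along the deformation, so that a collision---which would force two zeros to merge in modulus or coalesce---is excluded by a counting obstruction. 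Making such a count effective for \emph{all} of $0<|q|\le 1$, and in particular controlling the degeneration as $|q|\to 1$ (where the zeros escape to infinity and their modulus-spacing shrinks), is exactly the gap that the present \Rc-function methods leave open and where a genuinely new estimate is required.
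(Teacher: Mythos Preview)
First, note that Conjecture~\ref{conjecture-B} is stated in the paper as a \emph{conjecture}: the paper does not claim to prove it for all complex~$q$ with $0<|q|\le 1$. What the paper does prove (in Section~\ref{sec:two-problems-sokal}) is the special case of purely imaginary~$q$; the real cases~$q\in(0,1]$ and~$q\in[-1,0)$ are attributed to earlier work. So your last two paragraphs, where you sketch a deformation-in-$|q|$ argument for general $\varphi=\Arg q$ and then admit it cannot be closed with the present tools, are not a gap relative to the paper---they simply go beyond what the paper establishes. The conjecture remains open for generic~$\varphi$, exactly as you conclude.

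For the purely imaginary case, your route through Corollary~\ref{cr:ca2} is the same as the paper's, but your justification that the even and odd parts~$f,g$ have only negative zeros is weaker than the paper's and, as written, not a proof. You assert that the sequences $(a_{2n})_n$ and $(a_{2n+1})_n$ with $a_n=q_0^{n(n-1)/2}/n!$ are ``log-concave (indeed totally positive)''. Log-concavity does \emph{not} imply total positivity, and extracting the even or odd index subsequence of a P\'olya frequency sequence need not again give a P\'olya frequency sequence; so this step requires an argument you have not supplied. The paper avoids this altogether: it uses the known fact that for $0<q_0\le 1$ the function $\mathcal F(z;q_0)$ has only negative zeros, hence is (strongly) stable, and then invokes the Hermite--Biehler theorem to conclude that~$f$ and~$g$ have negative, interlacing zeros. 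That is precisely the hypothesis needed in Corollary~\ref{cr:ca2}, and it yields the purely imaginary case of Conjecture~\ref{conjecture-B} in one line. If you want to keep your coefficient-based route, you must actually prove that~$f$ and~$g$ lie in the Laguerre--P\'olya class for $0<q_0\le 1$; the quickest way is the paper's Hermite--Biehler step, not a direct PF verification.
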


The following facts on~$\mathcal F(z;q)$ are known. The function $\mathcal F$ is the unique
solution to the following Cauchy problem
\begin{equation}\label{eq:1}
    \mathcal F'(z)=\mathcal F(qz),\quad \mathcal F(0)=1,
\end{equation}
which can be checked by substitution. Moreover, when $|q|=1$ this function has the exponential
type~$1$, for~$q$ lower in absolute value the function~$\mathcal F$ is of zero genus.

The case of positive~$q$ was studied extensively. It is known that the zeros of~$\mathcal F$ are
negative (see~\cite{Morris_et_al}), simple and satisfy Conjecture~\ref{conjecture-B} as well as
certain further conditions~\cite{Liu,Langley}. Conjecture~\ref{conjecture-B} holds true for
negative~$q$ as well, see~\emph{e.g.}~\cite{Dyachenko1}. The properties of~$\mathcal F(z;q)$ for
complex~$q$ were studied in~\cite{Alander,Valiron,Eremenko_Ostrovsky}. According
to~\cite{Sokal}, Conjecture~\ref{conjecture-B} is true if $|q|<1$ and the zeros
of~$\mathcal F(z;q)$ are big enough in absolute value (A.~Eremenko) as well as for small $|q|$.

Let us prove that Conjecture~\ref{conjecture-B} also holds true for purely imaginary values of
the parameter. As we pointed out, for positive~$q\le 1$ the
function~$\mathcal F(z;q)=f(z^2)+zg(z^2)$ has only negative zeros. In particular, it is stable.
The Hermite-Biehler theory (e.g.~\cite{ChebMei,Postnikov}) implies that the zeros of~$f(z)$
and~$g(z)$ are negative and interlacing. Therefore, by Corollary~\ref{cr:ca2} the zeros of
$\mathcal F(z;\pm iq)$ with~$0<q\le 1$ are simple and their absolute values are distinct.\qed

The family of polynomials
\begin{equation}\label{eq:01}
    P_N(z;q)=\sum_{n=0}^N \binom{N}{n}z^nq^{\frac{n(n-1)}{2}}
\end{equation} 
is connected tightly to the function $\mathcal F(z;q)$; it approximates this function in the sense that
\[
P_N\left(zN^{-1};q\right)
    = \sum_{n=0}^N \frac{q^{\frac{n(n-1)}{2}}z^n}{n!}
           \left(1-\frac{1}N\right)\cdot\left(1-\frac{2}N\right)
               \cdots\left(1-\frac{n-1}N\right) \xrightarrow{N\to\infty} \mathcal F(z;q).
\]
The polynomial version of this conjecture has the following form.
\begin{conjecture}\label{conjecture-C}
    For all $N>0$ the polynomial $P_N(z;q)$ where $|q|<1$, can have only simple roots, separated
    in absolute value by at least the factor $|q|$.
\end{conjecture}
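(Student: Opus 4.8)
The plan is to settle the conjecture in exactly the regimes the apparatus of Section~\ref{sec:concl_for_k_2} actually covers --- $q$ real or purely imaginary --- and to flag why the general complex case stays out of reach. For $q=\pm it$ with $0<t\le 1$ one writes
\[
P_N(z;\pm it)=\sum_{n=0}^N \binom Nn (\pm it)^{\frac{n(n-1)}2}z^n=\sum_{n=0}^N i^{\pm\frac{n(n-1)}2}f_n z^n,\ww f_n\colonequals\binom Nn t^{\frac{n(n-1)}2},
\]
which is precisely the input of Corollary~\ref{cr:ca2} with $h(z)=P_N(z;it)$ (resp.\ $\overline h(z)=P_N(z;-it)$). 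Here $f_0=1\ne0$, $f_1=N\ne0$, and the even and odd halves
\[
f(w)=\sum_{m}\binom N{2m}t^{m(2m-1)}w^m\an g(w)=\sum_{m}\binom N{2m+1}t^{m(2m+1)}w^m
\]
are polynomials (hence of genus $0$). So, granted that $f$ and $g$ have only negative zeros, Corollary~\ref{cr:ca2} together with Theorem~\ref{th:ca1} places the zeros of $P_N(\overline\mu z;it)$ and $P_N(\mu z;-it)$ as in case~\eqref{distr2}: simple, and pairwise distinct in absolute value. For $q$ real negative the analogous statement is already available from the treatment of $\mathcal F$ with negative parameter (\cite{Dyachenko1}); for $q$ real positive it reduces to the root structure of $P_N(z;q)$ discussed next.

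The key intermediate fact I would establish is: \emph{for $0<t<1$ the polynomial $P_N(z;t)$ has $N$ simple negative zeros $\beta_1<\dots<\beta_N<0$ with $\beta_{i+1}/\beta_i< t$} (and at $t=1$ it is $(1+z)^N$). By the Hermite--Biehler correspondence recalled in Sections~\ref{sec:introduction} and~\ref{sec:concl_for_k_2}, applied to $P_N(z;t)=f(z^2)+z\,g(z^2)$ with $f,g$ as above: negativity of the zeros of $P_N(\,\cdot\,;t)$ makes it Hurwitz stable, which forces $f$ and $g$ to have only negative zeros --- exactly the hypothesis needed in the previous paragraph. I would prove the fact by induction on $N$ from the recurrence
\[
P_N(z;q)=P_{N-1}(z;q)+z\,P_{N-1}(qz;q),
\]
immediate from $\binom Nn=\binom{N-1}n+\binom{N-1}{n-1}$. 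If $P_{N-1}(\,\cdot\,;t)$ has simple negative zeros $\beta_1<\dots<\beta_{N-1}$ with $\beta_{i+1}/\beta_i<t$, then $z\,P_{N-1}(tz;t)$ has zeros $0,\beta_1/t,\dots,\beta_{N-1}/t$, and the hypothesis $\beta_{i+1}/\beta_i<t$ is precisely what yields the strict interlacing $\beta_{i-1}<\beta_i/t<\beta_i$. Both summands have positive leading and lowest-order coefficients, so expanding $P_{N-1}(z;t)/\bigl(z\,P_{N-1}(tz;t)\bigr)$ into simple fractions (with positive residue $1$ at $0$ and positive residues at the $\beta_i/t$) shows $P_N(z;t)/\bigl(z\,P_{N-1}(tz;t)\bigr)$ decreases strictly from $+\infty$ to $-\infty$ across each of the $N$ bounded-or-unbounded gaps of the interlaced configuration, hence $P_N(\,\cdot\,;t)$ has exactly one simple zero in each gap; in particular all $N$ zeros are negative and simple.

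The main obstacle is propagating the \emph{quantitative} separation $\beta_{i+1}/\beta_i<t$ through this induction. Knowing only in which gap each new zero lies gives $|\gamma_{i+1}|/|\gamma_i|<1$ but not the sharper $<t$ (both the lower bound for $|\gamma_i|$ and the upper bound for $|\gamma_{i+1}|$ that the gap structure supplies involve the same boundary $|\beta_i|/t$). Recovering the factor-$t$ bound seems to require a strictly stronger invariant --- e.g.\ that the zeros of $P_N(z;t)$ interlace those of $P_N(tz;t)$, which is just the separation restated --- and I would try to track this by running the two recurrences $P_N(z;t)=P_{N-1}(z;t)+z\,P_{N-1}(tz;t)$ and $P_N(tz;t)=P_{N-1}(tz;t)+tz\,P_{N-1}(t^2z;t)$ jointly. (If one only wants simplicity and distinct absolute values, the qualitative interlacing already suffices via Corollary~\ref{cr:ca2} in the purely imaginary case.) The second, genuinely open, difficulty is the fully complex regime $0<|q|<1$ with $q\notin\mathbb R\cup i\mathbb R$: there $P_N(\,\cdot\,;q)$ no longer decomposes through an $\Rc$-function, so the present method does not apply and a new idea is needed.
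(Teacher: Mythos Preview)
Note first that the paper does not prove Conjecture~\ref{conjecture-C} in full either: it establishes only that for purely imaginary~$q$ the zeros of~$P_N(z;q)$ are simple and pairwise distinct in absolute value, explicitly ``without bounds on the ratio of subsequent (by the absolute value) roots''. Your proposal reaches the same partial conclusion and correctly flags both obstacles (the factor-$|q|$ separation and the generic complex~$q$) as open.

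Where the two arguments genuinely diverge is in the proof that~$P_N(z;t)$ has only negative zeros for~$0<t\le 1$, which is the hypothesis feeding Corollary~\ref{cr:ca2}. The paper dispatches this in one line: the sequence~$\big(t^{n(n-1)/2}\big)_{n\ge 0}$ is a multiplier sequence (it arises from~$e^{\frac12\ln t\cdot z(z-1)}$ with~$\ln t<0$), so acting with it on~$(1+z)^N$ preserves real-rootedness. Your route via the recurrence~$P_N(z;t)=P_{N-1}(z;t)+zP_{N-1}(tz;t)$ and interlacing is correct and self-contained, and it has the merit of pointing toward the quantitative separation (indeed, the invariant you propose --- interlacing of the zeros of~$P_N(z;t)$ with those of~$P_N(tz;t)$ --- is exactly equivalent to the factor-$t$ bound). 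The multiplier-sequence argument is shorter but gives no handle on the separation.

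One small correction: in your application of Corollary~\ref{cr:ca2} the relevant case of Theorem~\ref{th:ca1} is~\eqref{distr1}, not~\eqref{distr2}. With~$\mu=e^{i\pi/4}$ and~$f_1/f_0=N>0$ one has~$\alpha=\mu^{\pm 1}N$, hence~$\alpha^2=\pm iN^2$ and~$\Im\alpha^2\ne 0$. Case~\eqref{distr1} then gives directly that all zeros are simple with strictly increasing absolute values, which is what you want; case~\eqref{distr2} would allow~$|z_{2n-1}|=|z_{2n}|$.
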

\setstretch{1.2}
The original statement (which is equivalent to the given here) is concerned with the family of
polynomials $\left\{P_N\left(zw^{N-1};w^{-2}\right)\right\}_{N\in\mathbb{N}}$, where $w^{-2}=q$.
Observe that $\ref{conjecture-C}\implies \ref{conjecture-B}\implies \ref{conjecture-A}$.

The approach for~$\mathcal F(z;q)$ extends to the polynomials~$P_N(z;q)$ without changes. Their
zeros are negative for positive~$q$ provided that the polynomials coincide with the action of
the {\myem multiplier sequence}%
\footnote{See the definition and properties \emph{e.g.} in the book~\cite{Obreschkoff}. The
    sequence~$\big(q^{n(n-1)/2}\big)_{n=0}^{\infty}$ is formed by values of the
    function~$e^{\frac{1}{2}\ln |q|\cdot z(z-1)}$ at integer points; since~$\ln|q|<0$, it is a
    multiplier sequence according to Satz~10.1 from~\cite[p.~42]{Obreschkoff}.}%
~$\big(q^{n(n-1)/2}\big)_{n=0}^{\infty}$ on the polynomial~$(z+1)^N$. This justifies the
assertion of Conjecture~\ref{conjecture-C} for purely imaginary~$q$ without bounds on the ratio
of subsequent (by the absolute value) roots.

\paragraph*{Partial theta function.\hspace{-.5em}}
An analogous problem by A. Sokal appears in~\cite{KostovShapiro}. The partial theta function
\[
\Theta_0(z;q) = \sum_{n=0}^\infty q^{\frac{n(n-1)}2} z^{n}
\]
has only negative zeros whenever $0<q\le\widetilde q\approx 0.3092493386$, which is shown
in~\cite{KostovShapiro} (see also~\cite{Kostov}). Splitting it into the even part~$f(z^2)$ and
the odd part~$zg(z^2)$ gives
\begin{align*}
    \Theta_0(z;q)&=f(z^2)+zg(z^2),\\
    f(z)
    &=\sum_{n=0}^\infty q^{n(2n-1)} z^{n}
    =\sum_{n=0}^\infty q^{n(2n-2)} (qz)^{n}
    =\sum_{n=0}^\infty \left(q^4\right)^{\frac{n(n-1)}2} (qz)^{n} = \Theta_0(qz;q^4)
    \quad\text{and}\\
    g(z)
    &=\sum_{n=0}^\infty q^{n(2n+1)} z^{n}
    =\sum_{n=0}^\infty q^{n(2n-2)} (q^3z)^{n} = \Theta_0(q^3z;q^4).
\end{align*}
Thus, both~$f(z)$ and~$g(z)$ have only negative zeros whenever $0<q^4\le\widetilde q$. Therefore, by
Corollary~\ref{cr:ca2} all zeros of~$\Theta_0(z;iq)$ are simple and distinct in absolute value
if~$0<q^4\le\widetilde q$, that is if~$0<q\le q_*\approx 0.7457224107$. This is a partial
confirmation of the following assertion.
\begin{conjecture}[{see~\cite[p.~832]{KostovShapiro}}]
    Is it true that all zeros of~$\Theta_0(z;q)$ remain simple within the open
    disk~$|q| < \widetilde q$?
\end{conjecture}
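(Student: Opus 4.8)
The statement is an \emph{open} conjecture, and the results of this paper establish only special slices of it; what follows is therefore a strategy rather than a complete argument. The plan is to reduce simplicity throughout the disk $|q|<\widetilde q$ to the single claim that $\Theta_0(\,\cdot\,;q)$ and its derivative never vanish at the same finite point, and to attack that claim by combining the paper's machinery along a few privileged directions with a homotopy in $\arg q$ and an obstruction coming from the functional equation $\Theta_0(z;q)=1+z\hspace{.7pt}\Theta_0(zq;q)$.

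First I would extract the constraint that a double zero must satisfy. If $z_0\neq 0$ were a multiple zero of $\Theta_0(\,\cdot\,;q)$, then differentiating the functional equation gives $\Theta_0(qz_0;q)=-z_0^{-1}$ together with $\Theta_0'(qz_0;q)=q^{-1}z_0^{-2}$. Iterating the functional equation turns the first relation into a recursion for the orbit $a_n\colonequals\Theta_0(q^nz_0;q)$, namely $a_0=0$ and $a_{n+1}=(a_n-1)/(q^nz_0)$, which must satisfy $a_n\to\Theta_0(0;q)=1$ because $|q|<1$. Thus a double zero forces the forward orbit of $0$ under this nonautonomous map to converge to $1$ \emph{and} the extra derivative relation to hold simultaneously; the aim would be to show that these two requirements are jointly unsatisfiable whenever $|q|<\widetilde q$.

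For the base of the argument I would use the paper's conclusions along the directions $\arg q=0$ and $\arg q=\pm\tfrac\pi2$: along the positive real axis $\Theta_0(\,\cdot\,;q)$ has only negative, simple zeros for $q\le\widetilde q$, while for the two imaginary directions Corollary~\ref{cr:ca2} (via Theorem~\ref{th:ca1}) delivers not merely simple zeros but zeros with \emph{distinct absolute values}, and in fact for the whole radius since $q_*>\widetilde q$. Near $q=0$ the zeros are large and well separated, so a perturbation argument gives simplicity on a full neighbourhood of the origin. One then fixes $|q|=r<\widetilde q$, lets $\arg q$ run over $[0,2\pi)$, and tracks the zeros as continuous branches; since simplicity is an open condition, a double zero could arise only at a first value of $\arg q$ where two branches collide, and at such a value the orbit constraint of the previous paragraph is meant to produce a contradiction.

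The hard part is precisely this exclusion of collisions at \emph{generic} argument. The reason the \Rc-function apparatus of Sections~\ref{sec:main-theorems_PF}--\ref{sec:entire-functions} applies along the three privileged directions is that there the even/odd decomposition $\Theta_0=f(z^2)+zg(z^2)$ produces, after a rotation, parts $f$ and $g$ that again \emph{generate totally positive sequences} with only negative zeros, so that the relevant logarithmic derivative is an \Rc-function and Theorems~\ref{th:ca1}--\ref{th:ca2} apply. For intermediate $\arg q$ this total-positivity structure is destroyed---the coefficients $q^{n(n-1)/2}$ no longer line up into a P\'olya frequency pattern---so nothing in the present framework certifies the absence of a collision, and one is thrown back on a direct analysis of the orbit recursion above. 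Showing that this recursion admits no admissible configuration throughout the disk, rather than only for small $|q|$ (where the perturbative argument already works), is the genuine obstacle, and it is exactly the content that keeps the conjecture open.
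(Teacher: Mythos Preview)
You are right that this statement is not proved in the paper: it is recorded as an open conjecture, and the paper's only contribution toward it is the ``partial confirmation'' immediately preceding it, namely that the zeros of $\Theta_0(z;iq)$ are simple and distinct in absolute value for $0<q\le q_*=\widetilde q^{\,1/4}$. The paper obtains this by noting that the even and odd parts of $\Theta_0(z;q)$ are again partial theta functions, $\Theta_0(qz;q^4)$ and $\Theta_0(q^3z;q^4)$, so that for real positive $q$ with $q^4\le\widetilde q$ both have only negative zeros by the Kostov--Shapiro result, and then invoking Corollary~\ref{cr:ca2}. That is the full extent of what the paper does here; there is no argument for general $\arg q$, no homotopy, and no use of the functional equation beyond the introduction.

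Your proposed strategy is therefore not a variant of the paper's approach but an outline that goes well beyond it. The derivation of the orbit recursion $a_{n+1}=(a_n-1)/(q^nz_0)$ and the derivative constraint at a hypothetical double zero is correct, and your diagnosis of why the paper's machinery stops at the three rays is exactly right: for intermediate $\arg q$ the coefficients $q^{n(n-1)/2}$ no longer produce even and odd parts whose quotient fits the form~\eqref{eq:F_mth}, so none of Theorems~\ref{th:main}--\ref{th:meromorphic_neg_p} apply. The homotopy-in-$\arg q$ idea is reasonable as heuristics, but as you yourself note, excluding a first collision at generic argument requires an input that neither the \Rc-function framework nor the orbit recursion supplies, and that missing input is precisely the content of the conjecture. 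So your assessment---strategy, not proof, with the obstacle correctly located---matches the paper's stance.
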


With the help of exactly the same manipulations we could deduce that, for example, the (Jacobi)
theta function
\[
\Theta(z;iq) = \sum_{n=-\infty}^\infty (iq)^{\frac{n(n-1)}2} z^{n},\quad 0<q<1,
\]
also has its zeros simple, distinct in absolute value and residing in the quadrants of the
complex plane rotated by $\pi/4$ (according to the Remark~\ref{rem:akin_facts}). However, this
is redundant (although yet instructive) because the exact information is provided by the Jacobi
triple product formula (see \emph{e.g.}~\cite[Theorem~352]{HardyWright}) which is valid for any
complex $z\ne 0$ and~$|q|<1$:
\[
\sum_{n=-\infty}^\infty q^{\frac{n(n-1)}2}z^n =
\prod_{j=1}^{\infty} (1-q^{j}) (1+zq^{j-1}) \left(1+\frac {q^{j}}z\right).
\]

\section*{Acknowledgements}
The author is very grateful to the people who gave helpful comments concerning this study,
especially to the members (former and current) of his working group at the TU-Berlin. I also
thank the colleagues from Potsdam (Germany) and Ufa (Russian Federation) for useful discussions.
\vfill \eject

%\cohead{References}
{\setstretch{1}

}
\vspace{1em}
\noindent
\makebox[.295\textwidth][l]{\itshape Alexander Dyachenko}
\makebox[.7\textwidth][r]{\texttt{\href{mailto:diachenko@sfedu.ru}{diachenko@sfedu.ru},
        \href{mailto:dyachenk@math.tu-berlin.de}{dyachenk@math.tu-berlin.de}}}\\
\makebox[\textwidth][s]{\itshape TU-Berlin, Institut f\"ur Mathematik, Sekretariat MA 4-2,
    Straße des 17. Juni 136, 10623 Berlin, Germany}
\end{document}